\numberwithin{equation}{section}
\numberwithin{figure}{section}
\theoremstyle{plain}
\newtheorem{thm}{\protect\theoremname}[section]
\newenvironment{lyxlist}[1]
{\begin{list}{}
{\settowidth{\labelwidth}{#1}
 \setlength{\leftmargin}{\labelwidth}
 \addtolength{\leftmargin}{\labelsep}
 }}
{\end{list}}
  \theoremstyle{plain}
  \newtheorem{prop}[thm]{\protect\propositionname}
  \theoremstyle{remark}
  \newtheorem{rem}[thm]{\protect\remarkname}
  \theoremstyle{plain}
  \newtheorem{cor}[thm]{\protect\corollaryname}
  \providecommand{\corollaryname}{Corollary}
  \providecommand{\propositionname}{Proposition}
  \providecommand{\remarkname}{Remark}
\providecommand{\theoremname}{Theorem}
\journal{Journal of Differential Equations}
\begin{document}

\begin{frontmatter}



\title{Saddle-Node Bifurcation of Periodic Orbits for a Delay Differential Equation}

\author[SzG]{Szandra Guzsv\'any}
\ead{gszandra@math.u-szeged.hu}
\address[SzG]{Bolyai Institute, University of Szeged, 1 Aradi v. tere, Szeged, Hungary}

\author[GV]{Gabriella Vas\corref{cor1}}
\ead{vasg@math.u-szeged.hu}
\address[GV]{MTA-SZTE Analysis and Stochastics Research Group, Bolyai Institute, University of Szeged, 1 Aradi v. tere, Szeged, Hungary}
\cortext[cor1]{Corresponding author}

\begin{abstract}
We consider the scalar delay differential equation 
\[
\dot{x}(t)=-x(t)+f_{K}(x(t-1))
\]
with a nondecreasing feedback function $f_{K}$ depending on a parameter
$K$, and we verify that a saddle-node bifurcation of periodic orbits
takes place as $K$ varies. 

The nonlinearity $f_{K}$ is chosen so that it has two unstable fixed
points (hence the dynamical system has two unstable equilibria), and
these fixed points remain bounded away from each other as $K$ changes.
The generated periodic orbits are of large amplitude in the sense
that they oscillate about both unstable fixed points of $f_{K}$.
\end{abstract}

\begin{keyword}
Delay differential equation \sep Positive feedback \sep Saddle-node bifurcation \sep Large-amplitude periodic solution
\MSC 34K13 \sep 34K18 \sep 37G15
\end{keyword}

\end{frontmatter}


\section{Introduction}

Numerous scientific works have studied the existence and the bifurcation
of periodic orbits for delay differential equations, see the books
\cite{Diekmann,Erneux,HVL} and the survey paper \cite{Walther2}
of Walther. In paper \cite{Krisztin-1}, Krisztin gives a detailed
summary on the known results for equations of the special form 
\begin{equation}
\dot{x}(t)=-x(t)+f_{K}(x(t-1)),\label{Eq}
\end{equation}
where $f_{K}$ is monotone nonlinearity. Hopf-bifurcation is a widely
studied phenomenon \cite{Walther2}. A well-known example is due to
Krisztin, Walther and Wu: periodic orbits of \eqref{Eq} arise via
a series of Hopf-bifurcations for strictly monotone increasing nonlinearities,
e.g., for $f_{K}\left(x\right)=K\tanh(x)$ or for $f_{K}\left(x\right)=K\tan^{-1}(x)$
as $K$ increases, see \cite{Krisztin-Walther,Krisztin-Walther-Wu,Krisztin-Wu}.
Other types of bifurcations involving periodic orbits are rarely studied.
An interesting example is given by Walther in \cite{Walther}: he
studies a delay equation coming from a prize model, and he shows the
bifurcation of periodic orbits with small amplitudes and with periods
descending from infinity. To the best of the authors' knowledge, no
one has verified saddle-node bifurcation of periodic orbits neither
for \eqref{Eq} nor for other types of delay differential equations. 

In this paper we consider \eqref{Eq} in the so-called positive feedback
case; $f_{K}$ is supposed to be a nondecreasing continuous function
such that 
\[
f_{K}|_{(-\infty,-1-\varepsilon]}=-K,\quad f_{K}|_{[-1,1]}=0\quad\mbox{and}\quad f_{K}|_{[1+\varepsilon,\infty)}=K,
\]
where $\varepsilon$ is a fixed positive number and $K$ is the bifurcation
parameter. For technical simplicity, we define $f_{K}$ to be a piecewise
linear continuous function: 
\[
f_{K}(x)=\frac{K}{\varepsilon}\left(x+1\right)\quad\mbox{for }x\in(-1-\varepsilon,-1)
\]
and 
\[
f_{K}(x)=\frac{K}{\varepsilon}\left(x-1\right)\quad\mbox{for }x\in(1,1+\varepsilon),
\]
see Fig.~\ref{fig:fk}. 
\begin{figure}[b]
\centering{}\includegraphics[width=8cm]{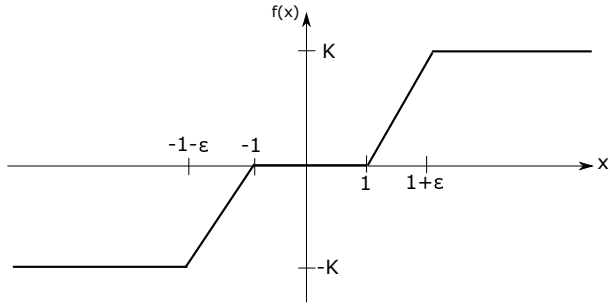} \caption{The plot of $f_{K}$.}
\label{fig:fk} 
\end{figure}

The results of the paper are expected to hold if the nondecreasing
continuous function $f_{K}$ is defined differently on $(-1-\varepsilon,-1)\cup(1,1+\varepsilon)$,
or if the coefficient of the linear term on the right hand side of
\eqref{Eq} is $-\mu$ with $\mu>0$.

The phase space for \eqref{Eq} is the Banach space $C=C\left(\left[-1,0\right],\mathbb{R}\right)$
with the maximum norm. If for some $t\in\mathbb{R}$, the interval
$\left[t-1,t\right]$ is in the domain of a continuous function $x$,
then the segment $x_{t}\in C$ is defined by $x_{t}\left(s\right)=x\left(t+s\right)$
for $-1\leq s\leq0$. 

If $K>1+\varepsilon$, then $f_{K}$ has two fixed points $\chi_{-}\in(-1-\varepsilon,-1)$
and $\chi_{+}\in(1,1+\varepsilon)$ with $f_{K}'\left(\chi_{-}\right)>1$
and $f_{K}'\left(\chi_{+}\right)>1$. Thus the constant elements 
\[
\left[-1,0\right]\ni s\mapsto\chi_{-}\in\mathbb{R}\mbox{ and }\left[-1,0\right]\ni s\mapsto\chi_{+}\in\mathbb{R}
\]
of $C$ are unstable equilibria. We know that there exist periodic
solutions oscillating about either $\chi_{-}$ or $\chi_{+}$ if $K$
is sufficiently large, and these periodic orbits appear via Hopf-bifurcations
\cite{Vas1}. 

We say that a periodic solution has large amplitude if it oscillates
about both $\chi_{-}$ and $\chi_{+}$. The corresponding orbit is
a large-amplitude periodic orbit. The existence of a pair of large-amplitude
periodic orbits has been first shown in \cite{Krisztin-Vas} for a
similar nonlinearity $f_{K}$ with $K$ large enough. More complicated
configurations of such periodic orbits has appeared in \cite{Vas2}.
A third work in this topic, the paper \cite{Krisztin-Vas_2} has described
the complicated geometric structure of the unstable set of a large-amplitude
periodic orbit in detail. These works have not explained how these
periodic orbits bifurcate as the parameter $K$ changes. Apparently
they cannot appear via Hopf bifurcation in a neighborhood of an unstable
equilibrium. In this paper we verify that for the nonlinearity $f_{K}$
defined above, large-amplitude periodic orbits arise via a saddle-node
bifurcation. The following theorem has already appeared in \cite{Krisztin-Vas}
as a conjecture. 
\begin{thm}
\label{main thm}(Saddle-node bifurcation of periodic orbits) For
all sufficiently small positive $\varepsilon$, one can give a threshold
parameter $K^{*}=K^{*}\left(\varepsilon\right)\in\left(6.5,7\right)$,
a large-amplitude periodic solution $p=p\left(\varepsilon\right)\colon\mathbb{R}\rightarrow\mathbb{R}$
of (\ref{Eq}) for the parameter $K=K^{*}$, an open neighborhood
$B=B(\varepsilon)$ of its initial segment $p_{0}$ in $C$ and a
constant $\delta=\delta(\varepsilon)>0$ such that

(i) if $K\in\left(K^{*}-\delta,K^{*}\right)$, then no periodic orbit
for (\ref{Eq}) has segments in $B$;

(ii) if $K=K^{*}$, then $\mathcal{O}=\left\{ p_{t}:t\in\mathbb{R}\right\} $
is the only periodic orbit with segments in $B$;

(iii) if $K\in\left(K^{*},K^{*}+\delta\right)$, then there are exactly
two periodic orbits with segments in $B$, and both of them are of
large-amplitude. 
\end{thm}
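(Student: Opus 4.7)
The plan is to exploit the piecewise linear structure of $f_{K}$ to reduce the search for large-amplitude periodic orbits of a prescribed combinatorial type to a single scalar equation $\Phi(\sigma,K)=0$, and then to verify that this equation undergoes a classical fold at some $K^{*}\in(6.5,7)$.

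First I would construct candidate large-amplitude periodic orbits explicitly. Since $f_{K}$ is constant on each of the three intervals $(-\infty,-1-\varepsilon]$, $[-1,1]$, $[1+\varepsilon,\infty)$ and linear on the two transition bands, on every time interval on which $x(t-1)$ stays in a single one of these five pieces, (\ref{Eq}) is a linear inhomogeneous ODE solvable in closed form by variation of constants. A large-amplitude periodic orbit is therefore encoded by the finite ordered list of times at which $x(t-1)$ crosses $\pm 1$ or $\pm(1+\varepsilon)$, together with one reference value. Exploiting the odd symmetry of $f_{K}$ and the associated $x\mapsto -x$ equivariance of (\ref{Eq}), I would restrict attention to orbits with the symmetry $p(t+T/2)=-p(t)$, which halves the number of unknowns.

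Fixing the combinatorial pattern expected for the bifurcating orbit, namely one excursion above $1+\varepsilon$ and a symmetric one below $-(1+\varepsilon)$ per half period, the matching, periodicity and symmetry conditions yield a finite nonlinear system in the transition times and the half period. Using the explicit exponentials I would algebraically eliminate all but one unknown $\sigma$ (for instance the excursion time above $1+\varepsilon$), arriving at a single equation $\Phi(\sigma,K)=0$ whose zeros are in bijection with the periodic orbits of the chosen type. The saddle-node statement translates to verifying $\Phi(\sigma^{*},K^{*})=0$, $\partial_{\sigma}\Phi(\sigma^{*},K^{*})=0$, $\partial^{2}_{\sigma\sigma}\Phi(\sigma^{*},K^{*})\neq 0$ and $\partial_{K}\Phi(\sigma^{*},K^{*})\neq 0$ for some $K^{*}\in(6.5,7)$; the implicit function theorem then delivers a smooth branch of zeros folded in $K$, producing no zeros for $K<K^{*}$, exactly one at $K=K^{*}$ and two for $K>K^{*}$ near $\sigma^{*}$, which lift to honest periodic orbits of (\ref{Eq}) by the explicit construction. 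To upgrade this to the claim about the whole neighborhood $B\subset C$ of $p_{0}$, I would show that any periodic solution whose segments lie in a sufficiently small $C$-ball around $p_{0}$ must realize exactly the same crossing pattern as $p$; this is a continuity argument in the uniform topology exploiting that the transition bands have width only $\varepsilon$, so small $C$-perturbations cannot alter the signs of $x(t-1)\mp 1$ and $x(t-1)\mp(1+\varepsilon)$ at the threshold crossings.

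The hardest step will be the transversality analysis at $(\sigma^{*},K^{*})$ together with pinning down the root in the narrow window $(6.5,7)$. The function $\Phi$ is built from compositions of exponentials whose arguments are themselves implicitly defined by transcendental equations in $K$, so direct analytic sign-checking of the second and cross derivatives is awkward. The natural route is to pass first to the limit $\varepsilon\to 0^{+}$, in which $f_{K}$ degenerates to a step function with values $\{-K,0,K\}$ and the implicit transition-time equations decouple into closed-form expressions; after establishing the fold conditions and the location $K^{*}\in(6.5,7)$ for the limiting map $\Phi_{0}$ (with the aid of monotonicity estimates and explicit numerical bracketing at $K=6.5$ and $K=7$), the full statement for small $\varepsilon>0$ follows by a standard perturbation / implicit function argument. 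A secondary, but more routine, difficulty is excluding all other combinatorial types of periodic orbits from the neighborhood $B$, which requires only a qualitative description of segments $C$-close to $p_{0}$.
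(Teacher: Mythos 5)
Your overall architecture coincides with the paper's: both reduce the search for a symmetric large-amplitude periodic orbit of a fixed crossing pattern to a single scalar equation in one transition time (the paper's $F(L_{2},K,\varepsilon)=L_{2}$ is your $\Phi(\sigma,K)=0$), both verify the classical fold conditions on that scalar equation with the root $K^{*}$ bracketed in $(6.5,7)$ via the limiting relation \eqref{ke}, and both then exclude other combinatorial types from a small $C$-neighborhood of $p_{0}$.

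However, the step you single out as the natural route for the hard transversality analysis --- ``establish the fold conditions \dots for the limiting map $\Phi_{0}$'' at $\varepsilon=0$ and then perturb by a ``standard perturbation / implicit function argument'' --- would fail as stated, because the limit $\varepsilon\rightarrow0^{+}$ is singular for the fold. The bifurcating fixed point $L_{2}^{*}$ lies in the window $(0,\widehat{L}_{2})$ with $\widehat{L}_{2}=\frac{K-4}{2(K-1)}\varepsilon+O(\varepsilon^{2})$, so the two fixed points collide into $\sigma=0$ as $\varepsilon\rightarrow0^{+}$, and the quadratic coefficient blows up: $\partial^{2}F/\partial L_{2}^{2}\sim-K^{2}/((K+1)\varepsilon)$ (Proposition \ref{prop:L2L2 szerinti derivalt}). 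In the limit there is no residual $\sigma$-variable at all --- one is left with the single equation \eqref{limit of F} in $K$ alone --- hence no nondegenerate limiting fold to which a regular implicit-function perturbation could be applied; one would at least need an $\varepsilon$-dependent rescaling $\sigma=\varepsilon s$, which you do not propose. The paper avoids this by working at fixed small $\varepsilon$: it first solves $F_{\varepsilon}(L_{2},K)=L_{2}$ for $K=\varphi_{\varepsilon}(L_{2})$ by a uniform contraction (Proposition \ref{prop:ift}), then locates $L_{2}^{*}$ by the intermediate value theorem applied to $\partial F_{\varepsilon}/\partial L_{2}$ along this curve, using the asymptotics only to show that this derivative exceeds $1$ at $L_{2}=0$ and is below $1$ at $L_{2}=\widehat{L}_{2}$ (Corollary \ref{cor:L2derivaltjta}) and to fix the signs of $\partial F/\partial K$ and $\partial^{2}F/\partial L_{2}^{2}$. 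You would need to replace your regular-perturbation step by an argument of this kind. A lesser remark: the exclusion of other periodic orbits near $p_{0}$ is not quite as routine as you suggest --- the paper needs the monotonicity and symmetry properties of arbitrary periodic solutions (Proposition \ref{prop:monotonicity property}) and a Poincar\'e-section reduction to normalize the time shift before the crossing-pattern comparison can be made --- but your continuity idea is the right one there.
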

Let $K_{0}$ be that solution of equation 
\begin{equation}
\left(K-1\right)\left(K+1\right)^{3}=e\left(K^{2}-2K-1\right)^{2}\label{ke}
\end{equation}
that belongs to the interval $\left(6.5,7\right)$. It is easy to
show that $K_{0}$ is unique, see Section 3 of \cite{Krisztin-Vas}.
Numerical computation shows that $K_{0}\approx6.87$. We will see
that the limit of the bifurcation parameter $K^{*}(\varepsilon)$
is $K_{0}$ as $\varepsilon\rightarrow0^{+}$.

The proof is organized as follows. Let $\varepsilon\in\left(0,1\right)$.
In Section 2 we introduce a one-dimensional map $F$ depending also
on parameters $K$ and $\varepsilon$. In Section 3 we show that the
fixed points of $F(\cdot,K,\varepsilon)$ determine large-amplitude
periodic solutions for equation (\ref{Eq}). Then we show in Section
4 that $F$ undergoes a saddle-node bifurcation as $K$ varies if
$\varepsilon$ is a fixed and sufficiently small positive number.
We also need to show that -- locally -- all periodic solutions can
be obtained as fixed points of $F(\cdot,K,\varepsilon)$. This is
done Section 5. Theorem \ref{main thm} immediately follows from these
results, see Section 6. The Appendix contains certain lengthy but
straightforward calculations used in Section 4.

In the saddle-node bifurcation of $F$, a neutral fixed point splits
into two fixed points, one attracting and one repelling. This does
not imply that we have one stable and one unstable periodic orbit
for $K>K^{*}$. We know that if $f_{K}$ is a $C^{1}$-function with
nonnegative derivative, then all periodic orbits 
are unstable, see e.g., Proposition 7.1 in \cite{Vas2}. Hence we
presume that the periodic orbits given by the above theorem are also
unstable. 

In the previous paper \cite{Krisztin-Vas}, we obtained large-amplitude
periodic solutions also as fixed points of finite dimensional maps.
We emphasize that here we construct $F$ in a different way. The current
approach is simpler because it yields shorter calculations. The advantage
of the construction used in \cite{Krisztin-Vas} is the following: the eigenvalues
of the derivatives of the finite dimensional maps in \cite{Krisztin-Vas} at the fixed points
coincide with the Floquet multipliers of the corresponding periodic
orbits. Hence those finite dimensional maps 
give precise information on the stability properties of the periodic
orbits. This is not true here. 

The reader may find other examples,
in which the existence of a periodic orbit is shown by handling a
finite dimensional fixed point problem, e.g., in the papers \cite{Ivanov-Losson,Lani-Wayda,Stoffer}.

\section{The map $F$}

Let $\varepsilon\in(0,1)$ and $K\in\left(6.5,7\right)$. In this
section we define a periodic function $p$ as the concatenation of
certain auxiliary functions $y_{1},y_{2},...,y_{10}$ such that if
$p$ is a solution of the delay equation \eqref{Eq}, then $y_{1},y_{2},...,y_{10}$
satisfy a system of ordinary differential equations with boundary
conditions. Then we reduce this ODE system to a single fixed point
equation of the form $F\left(L_{2},K,\varepsilon\right)=L_{2}$, where
$L_{2}$ is a parameter corresponding to $p$. 

Assume that 
\begin{lyxlist}{00.00.0000}
\item [{(H1)}] $L_{i}>0$ for $i\in\{1,2,...,5\}$, 
\item [{(H2)}] $2L_{1}+5L_{2}+5L_{3}+3L_{4}+3L_{5}=1,$
\item [{(H3)}] $\theta_{i}>1+\varepsilon$ for $i\in\{1,2,3,4\}$, and
$\theta_{i}\in(1,1+\varepsilon)$ for $i\in\{5,6\}$.
\end{lyxlist}
Consider the subsequent continuous functions: 
\begin{lyxlist}{00.00.0000}
\item [{(H4)}] $y_{1}\in C([0,L_{1}],\mathbb{R})$ with $y_{1}(0)=1+\varepsilon$
and $y_{1}(L_{1})=\theta_{1}$,\\
 $y_{2}\in C([0,L_{2}],\mathbb{R})$ with $y_{2}(0)=\theta_{1}$ and
$y_{2}(L_{2})=\theta_{2}$,\\
 $y_{3}\in C([0,L_{3}],\mathbb{R})$ with $y_{3}(0)=\theta_{2}$ and
$y_{3}(L_{3})=\theta_{3}$,\\
 $y_{4}\in C([0,L_{4}],\mathbb{R})$ with $y_{4}(0)=\theta_{3}$ and
$y_{4}(L_{4})=\theta_{4}$,\\
 $y_{5}\in C([0,L_{5}],\mathbb{R})$ with $y_{5}(0)=\theta_{4}$ and
$y_{5}(L_{5})=1+\varepsilon$,\\
 $y_{6}\in C([0,L_{2}],\mathbb{R})$ with $y_{6}(0)=1+\varepsilon$
and $y_{6}(L_{2})=\theta_{5}$,\\
 $y_{7}\in C([0,L_{3}],\mathbb{R})$ with $y_{7}(0)=\theta_{5}$ and
$y_{7}(L_{3})=\theta_{6}$,\\
 $y_{8}\in C([0,L_{4}],\mathbb{R})$ with $y_{8}(0)=\theta_{6}$ and
$y_{8}(L_{4})=1$,\\
 $y_{9}\in C([0,L_{2}+L_{5}],\mathbb{R})$ with $y_{9}(0)=1$ and
$y_{9}(L_{2}+L_{5})=-1$,\\
 $y_{10}\in C([0,L_{3}],\mathbb{R})$ with $y_{10}(0)=-1$ and $y_{10}(L_{3})=-1-\varepsilon$,
\item [{(H5)}] if $i\in\{1,2,...,5\}$, then $y_{i}(s)>1+\varepsilon$
for all $s$ in the interior of the domain of $y_{i},$\\
if $i\in\{6,7,8\}$, then $y_{i}(s)\in(1,1+\varepsilon)$ for all
$s$ in the interior of the domain of $y_{i}$,\\
$y_{9}(s)\in(-1,1)$ for all $s\in(0,L_{2}+L_{5})$,\\
$y_{10}(s)\in(-1-\varepsilon,-1)$ for all $s\in(0,L_{3})$. 
\end{lyxlist}
Fig.~\ref{fig:periodic function} plots certain horizontal translations
of $y_{1},...,y_{10}.$

\begin{figure}[b]
\centering{}\centering \includegraphics[angle=90,width=0.75\textwidth]{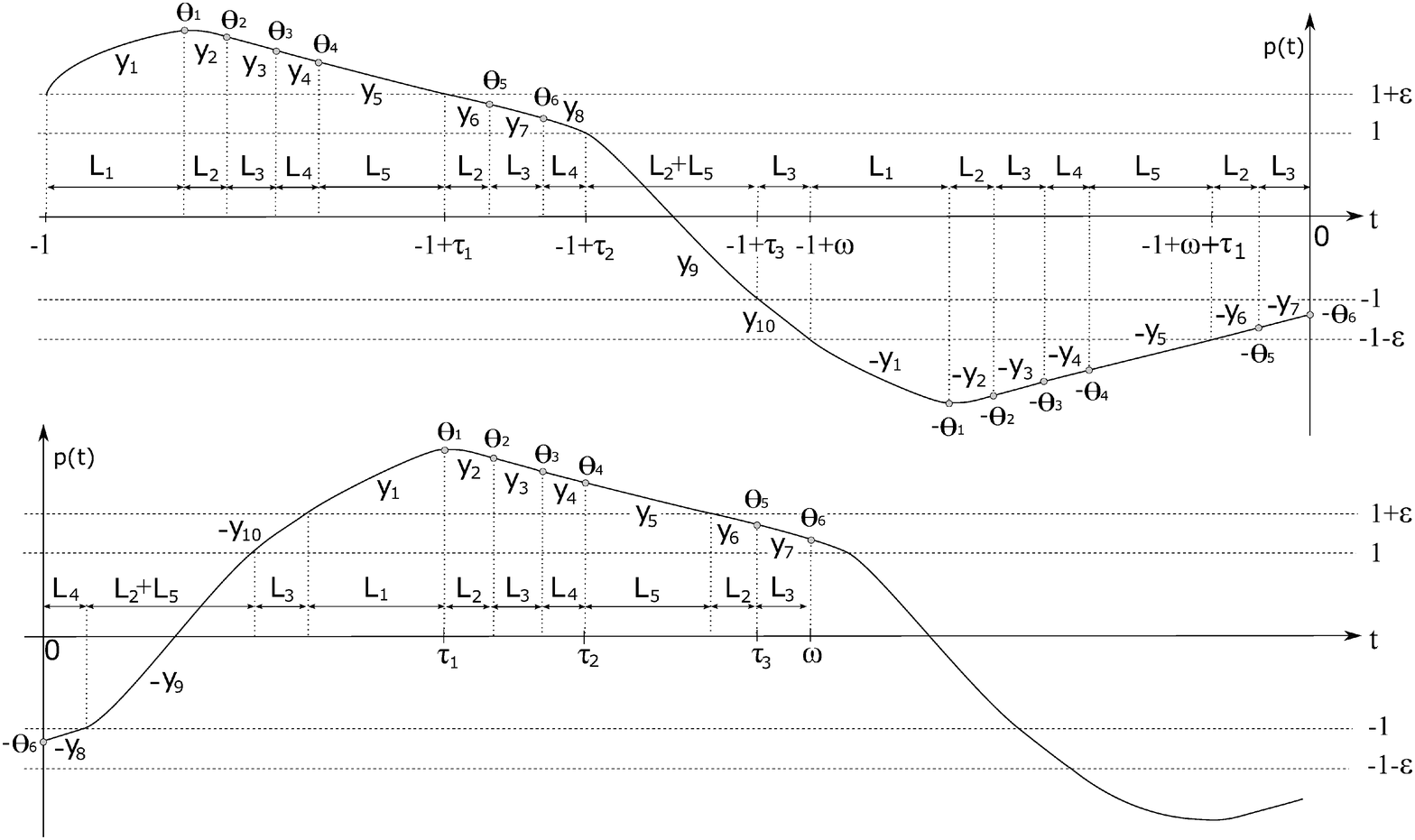}
\caption{The plot of p on {[}-1,0{]} and on {[}0,1{]}.}
\label{fig:periodic function} 
\end{figure}

Set $0<\tau_{1}<\tau_{2}<\tau_{3}<\omega<1$ as 
\begin{align}
\tau_{1} & =\sum_{i=1}^{5}{L_{i}},\nonumber \\
\tau_{2} & =\tau_{1}+L_{2}+L_{3}+L_{4},\label{tau  omega}\\
\tau_{3} & =\tau_{2}+L_{2}+L_{5},\nonumber \\
\omega & =\tau_{3}+L_{3}.\nonumber 
\end{align}
Introduce a $2\omega$-periodic function $p:\mathbb{R}\rightarrow\mathbb{R}$
as follows. Set $p$ on $[-1,-1+\omega]$ such that 
\begin{align}
p(t-1) & =y_{1}(t)\quad\mbox{for }t\in[0,L_{1}],\notag\label{p:egyenletek}\\
p(t-1+L_{1}) & =y_{2}(t)\quad\mbox{for }t\in[0,L_{2}],\notag\\
p(t-1+L_{1}+L_{2}) & =y_{3}(t)\quad\mbox{for }t\in[0,L_{3}],\notag\\
p(t-1+L_{1}+L_{2}+L_{3}) & =y_{4}(t)\quad\mbox{for }t\in[0,L_{4}],\notag\\
p(t-1+L_{1}+L_{2}+L_{3}+L_{4}) & =y_{5}(t)\quad\mbox{for }t\in[0,L_{5}],\tag{P.1}\\
p(t-1+\tau_{1}) & =y_{6}(t)\quad\mbox{for }t\in[0,L_{2}],\notag\\
p(t-1+\tau_{1}+L_{2}) & =y_{7}(t)\quad\mbox{for }t\in[0,L_{3}],\notag\\
p(t-1+\tau_{1}+L_{2}+L_{3}) & =y_{8}(t)\quad\mbox{for }t\in[0,L_{4}],\notag\\
p(t-1+\tau_{2}) & =y_{9}(t)\quad\mbox{for }t\in[0,L_{2}+L_{5}],\notag\\
p(t-1+\tau_{3}) & =y_{10}(t)\quad\mbox{for }t\in[0,L_{3}].\notag
\end{align}

Let 
\begin{equation}
p(t)=-p(t-\omega)\quad\mbox{for all }t\in[-1+\omega,-1+2\omega].\tag{P.2}\label{p:periodikus}
\end{equation}

Then extend $p$ to the real line $2\omega$-periodically. See Fig.\ \ref{fig:periodic function}
for the plot of $p$ on $\left[-1,1\right]$. It is clear that $p$
is of large amplitude. 

Our first goal is to find what conditions hold for $L_{1},...,L_{5},\theta_{1},...,\theta_{6}$
and $y_{1},...,y_{10}$ if $p$ satisfies equation (\ref{Eq}) for
all $t\in\mathbb{R}$. As $p(t)=-p(t-\omega)$ for all real $t$ and
$f_{K}$ is odd, we do not lose information if we restrict our examinations
to the interval $[0,\omega]$. So consider 
\begin{equation}
\dot{p}(t)=-p(t)+f_{K}(p(t-1))\quad\mbox{for }t\in[0,\omega].\label{eq:1}
\end{equation}
We study \eqref{eq:1} first on the interval $[0,\tau_{1}]$, then
on $[\tau_{1},\tau_{2}]$, $[\tau_{2},\tau_{3}]$ and $[\tau_{3},\omega]$.

\emph{1.} \emph{The interval $[0,\tau_{1}].$ }The way we extended
$p$ from $[-1,-1+\omega]$ to $\mathbb{R}$ and condition (H2) together
imply that 
\begin{align}
p(t) & =-y_{8}(t)\quad\mbox{for }t\in[0,L_{4}],\notag\nonumber \\
p(t+L_{4}) & =-y_{9}(t)\quad\mbox{for }t\in[0,L_{2}+L_{5}],\label{p on (0,tau1)}\\
p(t+L_{4}+L_{2}+L_{5}) & =-y_{10}(t)\quad\mbox{for }t\in[0,L_{3}],\notag\nonumber \\
p(t+L_{4}+L_{2}+L_{5}+L_{3}) & =y_{1}(t)\quad\mbox{for }t\in[0,L_{1}],\notag\nonumber 
\end{align}
see Fig.~\ref{fig:periodic function}. Also observe -- using \eqref{p:egyenletek} and
(H3)-(H5) -- that 
\[
p(t)\geq1+\varepsilon\quad\mbox{for }t\in[-1,-1+\tau_{1}],
\]
and thus (\ref{eq:1}) is in the form $\dot{p}\left(t\right)=-p\left(t\right)+K$
on $\left[0,\tau_{1}\right]$. We conclude that (\ref{eq:1}) holds
for $t\in[0,\tau_{1}]$ if and only if the subsequent four equations
are satisfied:
\begin{align}
\dot{y}_{8}(t)= & -y_{8}(t)-K,\quad t\in[0,L_{4}],\label{eq:y8}\\
\dot{y}_{9}(t)= & -y_{9}(t)-K,\quad t\in[0,L_{2}+L_{5}],\label{eq:y9}\\
\dot{y}_{10}(t)= & -y_{10}(t)-K,\quad t\in[0,L_{3}],\label{eq:y10}\\
\dot{y}_{1}(t)= & -y_{1}(t)+K,\quad t\in[0,L_{1}].\label{eq:y1}
\end{align}

\emph{2.} \emph{The interval $[\tau_{1},\tau_{2}].$ }By the definition
of $p$ and hypothesis (H2), 
\begin{align*}
p(t+\tau_{1}) & =y_{2}(t)\quad\mbox{for }t\in[0,L_{2}],\\
p(t+\tau_{1}+L_{2}) & =y_{3}(t)\quad\mbox{for }t\in[0,L_{3}]
\end{align*}
and 
\[
p(t+\tau_{1}+L_{2}+L_{3})=y_{4}(t)\quad\mbox{for }t\in[0,L_{4}].
\]
We also know from (\ref{p:egyenletek}) that 
\begin{align*}
p(t-1+\tau_{1}) & =y_{6}(t)\quad\mbox{for }t\in[0,L_{2}],\\
p(t-1+\tau_{1}+L_{2}) & =y_{7}(t)\quad\mbox{for }t\in[0,L_{3}],\\
p(t-1+\tau_{1}+L_{2}+L_{3}) & =y_{8}(t)\quad\mbox{for }t\in[0,L_{4}].
\end{align*}
Hypotheses (H3)-(H5) then guarantee that 
\[
p(t)\in[1,1+\varepsilon]\quad\mbox{for }t\in[-1+\tau_{1},-1+\tau_{2}].
\]
Using the definition of $f_{K}$, we obtain that (\ref{eq:1}) holds
on $[\tau_{1},\tau_{2}]$ if and only if
\begin{align}
\dot{y}_{2}(t)= & -y_{2}(t)+\frac{K}{\varepsilon}(y_{6}(t)-1)\quad\mbox{for }t\in[0,L_{2}],\label{eq:y2}\\
\dot{y}_{3}(t)= & -y_{3}(t)+\frac{K}{\varepsilon}(y_{7}(t)-1)\quad\mbox{for }t\in[0,L_{3}]\label{eq:y3}
\end{align}
and 
\begin{equation}
\dot{y}_{4}(t)=-y_{4}(t)+\frac{K}{\varepsilon}(y_{8}(t)-1)\quad\mbox{for }t\in[0,L_{4}].\label{eq:y4}
\end{equation}

\emph{3. The interval $[\tau_{2},\tau_{3}].$} Next observe that 
\begin{align*}
p(t+\tau_{2}) & =y_{5}(t)\quad\mbox{for }t\in[0,L_{5}],\\
p(t+\tau_{2}+L_{5}) & =y_{6}(t)\quad\mbox{for }t\in[0,L_{2}]
\end{align*}
and 
\[
p(t)\in[-1,1]\quad\mbox{for }t\in[-1+\tau_{2},-1+\tau_{3}].
\]
Therefore 
\begin{equation}
\dot{y}_{5}(t)=-y_{5}(t)\quad\mbox{for }t\in[0,L_{5}]\label{eq:y5}
\end{equation}
and 
\begin{equation}
\dot{y}_{6}(t)=-y_{6}(t)\quad\mbox{for }t\in[0,L_{2}].\label{eq:y6}
\end{equation}

\emph{4. The interval $[\tau_{3},\omega].$} At least observe that
\begin{align*}
p(t+\tau_{3}) & =y_{7}(t)\quad\mbox{for }t\in[0,L_{3}],\\
p(t-1+\tau_{3}) & =y_{10}(t)\quad\mbox{for }t\in[0,L_{3}],
\end{align*}
 and 
\[
p(t)\in[-1-\varepsilon,-1]\quad\mbox{for }t\in[-1+\tau_{3},-1+\omega].
\]
So on the interval $[\tau_{3},\omega]$, equation (\ref{eq:1}) is
equivalent to 
\begin{equation}
\dot{y}_{7}(t)=-y_{7}(t)+\frac{K}{\varepsilon}(y_{10}(t)+1),\quad t\in[0,L_{3}].\label{eq:y7}
\end{equation}

We see that under hypotheses (H1)-(H5), equation (\ref{eq:1}) is equivalent to a system
of linear ordinary differential equations. It worth solving equations
(\ref{eq:y8})-(\ref{eq:y1}) and (\ref{eq:y5})-(\ref{eq:y6}) first
because they are independent from the other ones. Then we can solve
(\ref{eq:y2}), (\ref{eq:y4}) and (\ref{eq:y7}) using the solutions
of (\ref{eq:y6}), (\ref{eq:y8}) and (\ref{eq:y10}), respectively.
At last, using the solution of (\ref{eq:y7}), we can find the solution
of (\ref{eq:y3}). Applying the boundary conditions given by (H4)
for $t=0$, we obtain that 
\begin{align}
y_{1}(t) & =K-(K-1-\varepsilon)e^{-t},\ t\in[0,L_{1}],\tag{Y.1}\label{eqsol:y1}\\
y_{2}(t) & =\theta_{1}e^{-t}+\frac{K}{\varepsilon}\left((1+\varepsilon)te^{-t}+e^{-t}-1\right),\ t\in[0,L_{2}],\tag{Y.2}\label{eqsol:y2}\\
y_{3}(t) & =\theta_{2}e^{-t}+\frac{K}{\varepsilon}\left((\theta_{5}t+1)e^{-t}-1\right)\quad\tag{Y.3}\label{eqsol:y3}\\
 & \quad-\frac{K^{2}}{\varepsilon^{2}}(K-1)\left(1-\left(1+t+\frac{t^{2}}{2}\right)e^{-t}\right),\ t\in[0,L_{3}],\notag\\
y_{4}(t) & =\theta_{3}e^{-t}+\frac{K}{\varepsilon}\left((K+\theta_{6})te^{-t}-(K+1)\left(1-e^{-t}\right)\right),\ t\in[0,L_{4}],\tag{Y.4}\label{eqsol:y4}\\
y_{5}(t) & =\theta_{4}e^{-t},\ t\in[0,L_{5}],\tag{Y.5}\label{eqsol:y5}\\
y_{6}(t) & =(1+\varepsilon)e^{-t},\ t\in[0,L_{2}],\tag{Y.6}\label{eqsol:y6}\\
y_{7}(t) & =\theta_{5}e^{-t}-\frac{K}{\varepsilon}(K-1)\left(1-(1+t)e^{-t}\right),\ t\in[0,L_{3}],\tag{Y.7}\label{eqsol:y7}\\
y_{8}(t) & =(K+\theta_{6})e^{-t}-K,\ t\in[0,L_{4}],\tag{Y.8}\label{eqsol:y8}\\
y_{9}(t) & =(K+1)e^{-t}-K,\ t\in[0,L_{2}+L_{5}],\tag{Y.9}\label{eqsol:y9}\\
y_{10}(t) & =(K-1)e^{-t}-K,\ t\in[0,L_{3}].\tag{Y.10}\label{eqsol:y10}
\end{align}
If we apply the boundary conditions given for the right end points
of the domains of $y_{i},$ $i\in\{1,...,10\}$, then we get the following
relations: 
\begin{align}
\theta_{1} & =K-(K-1-\varepsilon)e^{-L_{1}},\tag{B.1}\label{sol:bk1}\\
\theta_{2} & =\theta_{1}e^{-L_{2}}+\frac{K}{\varepsilon}\left((1+\varepsilon)L_{2}e^{-L_{2}}+e^{-L_{2}}-1\right),\tag{B.2}\label{sol:bk2}\\
\theta_{3} & =\theta_{2}e^{-L_{3}}+\frac{K}{\varepsilon}\left((\theta_{5}L_{3}+1)e^{-L_{3}}-1\right)\tag{B.3}\label{sol:bk3}\\
 & \quad-\frac{K^{2}}{\varepsilon^{2}}(K-1)\left(1-\left(1+L_{3}+\frac{L_{3}^{2}}{2}\right)e^{-L_{3}}\right),\notag\\
\theta_{4} & =\theta_{3}e^{-L_{4}}+\frac{K}{\varepsilon}\left((K+\theta_{6})L_{4}e^{-L_{4}}-(K+1)\left(1-e^{-L_{4}}\right)\right),\tag{B.4}\label{sol:bk4}\\
1+\varepsilon & =\theta_{4}e^{-L_{5}},\tag{B.5}\label{sol:bk5}\\
\theta_{5} & =(1+\varepsilon)e^{-L_{2}},\tag{B.6}\label{sol:bk6}\\
\theta_{6} & =\theta_{5}e^{-L_{3}}-\frac{K}{\varepsilon}(K-1)\left(1-(1+L_{3})e^{-L_{3}}\right),\tag{B.7}\label{sol:bk7}\\
1 & =(K+\theta_{6})e^{-L_{4}}-K,\tag{B.8}\label{sol:bk8}\\
-1 & =(K+1)e^{-L_{2}-L_{5}}-K,\tag{B.9}\label{sol:bk9}\\
-1-\varepsilon & =(K-1)e^{-L_{3}}-K.\tag{B.10}\label{sol:bk10}
\end{align}

Next we reduce the algebraic system of equations (H2), (\ref{sol:bk1})-(\ref{sol:bk10})
to a single equation for $L_{2},K$ and $\varepsilon$. Meanwhile,
we express $L_{1},L_{3},L_{4},L_{5}$ and $\theta_{1},\theta_{2},...,\theta_{6}$
as functions of $L_{2},K$ and $\varepsilon$. 

By (\ref{sol:bk10}), 
\begin{equation}
L_{3}=\ln{\frac{K-1}{K-1-\varepsilon}}.\tag{C.1}\label{L3}
\end{equation}
From (\ref{sol:bk9}) and (\ref{sol:bk5}) we obtain that 
\begin{equation}
L_{5}=\ln{\frac{K+1}{K-1}}-L_{2}\tag{C.2}\label{L5}
\end{equation}
and 
\begin{equation}
\theta_{4}=(1+\varepsilon)\frac{K+1}{K-1}e^{-L_{2}}.\tag{C.3}\label{theta4}
\end{equation}
$\theta_{5}$ is already expressed in (\ref{sol:bk6}). In order to
simplify reference to the formulas in this section, we repeat that
\begin{equation}
\theta_{5}=(1+\varepsilon)e^{-L_{2}}.\tag{C.4}\label{theta5}
\end{equation}
 Using this, (\ref{sol:bk7}) and (\ref{L3}), we calculate that 
\begin{equation}
\theta_{6}=(1+\varepsilon)\frac{K-1-\varepsilon}{K-1}e^{-L_{2}}+\frac{K}{\varepsilon}(K-1-\varepsilon)\ln{\frac{K-1}{K-1-\varepsilon}}-K.\tag{C.5}\label{theta6}
\end{equation}
Note that $\theta_{6}+K>0$. From (\ref{sol:bk8}) we obtain that
\begin{equation}
L_{4}=\ln{\frac{K+\theta_{6}}{K+1}}.\tag{C.6}\label{L4}
\end{equation}
Now we use (H2), (\ref{L3}) and (\ref{L4}) to express $L_{1}$:
\begin{equation}
L_{1}=\frac{1}{2}-L_{2}+\frac{5}{2}\ln{(K-1-\varepsilon)}-\frac{3}{2}\ln{\left(K+\theta_{6}\right)}-\ln{(K-1)}.\tag{C.7}\label{L1}
\end{equation}
Substituting the last relation into (\ref{sol:bk1}), we get that
\begin{equation}
\theta_{1}=K-\frac{e^{L_{2}-\frac{1}{2}}(K-1)\left(K+\theta_{6}\right)^{\frac{3}{2}}}{(K-1-\varepsilon)^{\frac{3}{2}}}.\tag{C.8}\label{theta1}
\end{equation}
Then replacing $\theta_{1}$ by (\ref{theta1}) in (\ref{sol:bk2}),
we conclude that 
\begin{equation}
\theta_{2}=Ke^{-L_{2}}-\frac{e^{-\frac{1}{2}}(K-1)(K+\theta_{6})^{\frac{3}{2}}}{(K-1-\varepsilon)^{\frac{3}{2}}}+\frac{K}{\varepsilon}\left((1+\varepsilon)L_{2}e^{-L_{2}}+e^{-L_{2}}-1\right).\tag{C.9}\label{theta2}
\end{equation}
Parameter $\theta_{3}$ appeared as a function of $K,\varepsilon,\theta_{2},\theta_{5}$
and $L_{3}$ in (\ref{sol:bk3}). As $\theta_{2},\theta_{5}$ and
$L_{3}$ have already been given as functions of $L_{2},K$ and $\varepsilon$,
now we see that $\theta_{3}$ can also be expressed as a function
of $L_{2},K$ and $\varepsilon$. We will consider $\theta_{3}$ in
the form
\begin{alignat}{1}
\theta_{3} & =\theta_{2}e^{-L_{3}}+\frac{K}{\varepsilon}\left((1+\varepsilon)L_{3}e^{-L_{2}-L_{3}}+e^{-L_{3}}-1\right)\tag{C.10}\label{theta3}\\
 & \quad-\frac{K^{2}}{\varepsilon^{2}}(K-1)\left(1-\left(1+L_{3}+\frac{L_{3}^{2}}{2}\right)e^{-L_{3}}\right),\notag
\end{alignat}
where $\theta_{2}$ and $L_{3}$ are defined by (\ref{theta2}) and
(\ref{L3}), respectively.

Then (\ref{sol:bk4}) is the only algebraic equation we have not used
so far. We substitute (\ref{theta4}) into the left hand side of (\ref{sol:bk4})
and then multiply this equation by $e^{L_{4}}=(K+\theta_{6})/(K+1)$.
We deduce that 
\[
(1+\varepsilon)\frac{K+\theta_{6}}{K-1}e^{-L_{2}}=\frac{K}{\varepsilon}(K+1)\left(1-(1-L_{4})e^{L_{4}}\right)+\theta_{3}.
\]

Let 
\[
U=\left\{ (L_{2},K,\varepsilon)\in\mathbb{R}^{3}\colon\ \varepsilon\in(0,1),\ K\in(6.5,7),\ L_{2}\in(-\varepsilon,\varepsilon)\right\} .
\]
If we consider $\theta_3,\theta_6$ and $L_4$ as functions on $U$ given by (\ref{theta3}), (\ref{theta6}) and (\ref{L4}), then we can define a map $F:U\rightarrow\mathbb{R}$ by 
\[
F(L_{2},K,\varepsilon)=\frac{K}{\varepsilon}(K+1)\left(1-(1-L_{4})e^{L_{4}}\right)+\theta_{3}-(1+\varepsilon)\frac{K+\theta_{6}}{K-1}e^{-L_{2}}+L_{2}
\]
for all $(L_{2},K,\varepsilon)\in U$. One can easily check that $F$ is well-defined and continuous on $U$. 

The following proposition holds.
\begin{prop}
\label{prop:per sol fixed point}Let $\varepsilon\in(0,1)$ and $K\in\left(6.5,7\right)$.
Suppose that $p\colon\mathbb{R}\rightarrow\mathbb{R}$ is a $2\omega$-periodic
solution of \eqref{Eq}, $p$ is the concatenation of functions $y_{1},y_{2},\ldots,y_{10}$
as given in \eqref{p:egyenletek}-\eqref{p:periodikus}, furthermore
the functions $y_{1},y_{2},\ldots,y_{10}$ satisfy (H1)-(H5) with
some parameters $L_{i}>0$, $i\in\{1,2,...,5\}$, and $\theta_{i}$,
$i\in\left\{ 1,\ldots,6\right\} $. Then $L_{2}\in\left(0,\varepsilon\right)$
and $F(L_{2},K,\varepsilon)=L_{2}$. \end{prop}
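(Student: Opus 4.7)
The plan is to read off the fixed-point identity directly from the derivation already carried out in Section 2. Under the decomposition (\ref{p:egyenletek})--(\ref{p:periodikus}), the assumption that $p$ solves (\ref{Eq}) means that (\ref{eq:1}) holds on $[0,\omega]$, and the four case analyses in Section 2 have already shown that this is equivalent to the ODE system (\ref{eq:y8})--(\ref{eq:y7}) together with the boundary conditions (\ref{sol:bk1})--(\ref{sol:bk10}) and the length identity (H2). The content of the proposition is that this entire algebraic system is encoded in the scalar equation $F(L_{2},K,\varepsilon)=L_{2}$, and that the associated $L_{2}$ lies in $(0,\varepsilon)$.

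First I would locate $L_{2}$. Positivity $L_{2}>0$ is part of (H1). For the upper bound, (H3) forces $\theta_{5}>1$, while (\ref{theta5}) (which is just (\ref{sol:bk6})) gives $\theta_{5}=(1+\varepsilon)e^{-L_{2}}$. Hence $L_{2}<\ln(1+\varepsilon)<\varepsilon$, confirming that $(L_{2},K,\varepsilon)\in U$ and that $F$ is defined at this triple.

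Next I would follow the cascade of substitutions recorded in (\ref{L3})--(\ref{theta3}), noting that it uses precisely the equations (\ref{sol:bk1})--(\ref{sol:bk3}) and (\ref{sol:bk5})--(\ref{sol:bk10}) together with (H2), and that each substitution is valid (no denominator vanishes, and $K+\theta_{6}>0$, as observed after (\ref{theta6})). At the end of this cascade, every parameter $L_{1},L_{3},L_{4},L_{5},\theta_{1},\dots,\theta_{6}$ is pinned down as an explicit function of $(L_{2},K,\varepsilon)$, matching the way $\theta_{3}$, $\theta_{6}$ and $L_{4}$ enter the definition of $F$ on $U$.

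Finally I would impose the single remaining equation (\ref{sol:bk4}). Substituting (\ref{theta4}) for $\theta_{4}$ on the left and multiplying by $e^{L_{4}}=(K+\theta_{6})/(K+1)$ produces exactly the identity displayed immediately before the definition of $U$; rearranging and adding $L_{2}$ to both sides yields $F(L_{2},K,\varepsilon)=L_{2}$. There is no real obstacle: the argument is essentially bookkeeping on top of Section 2, and the only mildly delicate points are the non-vanishing checks in the substitution chain and the observation that the constraint $\theta_{5}>1$ from (H3) sharpens the naive bound to $L_{2}<\ln(1+\varepsilon)<\varepsilon$, which is exactly what places $(L_{2},K,\varepsilon)$ in $U$.
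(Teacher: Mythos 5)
Your proposal is correct and follows essentially the same route as the paper: the paper's own proof also derives $L_{2}<\ln(1+\varepsilon)<\varepsilon$ from $\theta_{5}=(1+\varepsilon)e^{-L_{2}}>1$ (using (H3) and (\ref{theta5})) and then refers the fixed-point identity back to the substitution cascade of Section 2. Your version merely spells out that bookkeeping in more detail.
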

\begin{proof}
Recall from \eqref{theta5} that $\theta_{5}=\left(1+\varepsilon\right)e^{-L_{2}}$,
which is greater than $1$ by (H3). It follows immediately that $L_{2}<\ln\left(1+\varepsilon\right)<\varepsilon$.
The rest of the statement comes from the above calculations. 
\end{proof}
We need to consider $F$ also for $L_{2}\in(-\varepsilon,0]$  because of technical reasons;
see Proposition \ref{prop:ift} in Section 4. We will also use the following remark in the next section.
\begin{rem}
\label{equivalency}The system of algebraic equations $F(L_{2},K,\varepsilon)=L_{2}$,
(\ref{L3})-(\ref{theta3}) is equivalent to the system of equations
(H2), (\ref{sol:bk1})-(\ref{sol:bk10}). 
\end{rem}

\section{The fixed points of $F$ yield periodic solutions}

By the previous section, if (H1)-(H5) hold, and $p\colon\mathbb{R}\rightarrow\mathbb{R}$
is a $2\omega$-periodic solution of \eqref{Eq} given by \eqref{p:egyenletek}-\eqref{p:periodikus},
then $L_{2}\mapsto F(L_{2},K,\varepsilon)$ has a fixed point. We
devote this section to verify the converse statement: if $\varepsilon>0$
is small enough and $K\in\left(6.5,7\right)$, then all sufficiently
small positive fixed points of $L_{2}\mapsto F(L_{2},K,\varepsilon)$
yield periodic solutions of \eqref{Eq}.

We need to consider $L_{1},L_{3},L_{4},L_{5}$ and $\theta_{i},$
$1\leq i\leq6$, as functions of $L_{2},K$ and $\varepsilon$ (and
not as parameters given by hypotheses (H1)-(H5)). So assume that 
\begin{lyxlist}{00.00.0000}
\item [{(H6)}] $L_{i}$, $i\in\{1,3,4,5\}$, and $\theta_{i},$ $1\leq i\leq6$,
are defined by (\ref{L3})-(\ref{theta3}) on 
\[
U=\left\{ (L_{2},K,\varepsilon)\in\mathbb{R}^{3}\colon\ \varepsilon\in(0,1),\ K\in(6.5,7)\ \mbox{and }L_{2}\in(-\varepsilon,\varepsilon)\right\} .
\]

\end{lyxlist}
One easily check that $L_{i}$, $i\in\{1,3,4,5\}$, and $\theta_{i},$
$1\leq i\leq6$, are continuous functions of $(L_{2},K,\varepsilon)$
on $U$.

In this section we also need the assumption that
\begin{lyxlist}{00.00.0000}
\item [{(H7)}] $y_{1},...,y_{10}$ are the solutions (\ref{eqsol:y1})-(\ref{eqsol:y10})
of the ordinary differential equations (\ref{eq:y8})-(\ref{eq:y7}).
\end{lyxlist}
Set 
\[
\theta^{*}\left(\bar{K}\right)=\bar{K}-\sqrt{\frac{\left(\bar{K}+1\right)^{3}}{e\left(\bar{K}-1\right)}}\qquad\mbox{for }\bar{K}\in\left[6.5,7\right].
\]
We claim that $\theta^{*}\left(\bar{K}\right)>1$ for $\bar{K}\in\left[6.5,7\right]$.
As this inequality is equivalent to 
\[
\left(\bar{K}-1\right)\left(1-\sqrt{\frac{\left(\bar{K}+1\right)^{3}}{e\left(\bar{K}-1\right)^{3}}}\right)>0,
\]
 we need to verify that $\left(\bar{K}+1\right)^{3}/\left(\bar{K}-1\right)^{3}<e$ holds.
Indeed, since $\bar{K}\mapsto\left(\bar{K}+1\right)/\left(\bar{K}-1\right)$
is strictly decreasing for $\bar{K}>1$, we see that 
\begin{equation}
\left(\frac{\bar{K}+1}{\bar{K}-1}\right)^{3}\leq\left(\frac{6.5+1}{6.5-1}\right)^{3}=\left(\frac{15}{11}\right)^{3}=2+\frac{713}{1331}\leq2+\frac{800}{1200}=2+\frac{2}{3}<e\label{estimate}
\end{equation}
for $\bar{K}\in\left[6.5,7\right]$.

The first two statements of the subsequent proposition give information
on the behavior of $F$ for small positive $\varepsilon$. The third
statement examines the limit of $y_{2}(t)$, $y_{3}(t)$ and $y_{4}(t)$
for all $t$ in their domains as $\varepsilon\rightarrow0^{+}$. Since
$y_{2}$, $y_{3}$ and $y_{4}$ are well-defined by (\ref{eqsol:y2})-(\ref{eqsol:y4})
only if $L_{i}\geq0$ for $i\in\left\{ 2,3,4\right\} $, here we assume
that $L_{2}\geq0$ and $L_{4}\geq0$. It is clear that $L_{3}=\ln(K-1)-\ln(K-1-\varepsilon)$
is positive.
\begin{prop}
\label{prop:Fsorfejtes} The subsequent assertions hold under hypothesis
(H6).\\
 (i) $\theta_{6}=1+O(\varepsilon)$, $L_{4}=O(\varepsilon)$ and thus
\[
\frac{K}{\varepsilon}(K+1)\left(1-(1-L_{4})e^{L_{4}}\right)=O(\varepsilon)\quad\mbox{as }\varepsilon\rightarrow0^{+}.
\]
(ii) If $K\rightarrow\bar{K}\in[6.5,7]$ and $\varepsilon\rightarrow0^{+}$,
then $\theta_{3}$ converges to $\theta^{*}\left(\bar{K}\right)$.\\
(iii) Assume in addition that $L_{2}\geq0$ and $L_{4}\geq0$. Define
$y_{2}$, $y_{3}$ and $y_{4}$ by (\ref{eqsol:y2})-(\ref{eqsol:y4}).
If $K\rightarrow\bar{K}\in[6.5,7]$ and $\varepsilon\rightarrow0^{+}$,
then $y_{2}\left(t\right),y_{3}(t)$ and $y_{4}(t)$ converges to
$\theta^{*}\left(\bar{K}\right)$, uniformly in $t\in[0,L_{2}]$,
$t\in[0,L_{3}]$ and $t\in[0,L_{4}]$, respectively. 
\end{prop}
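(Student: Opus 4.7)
The approach is to perform Taylor expansion in $\varepsilon$ of each quantity appearing in formulas (\ref{L3})--(\ref{theta3}) and (\ref{eqsol:y2})--(\ref{eqsol:y4}), with $K$ in the compact set $[6.5,7]$ so that all implicit constants are uniform in $K$. Throughout, $L_2\in(-\varepsilon,\varepsilon)$ gives $L_2=O(\varepsilon)$ and $e^{-L_2}=1+O(\varepsilon)$, while (\ref{L3}) gives the \emph{exact} identity $e^{-L_3}=(K-1-\varepsilon)/(K-1)$, hence $L_3=\varepsilon/(K-1)+O(\varepsilon^2)$ and $e^{-L_3}-1=-\varepsilon/(K-1)$.

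For part (i), I would feed the expansion $\ln\frac{K-1}{K-1-\varepsilon}=\frac{\varepsilon}{K-1}+O(\varepsilon^2)$ into (\ref{theta6}): the first summand equals $1+O(\varepsilon)$, the logarithmic summand equals $K+O(\varepsilon)$, and these cancel the $-K$ to give $\theta_6=1+O(\varepsilon)$. Then (\ref{L4}) yields $L_4=\ln(1+O(\varepsilon)/(K+1))=O(\varepsilon)$, and the elementary expansion $1-(1-u)e^{u}=u^{2}/2+O(u^{3})$ applied at $u=L_4$ gives the last assertion.

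For part (ii), I would treat the three summands of (\ref{theta3}) separately. The first, $\theta_2 e^{-L_3}$, reduces to showing $\theta_2\to\theta^*(\bar K)$ via (\ref{theta2}): $Ke^{-L_2}\to\bar K$; the square-root-style middle term tends to $\sqrt{(\bar K+1)^3/(e(\bar K-1))}$ using $\theta_6\to 1$ from (i); and the $O(1/\varepsilon)$ term vanishes via the cancellation $(1+\varepsilon)L_2 e^{-L_2}+e^{-L_2}-1=O(L_2^2)+O(\varepsilon L_2)=O(\varepsilon^2)$. The middle summand of (\ref{theta3}) is the delicate one: the exact identity $e^{-L_3}-1=-\varepsilon/(K-1)$ combined with $(1+\varepsilon)L_3 e^{-L_2-L_3}=\varepsilon/(K-1)+O(\varepsilon^2)$ produces cancellation of the leading $\varepsilon$-term, leaving $\frac{K}{\varepsilon}\cdot O(\varepsilon^2)=O(\varepsilon)$. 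For the third summand, the key Taylor identity
\[
1-\Bigl(1+L_3+\tfrac{L_3^2}{2}\Bigr)e^{-L_3}=\frac{L_3^3}{6}+O(L_3^4),
\]
combined with $L_3=O(\varepsilon)$, gives $\frac{K^2}{\varepsilon^2}(K-1)\cdot O(\varepsilon^3)=O(\varepsilon)$. Summing the three pieces, $\theta_3\to\theta^*(\bar K)$.

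For part (iii), the point is that the formulas (\ref{eqsol:y2})--(\ref{eqsol:y4}) have exactly the same algebraic structure as the boundary-condition formulas defining $\theta_2,\theta_3,\theta_4$, with the interval length $L_i$ replaced by the running variable $t\in[0,L_i]$; since $t=O(\varepsilon)$ uniformly on each relevant interval, every Taylor estimate from (i)--(ii) transfers verbatim and is uniform in $t$. Combined with $\theta_1,\theta_2,\theta_3\to\theta^*(\bar K)$ (the convergence $\theta_1\to\theta^*(\bar K)$ follows from (\ref{theta1}) by the same argument as for $\theta_2$, noting that the $K/\varepsilon$-term is absent there), the required uniform convergence of $y_2,y_3,y_4$ to $\theta^*(\bar K)$ follows. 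The principal obstacle is the cancellation in the second and third summands of $\theta_3$: each individually looks like it diverges as $1/\varepsilon$ or $1/\varepsilon^2$, and only by using the exact identity for $e^{-L_3}-1$ and the third-order vanishing $(1+L_3+L_3^2/2)e^{-L_3}=1-L_3^3/6+O(L_3^4)$ does one see that these terms contribute nothing to the limit; keeping all estimates uniform in $K\in[6.5,7]$ and in $t$ throughout is the accompanying bookkeeping burden.
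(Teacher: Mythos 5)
Your proposal is correct and follows essentially the same route as the paper: Taylor expansion in $\varepsilon$ of each term with constants uniform over $K\in[6.5,7]$, the cancellation $\theta_6=1+O(\varepsilon)$ forcing $L_4=O(\varepsilon)$, and the second- and third-order vanishing of $(1-L_4)e^{L_4}$ and $(1+L_3+L_3^2/2)e^{-L_3}$ taming the $1/\varepsilon$ and $1/\varepsilon^2$ prefactors. The only (harmless) difference is organizational — you prove (ii) directly from \eqref{theta2}--\eqref{theta3} for all $L_2\in(-\varepsilon,\varepsilon)$ and then transfer the estimates to the running variable $t$ for (iii), whereas the paper proves (iii) first and extracts (ii) from $\theta_3=y_3(L_3)$, handling $L_2<0$ separately afterwards.
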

Before giving the proof of this proposition, let us make a remark
on notation $O$. If $g$ is a function of $L_{2},K,\varepsilon,t$
(or only some of these variables) on a set $D$, and $k$ is a positive
integer, then the expression $g=O\left(\varepsilon^{k}\right)$ as
$\varepsilon\rightarrow0^{+}$ (or simply $g=O\left(\varepsilon^{k}\right)$)
means that there exists $M>0$\emph{ }such that $|g\left(L_{2},K,\varepsilon,t\right)|\leq M\varepsilon^{k}$
if $\left(L_{2},K,\varepsilon,t\right)\in D$ and $\varepsilon>0$
is sufficiently close to zero. Constant $M$ is always independent
from\emph{ $L_{2},K$} and $t$ in this paper.
\begin{proof}
\textit{The proof of statement (i).} It is well-known that 
\begin{equation}
\ln(1+x)=x+O\left(x^{2}\right)\mbox{ as }x\rightarrow0.\label{expansion for ln}
\end{equation}
If $K\in\left(6.5,7\right)$ and $\varepsilon\rightarrow0^{+}$, then
\[
\frac{\varepsilon}{K-1-\varepsilon}\rightarrow0^{+}
\]
and thus 
\begin{equation}
\ln{\frac{K-1}{K-1-\varepsilon}}=\ln{\left(1+\frac{\varepsilon}{K-1-\varepsilon}\right)}=\frac{\varepsilon}{K-1-\varepsilon}+O\left(\varepsilon^{2}\right)\mbox{ as }\varepsilon\rightarrow0^{+}.\label{sorfejtesln}
\end{equation}
Therefore 
\begin{equation}
\frac{K}{\varepsilon}(K-1-\varepsilon)\ln{\frac{K-1}{K-1-\varepsilon}}=K+O(\varepsilon).\label{biz1}
\end{equation}
In addition, since $L_{2}\in(-\varepsilon,\varepsilon)$, 
\begin{equation}
(1+\varepsilon)\frac{K-1-\varepsilon}{K-1}e^{-L_{2}}=(1+\varepsilon)\left(1-\frac{\varepsilon}{K-1}\right)\left(1+O(L_{2})\right)=1+O(\varepsilon).\label{biz2}
\end{equation}
Substituting (\ref{biz1}) and (\ref{biz2}) into (\ref{theta6}),
we obtain that $\theta_{6}=1+O(\varepsilon)$.

Using (\ref{L4}), the previous statement regarding $\theta_{6}$
and \eqref{expansion for ln}, we immediately get that 
\begin{equation}
L_{4}=\ln{\left(1+\frac{\theta_{6}-1}{K+1}\right)}=O(\varepsilon).\label{biz3}
\end{equation}
By the power series expansion of the exponential function, 
\begin{equation}
1-e^{L_{4}}(1-L_{4})=O\left(L_{4}^{2}\right)\mbox{ as }L_{4}\rightarrow0.\label{biz4}
\end{equation}
The last statement of Proposition \ref{prop:Fsorfejtes}.(i) then
comes from (\ref{biz3}) and (\ref{biz4}). 

\emph{The proof of statement (iii).} Let us now prove\emph{ (iii)
}in three steps. Let $L_{2}\geq0$ and $L_{4}\geq0$.

1.\emph{ }The convergence of $y_{2}\left(t\right)$ for $t\in[0,L_{2}]$\emph{.}
We see from statement (i) and formula (\ref{theta1}) that 
\begin{equation}
\lim_{\substack{K\rightarrow\bar{K}\\
\varepsilon\rightarrow0^{+}
}
}\theta_{1}=\bar{K}-\sqrt{\frac{\left(\bar{K}+1\right)^{3}}{e\left(\bar{K}-1\right)}}=\theta^{*}\left(\bar{K}\right).\label{biz5}
\end{equation}
Using that $0\leq t\leq L_{2}<\varepsilon$ and $e^{x}=1+x+O\left(x^{2}\right)$
as $x\rightarrow0$, we also see that 
\begin{alignat}{1}
\frac{K}{\varepsilon}\left((1+\varepsilon)te^{-t}+e^{-t}-1\right) & =\frac{K}{\varepsilon}\left((1+\varepsilon)t\left(1-t+O\left(t^{2}\right)\right)-t+O\left(t^{2}\right)\right)=\label{biz6}\\
 & \quad\frac{K}{\varepsilon}\left(\varepsilon t+O\left(t^{2}\right)\right)=O(\varepsilon).\notag
\end{alignat}
Substituting (\ref{biz5}) and (\ref{biz6}) into (\ref{eqsol:y2}),
we get that $y_{2}(t)$ converges to $\theta^{*}\left(\bar{K}\right)$
for all $t\in[0,L_{2}],$ and this convergence is uniform in $t$. 

2. The convergence of $y_{3}(t)$ for $t\in[0,L_{3}]$, using formula
(\ref{eqsol:y3}). Observe that if $\theta_{1}$ is given by \eqref{theta1},
and $\theta_{2}$ is determined by \eqref{theta2}, then \eqref{eqsol:y2}
yields that $y_{2}(L_{2})=\theta_{2}$. So by our last result, $\lim_{\varepsilon\rightarrow0^{+},K\rightarrow\bar{K}}\theta_{2}=\theta^{*}\left(\bar{K}\right)$.
We also see from (\ref{theta5}) and from $L_{2}\in(-\varepsilon,\varepsilon)$
that $\theta_{5}=1+O(\varepsilon)\mbox{ as }\varepsilon\rightarrow0^{+}.$
Using this and the power series expansion of the exponential function,
we get the following for $0\leq t\leq L_{3}=O(\varepsilon)$: 
\begin{equation}
(\theta_{5}t+1)e^{-t}-1=((1+O(\varepsilon))t+1)\left(1-t+O\left(t^{2}\right)\right)-1=O\left(\varepsilon^{2}\right).\label{biz9}
\end{equation}
We also observe that 
\begin{equation}
1-e^{-t}\left(1+t+\frac{t^{2}}{2}\right)=1-\left(1-t+\frac{t^{2}}{2}+O(t^{3})\right)\left(1+t+\frac{t^{2}}{2}\right)=O\left(\varepsilon^{3}\right).\label{biz10}
\end{equation}
Summing up, (\ref{eqsol:y3}) yields that if $0\leq t\leq L_{3}=O(\varepsilon)$,
then 
\[
\lim_{\substack{K\rightarrow\bar{K}\\
\varepsilon\rightarrow0^{+}
}
}y_{3}(t)=\lim_{\substack{K\rightarrow\bar{K}\\
\varepsilon\rightarrow0^{+}
}
}\theta_{2}e^{-t}=\theta^{*}\left(\bar{K}\right).
\]
This convergence is uniform in $t$.

3. The convergence of $y_{4}\left(t\right)$ for $t\in\left[0,L_{4}\right]$.
On the one hand, if $y_{3}$ is defined by \eqref{eqsol:y3}, $\theta_{5}$
is defined (\ref{theta5}) and $\theta_{3}$ is given by \eqref{theta3},
then $\theta_{3}=y_{3}\left(L_{3}\right)$. Hence, by the previous
paragraph, $\theta_{3}$ converges to $\theta^{*}\left(\bar{K}\right)$
if $K\rightarrow\bar{K}\in[6.5,7]$ and $\varepsilon\rightarrow0^{+}$.
(Note that we have proved statement (ii) in the case $L_{2}\geq0$).
On the other hand, it follows from $\theta_{6}=1+O(\varepsilon)$
that 
\[
(K+\theta_{6})te^{-t}-(K+1)\left(1-e^{-t}\right)
\]
equals 
\[
(K+1+O(\varepsilon))\left(t+O\left(t^{2}\right)\right)-(K+1)\left(t+O\left(t^{2}\right)\right)=O\left(\varepsilon^{2}\right)
\]
for $0\leq t\leq L_{4}=O(\varepsilon)$. In consequence, formula (\ref{eqsol:y4})
shows that $y_{4}(t)$ converges to $\theta^{*}\left(\bar{K}\right)$,
uniformly in $t\in[0,L_{4}]$.

\emph{The proof of statement (ii).} We have already verified (ii)
for $L_{2}\in[0,\varepsilon)$. Now suppose that $L_{2}\in(-\varepsilon,0)$
and observe that \eqref{biz6} holds also with $t=L_{2}\in(-\varepsilon,0)$.
Therefore \eqref{theta2} and the equality $\theta_{6}=1+O(\varepsilon)$
together show that $\theta_{2}$ converges to $\theta^{*}\left(\bar{K}\right)$
also in the case when $L_{2}<0$. Now we can use $L_{3}=O(\varepsilon)$, \eqref{theta5} and \eqref{biz9}-\eqref{biz10} with $t=L_{3}$ to verify that $\theta_{3}$ (defined by \eqref{theta3}) converges
to $\theta^{*}\left(\bar{K}\right)$ if $K\rightarrow\bar{K}\in[6.5,7]$,
$\varepsilon\rightarrow0^{+}$ and $L_{2}\in(-\varepsilon,0)$.\end{proof}
\begin{cor}
\label{cor: limit of K} Assume that $\lim_{n\rightarrow\infty}\varepsilon_{n}=0^{+},$
\[
(L_{2,n},K_{n},\varepsilon_{n})\in U\quad\mbox{and}\quad F\left(L_{2,n},K_{n},\varepsilon_{n}\right)=L_{2,n}\mbox{ for all }n\geq0.
\]
Then $\left(K_{n}\right)_{n=0}^{\infty}$ is convergent, and $\lim_{n\rightarrow\infty}K_{n}=K_{0},$
where $K_{0}$ is the unique solution of (\ref{ke}) in {[}6.5,7{]}. \end{cor}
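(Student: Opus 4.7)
The plan is to pass to the limit in the fixed-point equation and show that any subsequential limit of $(K_n)$ must satisfy the algebraic equation (\ref{ke}). First I would rewrite the identity $F(L_{2,n},K_n,\varepsilon_n)=L_{2,n}$ in the equivalent form
\[
\frac{K_n}{\varepsilon_n}(K_n+1)\bigl(1-(1-L_{4,n})e^{L_{4,n}}\bigr) + \theta_{3,n} = (1+\varepsilon_n)\frac{K_n+\theta_{6,n}}{K_n-1}e^{-L_{2,n}},
\]
obtained by cancelling the $L_{2,n}$ terms from both sides of $F(L_{2,n},K_n,\varepsilon_n)=L_{2,n}$. Since $(K_n)\subset(6.5,7)$ is bounded, it suffices to show that every convergent subsequence $K_{n_k}\to\bar K\in[6.5,7]$ has limit $K_0$.

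For such a subsequence, I would apply Proposition \ref{prop:Fsorfejtes}. Part (i) gives that the first term on the left tends to $0$ and that $\theta_{6,n_k}=1+O(\varepsilon_{n_k})\to 1$; part (ii) yields $\theta_{3,n_k}\to\theta^{*}(\bar K)$. Since $L_{2,n_k}\in(-\varepsilon_{n_k},\varepsilon_{n_k})\to 0$, we also have $e^{-L_{2,n_k}}\to 1$, so the right-hand side converges to $(\bar K+1)/(\bar K-1)$. Passing to the limit produces
\[
\theta^{*}(\bar K)=\frac{\bar K+1}{\bar K-1},\qquad\text{that is,}\qquad \bar K-\sqrt{\frac{(\bar K+1)^{3}}{e(\bar K-1)}}=\frac{\bar K+1}{\bar K-1}.
\]

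Rearranging gives $(\bar K^{2}-2\bar K-1)/(\bar K-1)=\sqrt{(\bar K+1)^{3}/(e(\bar K-1))}$. For $\bar K\ge 6.5$ one has $\bar K^{2}-2\bar K-1\ge 28.25>0$, so both sides are positive and squaring is reversible. This yields $e(\bar K^{2}-2\bar K-1)^{2}=(\bar K-1)(\bar K+1)^{3}$, which is exactly equation (\ref{ke}). Since $K_{0}$ is the unique solution of (\ref{ke}) in $[6.5,7]$, we conclude $\bar K=K_{0}$. Because every convergent subsequence of the bounded sequence $(K_{n})$ then has the same limit $K_{0}$, the whole sequence converges to $K_{0}$.

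The only point requiring care is that each of the ``$O(\varepsilon)$'' terms in Proposition \ref{prop:Fsorfejtes} genuinely tends to zero along the sequences $(L_{2,n_k},K_{n_k},\varepsilon_{n_k})$; but this is automatic from the definition of the $O$-notation used in the paper, whose implicit constants are independent of $L_{2}$ and $K$. All the substantial analytic work has been done in Proposition \ref{prop:Fsorfejtes}, so the only remaining step is the algebraic manipulation that identifies the limit equation with (\ref{ke}).
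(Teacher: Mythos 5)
Your proposal is correct and follows essentially the same route as the paper: extract a convergent subsequence, pass to the limit in the fixed-point equation using Proposition \ref{prop:Fsorfejtes}.(i)--(ii), and identify the limit equation with (\ref{ke}) via the uniqueness of $K_{0}$. The only difference is that you spell out the algebraic equivalence (rearranging and squaring, with the positivity check for $\bar K\ge 6.5$) that the paper dismisses as ``straightforward.''
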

\begin{proof}
We already know from Section 3 of paper \cite{Krisztin-Vas} that
(\ref{ke}) has exactly one solution $K_{0}\approx6.87$ in $[6.5,7]$.

It suffices to prove that each subsequence of $\left(K_{n}\right)_{n=0}^{\infty}$
has a subsequence converging to $K_{0}$. As $K_{n}\in(6.5,7)$ for
all $n\geq1$, it is clear that any subsequence of $\left(K_{n}\right)_{n=0}^{\infty}$
has a convergent subsequence $\left(K_{n_{l}}\right)_{l=0}^{\infty}$.
Let 
\[
\bar{K}=\lim_{l\rightarrow\infty}K_{n_{l}}\in[6.5,7].
\]
Now let $l$ tend to infinity in the equation $F\left(L_{2,n_{l}},K_{n_{l}},\varepsilon_{n_{l}}\right)=L_{2,n_{l}}$.
Under the assumptions of the Corollary, $\lim_{l\rightarrow\infty}L_{2,n_{l}}=0$.
This fact, the definition of $F$ and Proposition \ref{prop:Fsorfejtes}.(i)-(ii)
together show that $\bar{K}\in[6.5,7]$ is a solution of 
\[
K-\sqrt{\frac{(K+1)^{3}}{e(K-1)}}-\frac{K+1}{K-1}=0.
\]
It is straightforward to show that this equation is equivalent to
(\ref{ke}), and thus $\bar{K}=K_{0}$. The proof is complete.
\end{proof}
For $K\in(6.5,7)$ and $\varepsilon\in(0,1)$, let $\widehat{L}_{2}$
be that value of $L_{2}$ for which $L_{4}=0$, i.e., for which $\theta_{6}=1.$
Using \eqref{theta6}, we can express $\widehat{L}_{2}$ as a function
of $K$ and $\varepsilon$: 
\begin{equation}
\widehat{L}_{2}(K,\varepsilon)=\ln{\left((1+\varepsilon)\frac{K-1-\varepsilon}{K-1}\right)}-\ln{\left(K+1-\frac{K}{\varepsilon}(K-1-\varepsilon)\ln{\frac{K-1}{K-1-\varepsilon}}\right)}.\label{L2kalap}
\end{equation}

\begin{prop}
\label{prop:hatL2} If $K\in(6.5,7)$ and $\varepsilon>0$ is small
enough, then $\widehat{L}_{2}(K,\varepsilon)\in(0,\varepsilon)$.\end{prop}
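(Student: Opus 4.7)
The plan is to prove the proposition by an explicit asymptotic expansion of $\widehat{L}_2(K,\varepsilon)$ in $\varepsilon$, showing that the leading order term is linear in $\varepsilon$ with a coefficient that lies strictly between $0$ and $1$ uniformly in $K\in[6.5,7]$. Since the leading-order coefficient turns out to depend continuously on $K$ and remains bounded away from the endpoints $0$ and $1$, the higher-order error terms can be controlled uniformly in $K$, which yields $\widehat{L}_2\in(0,\varepsilon)$ for all sufficiently small $\varepsilon>0$.

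Concretely, I would first expand the inner logarithm $\ln\frac{K-1}{K-1-\varepsilon}=-\ln\bigl(1-\tfrac{\varepsilon}{K-1}\bigr)$ via the standard series $-\ln(1-u)=u+u^2/2+O(u^3)$ with $u=\varepsilon/(K-1)$, then multiply by $K-1-\varepsilon=(K-1)(1-u)$ and divide by $\varepsilon$. This gives
\[
\frac{K}{\varepsilon}(K-1-\varepsilon)\ln\frac{K-1}{K-1-\varepsilon}=K-\frac{K\varepsilon}{2(K-1)}+O(\varepsilon^2),
\]
so the argument of the second logarithm in \eqref{L2kalap} is $1+\frac{K\varepsilon}{2(K-1)}+O(\varepsilon^2)$ and that logarithm equals $\frac{K\varepsilon}{2(K-1)}+O(\varepsilon^2)$. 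Independently, $\ln\bigl((1+\varepsilon)\tfrac{K-1-\varepsilon}{K-1}\bigr)=\ln(1+\varepsilon)+\ln\bigl(1-\tfrac{\varepsilon}{K-1}\bigr)=\tfrac{K-2}{K-1}\varepsilon+O(\varepsilon^2)$. Subtracting gives
\[
\widehat{L}_{2}(K,\varepsilon)=\frac{K-4}{2(K-1)}\,\varepsilon+O(\varepsilon^2)\qquad\text{as }\varepsilon\to 0^+.
\]

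The conclusion then follows because $\frac{K-4}{2(K-1)}$ is a continuous function of $K$ on $[6.5,7]$ taking values in a compact subset of $(0,1)$: it is positive since $K>4$, and strictly less than $1$ since $K-4<2(K-1)$ is equivalent to $K>-2$. Therefore there exist constants $0<c_1<c_2<1$ (independent of $K$) with $c_1\le\frac{K-4}{2(K-1)}\le c_2$ for all $K\in[6.5,7]$, and the $O(\varepsilon^2)$ remainder is uniformly bounded in $K$ on this compact interval. For $\varepsilon$ small enough this forces $0<\widehat{L}_2(K,\varepsilon)<\varepsilon$ uniformly in $K$.

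The only genuine step requiring care is keeping the asymptotic remainders \emph{uniform in} $K$ when collecting the various Taylor expansions; this is routine because $K\mapsto 1/(K-1)$ is bounded on $[6.5,7]$, but it is what distinguishes the statement from a pointwise-in-$K$ expansion. Everything else is bookkeeping of the series expansions of $\ln(1+x)$ and of the rational functions appearing in \eqref{L2kalap}.
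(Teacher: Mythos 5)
Your proposal is correct and follows essentially the same route as the paper: both expand the two logarithms in \eqref{L2kalap} to first order in $\varepsilon$, arrive at $\widehat{L}_2=\frac{K-4}{2(K-1)}\varepsilon+O(\varepsilon^2)$, and conclude from the fact that the coefficient lies in $(0,1)$. The only difference is the immaterial choice of writing the inner logarithm as $-\ln\bigl(1-\tfrac{\varepsilon}{K-1}\bigr)$ rather than $\ln\bigl(1+\tfrac{\varepsilon}{K-1-\varepsilon}\bigr)$; your explicit attention to uniformity of the remainders in $K$ matches the paper's convention that the constants in the $O$ notation are independent of $K$.
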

\begin{proof}
It is well-known that 
\[
\ln(1+x)=x-\frac{x^{2}}{2}+O\left(x^{3}\right)\mbox{ as }x\rightarrow0.
\]
In consequence, 
\begin{align*}
\ln{\frac{K-1}{K-1-\varepsilon}} & =\ln{\left(1+\frac{\varepsilon}{K-1-\varepsilon}\right)}\\
 & =\frac{\varepsilon}{K-1-\varepsilon}-\frac{\varepsilon^{2}}{2(K-1-\varepsilon)^{2}}+O\left(\varepsilon^{3}\right),
\end{align*}
and 
\begin{equation}
\frac{K}{\varepsilon}(K-1-\varepsilon)\ln{\frac{K-1}{K-1-\varepsilon}}=K-\frac{K\varepsilon}{2(K-1-\varepsilon)}+O\left(\varepsilon^{2}\right)\mbox{ as }\varepsilon\rightarrow0^{+}.\label{biz17}
\end{equation}
Applying the geometric series expansion $(1-x)^{-1}=\sum_{n=0}^{\infty}x^{n}$
with the choice $x=\varepsilon/(K-1)$, we easily deduce that 
\[
\frac{1}{K-1-\varepsilon}=\frac{1}{K-1}\cdot\frac{1}{1-\frac{\varepsilon}{K-1}}=\frac{1}{K-1}+O(\varepsilon),
\]
and thus 
\begin{equation}
\frac{K}{\varepsilon}(K-1-\varepsilon)\ln{\frac{K-1}{K-1-\varepsilon}}=K-\frac{K}{2(K-1)}\varepsilon+O\left(\varepsilon^{2}\right).\label{biz21}
\end{equation}
Using this, we get that 
\begin{equation}
\ln{\left(K+1-\frac{K}{\varepsilon}(K-1-\varepsilon)\ln{\frac{K-1}{K-1-\varepsilon}}\right)}=\frac{K}{2(K-1)}\varepsilon+O\left(\varepsilon^{2}\right).\label{biz19}
\end{equation}

Also note that 
\[
(1+\varepsilon)\frac{K-1-\varepsilon}{K-1}=1+\frac{K-2}{K-1}\varepsilon-\frac{1}{K-1}\varepsilon^{2},
\]
and thus 
\begin{equation}
\ln\left((1+\varepsilon)\frac{K-1-\varepsilon}{K-1}\right)=\frac{K-2}{K-1}\varepsilon+O\left(\varepsilon^{2}\right).\label{biz18}
\end{equation}
Subtracting (\ref{biz19}) from (\ref{biz18}), we conclude that 
\begin{equation}
\widehat{L}_{2}=\left(\frac{K-2}{K-1}-\frac{K}{2(K-1)}\right)\varepsilon+O\left(\varepsilon^{2}\right)=\frac{K-4}{2(K-1)}\varepsilon+O\left(\varepsilon^{2}\right).\label{biz22}
\end{equation}
Since $(K-4)/(2K-2)\in(0,1)$ for $K\in(6.5,7)$, we see that $\widehat{L}_{2}\in(0,\varepsilon)$
for all sufficiently small positive $\varepsilon$. 
\end{proof}
Consider the following subset of $U$:
\[
V=\left\{ (L_{2},K,\varepsilon)\colon\ \varepsilon\in(0,1),\ K\in\left(6.5,7\right)\mbox{ and }L_{2}\in\left(0,\widehat{L}_{2}(K,\varepsilon)\right)\right\} \subset U.
\]

\begin{rem}
\label{rem: theta6 L4} It is clear from \eqref{theta6} and \eqref{L4}
that $\theta_{6}$ and $L_{4}$ are strictly decreasing functions
of $L_{2}$. Hence, if $\varepsilon>0$ is small and $\left(L_{2},K,\varepsilon\right)\in V$,
then $\theta_{6}>1$ and $L_{4}>0$. \end{rem}

Now we are ready to clarify which fixed points of $L_{2}\mapsto F(L_{2},K,\varepsilon)$ yield periodic solutions.
\begin{prop}
\label{prop:H1H2stf} Assume that
\begin{itemize}
\item $(L_{2},K,\varepsilon)\in V$, $F(L_{2},K,\varepsilon)=L_{2}$ and
$\varepsilon>0$ is sufficiently small, 
\item $L_{1},L_{3},L_{4},L_{5}$ and $\theta_{i},$ $1\leq i\leq6$, are
defined as in (H6),
\item $y_{i}$, $1\leq i\leq10$, are defined as in (H7).
\end{itemize}
Then (H1)-(H5) hold.\end{prop}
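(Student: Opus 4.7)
The plan is to check (H1)--(H5) in turn, using Remark \ref{equivalency} to reduce most of the boundary-value structure to the fixed-point identity, and then using Proposition \ref{prop:Fsorfejtes}, Corollary \ref{cor: limit of K}, and Remark \ref{rem: theta6 L4} to control the various quantities in the small-$\varepsilon$ regime.

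First, by Remark \ref{equivalency}, the hypothesis $F(L_2,K,\varepsilon)=L_2$ combined with the definitions in (H6) is equivalent to the full algebraic system (H2), (B.1)--(B.10). This immediately yields (H2), and plugging the explicit solutions (Y.1)--(Y.10) in at the endpoints produces all of the boundary values required in (H4). Next I would establish (H1). The positivity of $L_2$ is built into $V$; $L_3>0$ is immediate from \eqref{L3}; $L_4>0$ follows from $L_2<\widehat{L}_2$ and Remark \ref{rem: theta6 L4}; $L_5>0$ because $\ln((K+1)/(K-1))$ is bounded below on $(6.5,7)$ while $L_2<\varepsilon$ is small. The positivity of $L_1$ is the only delicate piece: Corollary \ref{cor: limit of K} forces $K$ to be close to $K_0$ along any sequence of fixed points with $\varepsilon\to 0^+$, and \eqref{L1} together with $\theta_6\to 1$ from Proposition \ref{prop:Fsorfejtes}(i) gives $L_1\to \tfrac{1}{2}-\tfrac{3}{2}\ln((K_0+1)/(K_0-1))$, which estimate \eqref{estimate} shows is strictly positive.

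For (H3), the pair $\theta_5,\theta_6$ is handled by direct computation: $\theta_5=(1+\varepsilon)e^{-L_2}\in(1,1+\varepsilon)$ because $0<L_2<\widehat{L}_2<\ln(1+\varepsilon)$ for small $\varepsilon$ (the second inequality follows from the expansion $\widehat{L}_2=\tfrac{K-4}{2(K-1)}\varepsilon+O(\varepsilon^2)$), and $\theta_6\in(1,1+\varepsilon)$ follows from Remark \ref{rem: theta6 L4} together with the expansion of \eqref{theta6} at $L_2=0$, namely $\theta_6|_{L_2=0}=1+\tfrac{K-4}{2(K-1)}\varepsilon+O(\varepsilon^2)<1+\varepsilon$, plus the monotonic decay of $\theta_6$ in $L_2$. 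For $\theta_1,\theta_2,\theta_3$, the argument used in the proof of Proposition \ref{prop:Fsorfejtes} combined with Corollary \ref{cor: limit of K} shows convergence to $\theta^{*}(K_0)>1$; similarly, $\theta_4=(1+\varepsilon)(K+1)/(K-1)\,e^{-L_2}\to (K_0+1)/(K_0-1)>1$. Since each of these four limits strictly exceeds $1$, for all sufficiently small $\varepsilon$ we get $\theta_i>1+\varepsilon$ for $i\in\{1,2,3,4\}$.

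Finally, (H5) is verified from the explicit formulas (Y.1)--(Y.10). The functions $y_1,y_5,y_6,y_8,y_9,y_{10}$ are manifestly monotone, so their interior confinement reduces to the endpoint conditions secured by (H3)--(H4); for $y_2,y_3,y_4$, whose closed form is less transparent, Proposition \ref{prop:Fsorfejtes}(iii) supplies uniform convergence to $\theta^{*}(K_0)>1$, giving $y_i(t)>1+\varepsilon$ on the interior for small $\varepsilon$; and for $y_7$, the ODE \eqref{eq:y7} together with $y_{10}+1<0$ forces $\dot{y}_7<0$ whenever $y_7>0$, so $y_7$ descends strictly from $\theta_5$ to $\theta_6$, both of which lie in $(1,1+\varepsilon)$ by the previous step. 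The main obstacle is the coupling between Corollary \ref{cor: limit of K} (forcing $K$ near $K_0$) and Proposition \ref{prop:Fsorfejtes} (identifying the limit values): only after pinning $K_0$ down do the required strict inequalities emerge from the limit analysis, so that (H1)--(H5) all survive for every sufficiently small $\varepsilon$.
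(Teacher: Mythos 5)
Your proposal is correct and follows essentially the same route as the paper: Remark \ref{equivalency} disposes of (H2) and the boundary conditions in (H4), Corollary \ref{cor: limit of K} together with \eqref{L1}, \eqref{L5} and \eqref{estimate} gives $L_1,L_5>0$, and Proposition \ref{prop:Fsorfejtes} plus monotonicity of the explicit solutions \eqref{eqsol:y1}--\eqref{eqsol:y10} yields (H3) and (H5). The only (harmless) deviations are micro-level: the paper gets $\theta_1>1+\varepsilon$ from the strict increase of $y_1$ rather than from the limit $\theta_1\to\theta^{*}(K_0)$, and it obtains $\theta_5,\theta_6\in(1,1+\varepsilon)$ from the monotone chain $1+\varepsilon=y_6(0)>\theta_5>\theta_6>y_8(L_4)=1$ rather than from your direct $\varepsilon$-expansions of \eqref{theta5} and \eqref{theta6}.
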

\begin{proof}

The functions $y_{1},...,y_{10}$ can be defined by (\ref{eqsol:y1})-(\ref{eqsol:y10})
only if $L_{1},L_{2},L_{3}$, $L_{4}$ and $L_{5}$ are nonnegative. So
let us first show (H1). It is clear from (\ref{L3}) that $L_{3}>0$,
and we know from Remark \ref{rem: theta6 L4} that $L_{4}>0$. Corollary
\ref{cor: limit of K} and formula (\ref{L5}) yield that if $\epsilon$
(and hence $L_{2}$) tends to $0$, then $L_{5}$ tends to $\ln((K_{0}+1)/(K_{0}-1))$.
Similarly, Corollary \ref{cor: limit of K} and formula (\ref{L1})
give that 
\[
\lim_{\varepsilon\rightarrow0^{+}}L_{1}=\frac{1}{2}-\frac{1}{2}\ln\left(\frac{K_{0}+1}{K_{0}-1}\right)^{3}
\]
This limit is also positive, see \eqref{estimate}. Thus $L_{1}$
and $L_{5}$ are positive if $\varepsilon>0$ is small enough.

By Remark \ref{equivalency}, equation $F(L_{2},K,\varepsilon)=L_{2}$
and (\ref{L3})-(\ref{theta3}) imply (H2). Thus (H2) holds under
the assumptions of this proposition.

It is clear from (\ref{eqsol:y1})-(\ref{eqsol:y10}) that $y_{1},...,y_{10}$
are continuous. Recall that they are those solutions of the ordinary
differential equations (\ref{eq:y8})-(\ref{eq:y7}) that satisfy
the boundary conditions listed in (H4) for $t=0$. In addition, by
Remark \ref{equivalency}, equation $F(L_{2},K,\varepsilon)=L_{2}$
and (\ref{L3})-(\ref{theta3}) imply that the equations (\ref{sol:bk1})-(\ref{sol:bk10})
are true. This means that $y_{1},...,y_{10}$ satisfy the right-end
boundary conditions given in (H4). Summing up, (H4) is satisfied.

It remains to show (H3) and (H5).

As $K-1-\varepsilon$ is positive for $(L_{2},K,\varepsilon)\in V$,
we see from (\ref{eqsol:y1}) that $y_{1}$ strictly monotone increases
on $[0,L_{1}]$. As $y_{1}(0)=1+\varepsilon$, it follows that $y_{1}(t)>1+\varepsilon$
for $t\in(1,L_{1}]$. Thus $\theta_{1}=y_{1}(L_{1})>1+\varepsilon$.
Using Proposition \ref{prop:Fsorfejtes}.(iii), we obtain that $y_{2}(t)$,
$y_{3}(t)$ and $y_{4}(t)$ are greater than $1+\varepsilon$ for
all $t$ if $\varepsilon>0$ is small enough. It follows immediately
that $\theta_{2}=y_{2}\left(L_{2}\right)$, $\theta_{3}=y_{3}\left(L_{3}\right)$
and $\theta_{4}=y_{4}\left(L_{4}\right)$ are greater than $1+\varepsilon$
if $\varepsilon>0$ is small enough. It is clear from (\ref{eqsol:y5})
that $y_{5}$ is strictly monotone decreasing. As $y_{5}(L_{5})=1+\varepsilon$,
we deduce that $y_{5}(t)>1+\varepsilon$ for $t\in[0,L_{5})$. Summing
up the results of this paragraph, $\theta_{i}>1+\varepsilon$ for
$i\in\{1,2,3,4\}$, and $y_{i}(t)>1+\varepsilon$ for all $t$ in
the interior of the domain of $y_{i}$, where $i\in\{1,2,\ldots,5\}$.

By (\ref{eqsol:y6}) and (\ref{eqsol:y8}), $y_{6}$ and $y_{8}$
are strictly monotone decreasing on their whole domains. Differentiating
(\ref{eqsol:y7}) with respect to $t$ and using that $\theta_{5}>0$
by (\ref{theta5}), we conclude that 
\[
\dot{y}_{7}(t)=-\theta_{5}e^{-t}-(1+K)te^{-t}<0\quad\mbox{for }t\in[0,L_{3}],
\]
i.e., $y_{7}$ is also strictly monotone decreasing on $[0,L_{3}]$.
Hence 
\[
1+\varepsilon=y_{6}\left(0\right)>y_{6}(L_{2})=\theta_{5}=y_{7}(0)>y_{7}(L_{3})=\theta_{6}=y_{8}\left(0\right)>y_{8}\left(L_{4}\right)=1,
\]
and $y_{i}(s)\in(1,1+\varepsilon)$ for all $s$ in the interior of
the domain of $y_{i}$, where $i\in\{6,7,8\}$.

From (\ref{eqsol:y9}) and (\ref{eqsol:y10}) we conclude that $y_{9}$
and $y_{10}$ are strictly monotone decreasing on $\left[0,L_{2}+L_{5}\right]$
and on $[0,L_{3}]$, respectively. As we already know that $y_{9}$
and $y_{10}$ fulfill the boundary conditions given in (H4), it is
obvious that $y_{9}(t)\in(-1,1)$ for all $t\in(0,L_{2}+L_{5})$,
and $y_{10}(t)\in(-1-\varepsilon,-1)$ for all $t\in(0,L_{3})$.

We have verified (H3) and (H5), and hence the proof is complete.\end{proof}
\begin{cor}
\label{p is a sol}Under the assumptions of the previous proposition,
the $2\omega$-periodic function $p$, determined by (\ref{p:egyenletek})-(\ref{p:periodikus}),
satisfies the delay equation (\ref{Eq}) on $\mathbb{R}$. In addition,
the map 
\[
V\ni(L_{2},K,\varepsilon)\mapsto p_{0}\in C
\]
is continuous.\end{cor}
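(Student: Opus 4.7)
The plan is to bring together Proposition~\ref{prop:H1H2stf} with the equivalences established in Section~2. First I would invoke Proposition~\ref{prop:H1H2stf} to obtain (H1)--(H5). By Remark~\ref{equivalency}, the fixed point equation $F(L_{2},K,\varepsilon)=L_{2}$ together with the definitions \eqref{L3}--\eqref{theta3} is equivalent to the full system (H2), \eqref{sol:bk1}--\eqref{sol:bk10}. Since each $y_{i}$ from (H7) is the unique solution of the corresponding linear ODE in \eqref{eq:y8}--\eqref{eq:y7} with the prescribed left boundary value from (H4), and since \eqref{sol:bk1}--\eqref{sol:bk10} are exactly the right boundary values required by (H4), the concatenation formula \eqref{p:egyenletek} produces a well-defined continuous function $p$ on $[-1,-1+\omega]$. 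The antisymmetric extension \eqref{p:periodikus} and the subsequent $2\omega$-periodic extension then yield a continuous function $p\colon\mathbb{R}\to\mathbb{R}$.

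Retracing the derivation of Section~2 --- whose case-by-case computations on $[0,\tau_{1}]$, $[\tau_{1},\tau_{2}]$, $[\tau_{2},\tau_{3}]$, $[\tau_{3},\omega]$ were all phrased as equivalences under (H1)--(H5), (H7) --- I conclude that $p$ satisfies \eqref{eq:1} on $[0,\omega]$. To extend \eqref{Eq} to the whole real line, I would use the antisymmetry \eqref{p:periodikus} together with the oddness of $f_{K}$. For $t\in[\omega,2\omega]$, differentiating $p(t)=-p(t-\omega)$ and applying \eqref{eq:1} on $[0,\omega]$ yields
\[
\dot{p}(t)=-\dot{p}(t-\omega)=p(t-\omega)-f_{K}(p(t-\omega-1))=-p(t)-f_{K}(-p(t-1))=-p(t)+f_{K}(p(t-1)),
\]
where I invoked the antisymmetry twice (once for $p(t-\omega)=-p(t)$ and once for $p(t-\omega-1)=-p(t-1)$) and the oddness of $f_{K}$ to reach the last equality. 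Combined with $2\omega$-periodicity of $p$, this establishes \eqref{Eq} on all of $\mathbb{R}$.

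For the continuity assertion, I would observe that \eqref{L3}--\eqref{theta3} define $L_{1},L_{3},L_{4},L_{5}$ and $\theta_{1},\ldots,\theta_{6}$ as continuous functions of $(L_{2},K,\varepsilon)\in V$, so the functions $y_{1},\ldots,y_{10}$ given by \eqref{eqsol:y1}--\eqref{eqsol:y10} and the breakpoints $\tau_{1},\tau_{2},\tau_{3},\omega$ defined in \eqref{tau  omega} also depend continuously on the parameters. A short computation using \eqref{tau  omega} and (H2) gives $2\omega=1+L_{2}+L_{3}+L_{4}+L_{5}>1$, so the segment $[-1,0]$ lies inside $[-1,-1+2\omega]$ and $p_{0}\in C$ is obtained from the continuous concatenation on $[-1,-1+\omega]$ and its continuous antisymmetric reflection on $[-1+\omega,-1+2\omega]$; the resulting map $V\ni(L_{2},K,\varepsilon)\mapsto p_{0}\in C$ is therefore continuous. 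No step presents a serious obstacle: the most delicate point is the bookkeeping in the antisymmetry argument at $t=\omega$, where one must apply \eqref{p:periodikus} both to the state $p(t-\omega)$ and to the delayed argument $p(t-\omega-1)$ and then absorb the minus sign into $f_{K}$ via oddness, rather than overlooking either use.
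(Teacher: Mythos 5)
Your proposal is correct and follows essentially the same route as the paper's proof: invoke Proposition \ref{prop:H1H2stf} to secure (H1)--(H5), use the Section~2 equivalence of \eqref{eq:1} with the ODE system \eqref{eq:y8}--\eqref{eq:y7} (which hold by (H7)), extend to $\mathbb{R}$ via the antisymmetry \eqref{p:periodikus} and the oddness of $f_{K}$, and deduce continuity of $(L_{2},K,\varepsilon)\mapsto p_{0}$ from the continuity of the $L_{i}$, $\theta_{i}$ and the formulas \eqref{eqsol:y1}--\eqref{eqsol:y10}. You merely spell out the antisymmetry computation and the identity $2\omega=1+L_{2}+L_{3}+L_{4}+L_{5}$, which the paper leaves to the reader.
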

\begin{proof}
As $p(t)=-p(t-\omega)$ for all real $t$, and the nonlinearity $f_{K}$
is odd, it is enough to guarantee \eqref{eq:1} to prove that $p$ is a solution on $\mathbb{R}$. By Proposition \ref{prop:H1H2stf},
the properties listed in (H1)-(H5) are true. We have already pointed
out in Section 2 that -- under hypotheses (H1)-(H5)-- equation (\ref{eq:1})
is equivalent to the ordinary equations
\begin{itemize}
\item \eqref{eq:y8}-\eqref{eq:y1} on $[0,\tau_{1}]$,
\item \eqref{eq:y2}-\eqref{eq:y4} on $[\tau_{1},\tau_{2}]$,
\item \eqref{eq:y5}-\eqref{eq:y6} on $[\tau_{2},\tau_{3}]$,
\item and \eqref{eq:y7} on $[\tau_{3},\omega]$.
\end{itemize}
By assumption (H7), (\ref{eq:y8})-(\ref{eq:y7}) hold. So \eqref{eq:1}
is satisfied too.

Recall the observation that under hypothesis (H6), $L_{i}$, $i\in\{1,3,4,5\}$,
and $\theta_{i},$ $1\leq i\leq6$, are continuous functions of $(L_{2},K,\varepsilon)$
on $V$. From this, from the definitions (\ref{eqsol:y1})-(\ref{eqsol:y10})
of $y_{1},\ldots,y_{10}$ and from (\ref{p:egyenletek})-(\ref{p:periodikus})
one can deduce in a straightforward way that the initial function $p_0$ varies continuously with $(L_{2},K,\varepsilon)$. We leave the details to the reader.
\end{proof}

\section{The saddle-node bifurcation of $F_{\varepsilon}$}

For $\varepsilon\in(0,1)$, let 
\[
U_{\varepsilon}=\left(-\varepsilon,\varepsilon\right)\times(6.5,7)
\]
and define 
\[
F_{\varepsilon}:U_{\varepsilon}\ni(L_{2},K)\mapsto F\left(L_{2},K,\varepsilon\right)\in\mathbb{R}.
\]
Appendix A of this paper calculates certain partial derivatives
of $F_{\varepsilon}$ and shows that $\partial F_{\varepsilon}/\partial K$
and $\partial^{2}F_{\varepsilon}/\partial L_{2}^{2}$ are both continuous
on $U_{\varepsilon}$. One can actually show that $F_{\varepsilon}\in C^{2}(U_{\varepsilon})$.
We omit the complete proof of this claim.

In this section we consider $\varepsilon$ to be a fixed and sufficiently
small positive number and show that $F_{\varepsilon}$ undergoes a
saddle-node bifurcation as $K$ increases. 

The first two propositions show that if $L_{2}\in(-\varepsilon,\varepsilon)$,
then the equation $F_{\varepsilon}(L_{2},K)=L_{2}$ can be solved
for $K.$
\begin{prop}
\label{prop:iftseged} If $\varepsilon>0$ is sufficiently small,
then there exists $K_{\varepsilon}\in(6.5,7)$ so that $F_{\varepsilon}\left(0,K_{\varepsilon}\right)=0$. \end{prop}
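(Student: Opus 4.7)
The plan is to reduce the statement to an application of the intermediate value theorem after taking the pointwise (in fact uniform) limit of $F_\varepsilon(0,K)$ as $\varepsilon\to 0^+$.

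First I would substitute $L_2=0$ into the definition of $F$ and analyze each term. By Proposition \ref{prop:Fsorfejtes}(i), $\frac{K}{\varepsilon}(K+1)(1-(1-L_4)e^{L_4})=O(\varepsilon)$, and $\theta_6=1+O(\varepsilon)$; by (ii), $\theta_3\to\theta^*(\bar K)$ as $K\to\bar K\in[6.5,7]$ and $\varepsilon\to 0^+$. Since $e^{-L_2}=1$ at $L_2=0$, the third term satisfies $(1+\varepsilon)(K+\theta_6)/(K-1)\to (K+1)/(K-1)$. All $O$-bounds from Proposition \ref{prop:Fsorfejtes} are uniform in $K\in[6.5,7]$, so I obtain
\[
F_\varepsilon(0,K)\longrightarrow G_0(K):=\theta^*(K)-\frac{K+1}{K-1}\qquad\text{as }\varepsilon\to 0^+,
\]
uniformly on $[6.5,7]$. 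Note $G_0$ is continuous there.

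Next I would verify that $G_0(K_0)=0$, where $K_0\in(6.5,7)$ is the unique solution of \eqref{ke}. Indeed, $G_0(K)=0$ is equivalent to
\[
\frac{K^2-2K-1}{K-1}=\sqrt{\frac{(K+1)^3}{e(K-1)}},
\]
and since $K^2-2K-1>0$ for $K\in[6.5,7]$, squaring yields exactly \eqref{ke}.

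Then I would show $G_0(6.5)<0<G_0(7)$ by a direct (but crude) numerical estimate, using the bound $(K+1)^3/(K-1)^3<e$ established in \eqref{estimate}: at $K=6.5$ one checks $6.5-\sqrt{(7.5)^3/(5.5\,e)}-(7.5/5.5)<0$, while at $K=7$ one checks $7-\sqrt{8^3/(6e)}-(8/6)>0$; both follow from elementary rational-arithmetic bounds on $e$. Uniform convergence then yields, for all sufficiently small $\varepsilon>0$,
\[
F_\varepsilon(0,6.5)<0<F_\varepsilon(0,7).
\]
Since $K\mapsto F_\varepsilon(0,K)$ is continuous on $[6.5,7]$ (the map $F$ is continuous on $U$), the intermediate value theorem produces a $K_\varepsilon\in(6.5,7)$ with $F_\varepsilon(0,K_\varepsilon)=0$.

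The only nontrivial steps are checking that the sign information at the endpoints is strict; everything else is bookkeeping from Proposition \ref{prop:Fsorfejtes} and the algebraic identification of $G_0=0$ with \eqref{ke}. I expect the sign verification at the endpoints to be the main (though purely computational) obstacle, and it can be handled with the same kind of rational bound on $e$ used in \eqref{estimate}.
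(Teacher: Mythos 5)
Your proposal is correct and follows essentially the same route as the paper: pass to the limit $\lim_{\varepsilon\to0^+}F_\varepsilon(0,K)=K-\sqrt{(K+1)^3/(e(K-1))}-(K+1)/(K-1)$ via Proposition \ref{prop:Fsorfejtes}, check that this limit is negative at $K=6.5$ and positive at $K=7$, and apply the intermediate value theorem to $K\mapsto F_\varepsilon(0,K)$ for small fixed $\varepsilon$. The only cosmetic difference is that the paper first converts the sign question to that of $w(K)=e-(K+1)^3(K-1)/(K^2-2K-1)^2$ to avoid square roots, whereas you square the inequality directly at the endpoints; both reduce to the same rational bounds on $e$.
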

\begin{proof}
Using Proposition \ref{prop:Fsorfejtes}, we obtain that for any $K\in\left(6.5,7\right)$,
\begin{equation}
\lim_{\varepsilon\rightarrow0^{+}}F_{\varepsilon}(0,K)=K-\sqrt{\frac{(K+1)^{3}}{e(K-1)}}-\frac{K+1}{K-1}.\label{limit of F}
\end{equation}
One can show by elementary calculations that the sign of \eqref{limit of F}
is the same as the sign of 
\[
w\left(K\right)=e-\frac{(K+1)^{3}(K-1)}{(K^{2}-2K-1)^{2}},
\]
which expression is slightly easier to handle. As 
\[
w(6.5)=e-\frac{37125}{12769}<e-\frac{36400}{13000}=e-\frac{28}{10}<0
\]
and 
\[
w(7)=e-\frac{768}{289}>e-\frac{768}{288}=e-\frac{8}{3}>0,
\]
one sees that $\lim_{\varepsilon\rightarrow0^{+}}{F_{\varepsilon}(0,6.5)}<0$
and $\lim_{\varepsilon\rightarrow0^{+}}{F_{\varepsilon}(0,7)}>0.$
Hence, if $\varepsilon>0$ is small enough, then $F_{\varepsilon}(0,6.5)<0$
and $F_{\varepsilon}(0,7)>0$. As $(6.5,7)\ni K\mapsto F_{\varepsilon}(0,K)\in\mathbb{R}$
is continuous, the existence of $K_{\varepsilon}$ follows from the
intermediate value theorem. \end{proof}
\begin{prop}
\label{prop:ift} For all sufficiently small positive $\varepsilon$
and $L_{2}\in(-\varepsilon,\varepsilon)$, the equation $F_{\varepsilon}(L_{2},K)=L_{2}$
has a unique solution $K$ in $(6.5,7)$. Furthermore, this solution
can be given as $K=\varphi_{\varepsilon}(L_{2})$, where $\varphi_{\varepsilon}:(-\varepsilon,\varepsilon)\rightarrow(6.5,7)$
is continuous and $\varphi_{\varepsilon}(0)=K_{\varepsilon}$. \end{prop}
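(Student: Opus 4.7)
The plan is to solve $F_\varepsilon(L_2,K) = L_2$ for $K$ by combining the intermediate value theorem with a monotonicity argument in $K$. Set $G_\varepsilon(L_2,K) = F_\varepsilon(L_2,K) - L_2$. For each fixed $L_2 \in (-\varepsilon,\varepsilon)$, I will show that $K \mapsto G_\varepsilon(L_2,K)$ is strictly monotone on $(6.5,7)$ and takes values of opposite sign at the endpoints, for all sufficiently small $\varepsilon > 0$.

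For the endpoint signs, I would strengthen the pointwise computation in the proof of Proposition~\ref{prop:iftseged}. That proof gives $\lim_{\varepsilon \to 0^+} F_\varepsilon(0,6.5) < 0$ and $\lim_{\varepsilon \to 0^+} F_\varepsilon(0,7) > 0$. Inspecting the estimates in Proposition~\ref{prop:Fsorfejtes}, the convergences $\theta_3 \to \theta^*(\bar K)$, $L_4 = O(\varepsilon)$ etc.\ are uniform in $L_2 \in (-\varepsilon,\varepsilon)$, so in fact $\lim_{\varepsilon \to 0^+} F_\varepsilon(L_2,K) = K - \sqrt{(K+1)^3 / (e(K-1))} - (K+1)/(K-1)$ uniformly in $L_2 \in (-\varepsilon,\varepsilon)$. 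Combined with $|L_2| < \varepsilon \to 0$, this yields $G_\varepsilon(L_2,6.5) < 0 < G_\varepsilon(L_2,7)$ for every $L_2 \in (-\varepsilon,\varepsilon)$ once $\varepsilon$ is small enough. The IVT then supplies at least one solution $K \in (6.5,7)$.

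The main obstacle is the uniqueness part, which rests on showing that $\partial F_\varepsilon / \partial K$ is bounded away from $1$ (say, strictly less than $1$, or strictly positive in a way that makes $\partial G_\varepsilon/\partial K$ sign-definite) on all of $U_\varepsilon$ for sufficiently small $\varepsilon > 0$. This is precisely the role of the Appendix's calculation of $\partial F_\varepsilon/\partial K$: I expect it to produce a leading-order expansion in $\varepsilon$ whose principal term has a definite sign independent of $(L_2,K) \in U_\varepsilon$, and I would invoke this estimate here. Once the monotonicity of $K \mapsto G_\varepsilon(L_2,K)$ is established, uniqueness of the root is immediate, and we define $\varphi_\varepsilon(L_2)$ to be this unique root.

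Continuity of $\varphi_\varepsilon$ then follows either from the implicit function theorem applied locally at each $(L_2,\varphi_\varepsilon(L_2))$, using $F_\varepsilon \in C^2(U_\varepsilon)$ and $\partial G_\varepsilon/\partial K \neq 0$, or by a direct sequential argument: if $L_{2,n} \to L_2$ in $(-\varepsilon,\varepsilon)$ and $K_n = \varphi_\varepsilon(L_{2,n})$, then any accumulation point $K^* \in [6.5,7]$ of $(K_n)$ satisfies $F_\varepsilon(L_2,K^*) = L_2$ by continuity of $F_\varepsilon$; the strict sign inequalities above force $K^* \in (6.5,7)$, and uniqueness gives $K^* = \varphi_\varepsilon(L_2)$, so the entire sequence converges. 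The identity $\varphi_\varepsilon(0) = K_\varepsilon$ is immediate from the definition of $K_\varepsilon$ in Proposition~\ref{prop:iftseged} and the uniqueness just proved.
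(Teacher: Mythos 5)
Your proposal is correct in outline but follows a genuinely different route from the paper. The paper does not use IVT plus monotonicity over the whole interval $(6.5,7)$; instead it sets up a Newton-type iteration $T_{\varepsilon,L_2}(K)=K-A^{-1}G_\varepsilon(L_2,K)$ with $A=\partial F_\varepsilon(0,K_\varepsilon)/\partial K$ and applies the uniform contraction principle on a small ball $[-\eta_1,\eta_1]$ around $K_\varepsilon$ (a quantitative implicit function theorem), which delivers existence, uniqueness and continuity of $\psi_\varepsilon=\varphi_\varepsilon-K_\varepsilon$ simultaneously from Hale's parametrized fixed-point theorems; the price is that the argument is local in $K$. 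Your approach buys the global statement directly, but it rests on one estimate you only ``expect'': that the $\varepsilon$-independent principal term of $\partial F_\varepsilon/\partial K$ is sign-definite \emph{uniformly for all} $K\in[6.5,7]$. The Appendix (Corollary \ref{cor:Kszerintiderivlathatarertek}) verifies positivity only at the single point $K_0$, using the identity \eqref{ke} to eliminate $e^{-1/2}$, so you must supply the uniform version yourself. Fortunately it is elementary: writing $r=\sqrt{(K+1)/(K-1)}$, the middle term of \eqref{der of F wr K} is $e^{-1/2}\bigl(\tfrac{3}{2}r-\tfrac{1}{2}r^3\bigr)$, and $\tfrac{3}{2}r-\tfrac{1}{2}r^3\le 1$ for all $r\ge 0$ since $r^3-3r+2=(r-1)^2(r+2)\ge 0$; hence the principal term is at least $1-e^{-1/2}>0$ for every $K\in[6.5,7]$, and the uniform $O(\varepsilon)$ remainder (the paper's $O$ is by convention uniform in $L_2$ and $K$) makes $K\mapsto G_\varepsilon(L_2,K)$ strictly increasing for small $\varepsilon$. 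Two minor points to tidy: the endpoints $K=6.5,7$ are not in the domain $U_\varepsilon$, so evaluate near them (the uniform expansion $F_\varepsilon(L_2,K)=h(K)+O(\varepsilon)$ with $h(6.5)<0<h(7)$ keeps the roots in a compact subinterval, which also rescues the sequential continuity argument); and the uniformity in $L_2$ of the limit in Proposition \ref{prop:Fsorfejtes} should be stated explicitly, though it does follow from the proofs there.
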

\begin{proof}
Let $J_{\varepsilon}=\left(6.5-K_{\varepsilon},7-K_{\varepsilon}\right)$
and introduce the map 
\[
G_{\varepsilon}:(-\varepsilon,\varepsilon)\times J_{\varepsilon}\ni(L_{2},K)\mapsto F_{\varepsilon}(L_{2},K+K_{\varepsilon})-L_{2}\in\mathbb{R}.
\]
Then $G_{\varepsilon}(0,0)=0$ and $G$ is continuously differentiable
(see Propositions \ref{prop:Kderivalt} and \ref{prop:L2 szerinti derivalt}
in  Appendix A). We look for the solution $K$ of $G_{\varepsilon}(L_{2},K)=0$
for any small $\varepsilon>0$ and for any $L_{2}\in(-\varepsilon,\varepsilon)$. 

Let 
\[
A:=\frac{\partial G_{\varepsilon}(0,0)}{\partial K}=\frac{\partial F_{\varepsilon}(0,K_{\varepsilon})}{\partial K}.
\]
By Corollary {\ref{cor:Kszerintiderivlathatarertek}},
we may assume that $A$ is nonzero. 

Finding a solution $K$ to $G_{\varepsilon}(L_{2},K)=0$ is equivalent
to finding a fixed point of $T_{\varepsilon,L_{2}}$, where 
\[
T_{\varepsilon,L_{2}}:J_{\varepsilon}\ni K\mapsto K-A^{-1}G_{\varepsilon}(L_{2},K)\in\mathbb{R}.
\]

Choose a constant $q\in\left(0,1\right)$ independent from $K,\varepsilon$
and $L_{2}$. We claim that $T_{\varepsilon,L_{2}}$ is a uniform
contraction on an appropriate subset of $J_{\varepsilon}$: if $\eta>0$
is small enough, then $[-\eta,\eta]\subseteq J_{\varepsilon}$, 
\begin{equation}
|T_{\varepsilon,L_{2}}(K)|\leq\eta\mbox{ for }K\in[-\eta,\eta],\label{biz25}
\end{equation}
and 
\begin{equation}
|T_{\varepsilon,L_{2}}(K_{1})-T_{\varepsilon,L_{2}}(K_{2})|<q|K_{1}-K_{2}|\mbox{ for }K_{1},K_{2}\in[-\eta,\eta].\label{biz26}
\end{equation}
 Set $\eta>0$ so small that $[-\eta,\eta]\subseteq J_{\varepsilon}$.
Using Lagrange's mean value theorem, we get that for $K_{1},K_{2}\in[-\eta,\eta],$
\begin{align}
\left|T_{\varepsilon,L_{2}}(K_{1})-T_{\varepsilon,L_{2}}(K_{2})\right| & \leqslant\sup_{\left|\bar{K}\right|<\eta}\left|1-A^{-1}\frac{\partial G_{\varepsilon}(L_{2},\bar{K})}{\partial K}\right|\left|K_{1}-K_{2}\right|.\label{biz27}
\end{align}
We see from Proposition \ref{prop:Kderivalt} that 
\begin{align}
\frac{\partial G_{\varepsilon}(L_{2},\bar{K})}{\partial K} & =1-e^{-\frac{1}{2}}\left(\frac{3}{2}\sqrt{\frac{K_{\varepsilon}+\bar{K}+1}{K_{\varepsilon}+\bar{K}-1}}-\frac{1}{2}\sqrt{\left(\frac{K_{\varepsilon}+\bar{K}+1}{K_{\varepsilon}+\bar{K}-1}\right)^{3}}\right)\label{biz28}\\
 & \quad+\frac{2}{(K_{\varepsilon}+\bar{K}-1)^{2}}+O(\varepsilon).\notag
\end{align}
Therefore there exist $\varepsilon_{0}>0$ and $\eta_{0}>0$ such
that if $\varepsilon\in(0,\varepsilon_{0})$, $L_{2}\in(-\varepsilon,\varepsilon)$
and $\bar{K}\in(-\eta_{0},\eta_{0})$, then 
\[
\left|1-A^{-1}\frac{\partial G_{\varepsilon}(L_{2},\bar{K})}{\partial K}\right|<q,
\]
that is, (\ref{biz26}) is satisfied for any $\varepsilon\in(0,\varepsilon_{0})$,
$L_{2}\in(-\varepsilon,\varepsilon)$ and $\eta\in\left(0,\eta_{0}\right)$.
Next, using the Taylor expansion of $G_{\varepsilon}$, we obtain
that 
\[
T_{\varepsilon,L_{2}}(K)=K-A^{-1}G_{\varepsilon}(L_{2},K)=-A^{-1}\left(\frac{\partial G_{\varepsilon}(0,0)}{\partial L_{2}}L_{2}+O\left(L_{2}^{2}+K^{2}\right)\right)
\]
as $L_{2}\rightarrow0$ and $K\rightarrow0$. In consequence, if $|K|\leq\eta<\eta_{0}$
and $|L_{2}|<\varepsilon<\varepsilon_{0}$, then 
\[
\left|T_{\varepsilon,L_{2}}(K)\right|<|A|^{-1}\left|\frac{\partial G_{\varepsilon}(0,0)}{\partial L_{2}}\right|\varepsilon+C\left(\varepsilon^{2}+\eta^{2}\right)
\]
with some constant $C>0$. Fix $\eta_{1}<\eta_{0}$ so small that
$C\eta_{1}^{2}<\eta_{1}/2$. Now set $\varepsilon_{1}<\varepsilon_{0}$
such that 
\[
|A|^{-1}\left|\frac{\partial G_{\varepsilon}(0,0)}{\partial L_{2}}\right|\varepsilon+C\varepsilon^{2}<\frac{\eta_{1}}{2}
\]
for all $\varepsilon\in(0,\varepsilon_{1})$. Then (\ref{biz25})
holds for $\eta=\eta_{1}$, $\varepsilon\in(0,\varepsilon_{1})$ and
$L_{2}\in(-\varepsilon,\varepsilon)$.

Summing up, we conclude that $T_{\varepsilon,L_{2}}$ is a uniform
contraction from $[-\eta_{1},\eta_{1}]$ to $[-\eta_{1},\eta_{1}]$
for all $\varepsilon\in(0,\varepsilon_{1})$ and $L_{2}\in(-\varepsilon,\varepsilon)$.
By the Banach fixed point theorem, $T_{\varepsilon,L_{2}}$ has a
unique fixed point $\psi_{\varepsilon}(L_{2})$ in $[-\eta_{1},\eta_{1}]$,
see Theorem $3.1$ of Chapter $0$ in \cite{Hale}. Since $L_{2}\mapsto T_{\varepsilon,L_{2}}(K)$
is continuous for each $K$, it follows from Theorem 3.2 of Chapter
$0$ in \cite{Hale} that $\psi_{\varepsilon}$ is continuous in $L_{2}$.
It is clear that $\psi_{\varepsilon}(0)=0$. Set $\varphi_{\varepsilon}(K):=K_{\varepsilon}+\psi_{\varepsilon}(K).$ 
\end{proof}
Now we are ready to verify the saddle-node bifurcation of the fixed
points of $F_{\varepsilon}$.
\begin{prop}
\label{prop:bifurcation} For all sufficiently small positive $\varepsilon$,
one can give $K^{*}=K^{*}(\varepsilon)\in\left(6.5,7\right)$ and
$L_{2}^{*}=L_{2}^{*}(\varepsilon)\in(0,\widehat{L}_{2}(K,\varepsilon))$
such that $F_{\varepsilon}$ undergoes a saddle-node bifurcation at
$(L_{2}^{*},K^{*})$: there exist a neighborhood $\mathcal{U}$ of
$L_{2}^{*}$ in $(0,\widehat{L}_{2}(K^{*},\varepsilon))$ and a constant
$\delta_{1}>0$ such that 
\begin{itemize}
\item the map $F_{\varepsilon}(\cdot,K)$ has no fixed point in $\mathcal{U}$
for $K\in(K^{*}-\delta_{1},K^{*})$, 
\item $L_{2}^{*}$ is the unique fixed point of $F_{\varepsilon}\left(\cdot,K^{*}\right)$
in $\mathcal{U}$, 
\item $F_{\varepsilon}(\cdot,K)$ has exactly two fixed points in $\mathcal{U}$
for $K\in(K^{*},K^{*}+\delta_{1})$, and both fixed points converge to $L_{2}^{*}$ as $K\rightarrow K^{*}$. 
\end{itemize}
\end{prop}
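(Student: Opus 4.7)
The plan is to recast Proposition 4.3 as a statement about the implicitly defined curve $K=\varphi_{\varepsilon}(L_{2})$ of fixed points supplied by Proposition 4.2. Since every fixed point of $F_{\varepsilon}(\cdot,K)$ in $(-\varepsilon,\varepsilon)$ lies on this graph, a saddle-node bifurcation at $(L_{2}^{*},K^{*})$ with two solution branches appearing for $K>K^{*}$ is equivalent to $\varphi_{\varepsilon}$ attaining a strict local minimum at an interior point $L_{2}^{*}\in(0,\widehat{L}_{2}(K^{*},\varepsilon))$, and then setting $K^{*}:=\varphi_{\varepsilon}(L_{2}^{*})$.

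First I would upgrade the regularity of $\varphi_{\varepsilon}$. Appendix A establishes $F_{\varepsilon}\in C^{2}(U_{\varepsilon})$ together with $\partial F_{\varepsilon}/\partial K\neq 0$ in a neighborhood of $(0,K_{\varepsilon})$ (Corollary to Proposition A.1), so the implicit function theorem lifts the continuity from Proposition 4.2 to $C^{2}$ smoothness and yields
\[
\varphi_{\varepsilon}'(L_{2})\;=\;\frac{1-\partial_{L_{2}}F_{\varepsilon}(L_{2},\varphi_{\varepsilon}(L_{2}))}{\partial_{K}F_{\varepsilon}(L_{2},\varphi_{\varepsilon}(L_{2}))}.
\]
A zero of $\varphi_{\varepsilon}'$ at $L_{2}^{*}$ is therefore precisely the neutrality condition $\partial_{L_{2}}F_{\varepsilon}(L_{2}^{*},K^{*})=1$ that we want at the bifurcation point.

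Next, to locate such a zero inside $(0,\widehat{L}_{2})$, I would evaluate $1-\partial_{L_{2}}F_{\varepsilon}(L_{2},\varphi_{\varepsilon}(L_{2}))$ at the two endpoints $L_{2}=0$ and $L_{2}=\widehat{L}_{2}$ and show that these values have opposite signs for all sufficiently small $\varepsilon>0$. This should follow from the explicit partial derivative formulas in Appendix A, combined with the asymptotic identities $L_{3},L_{4},\widehat{L}_{2}=O(\varepsilon)$, $K_{\varepsilon}\to K_{0}$, and $\theta_{i}\to\theta^{*}(K_{0})$ obtained in Section 3 and Proposition 4.1. The intermediate value theorem then produces $L_{2}^{*}\in(0,\widehat{L}_{2})$ with $\varphi_{\varepsilon}'(L_{2}^{*})=0$, and I set $K^{*}:=\varphi_{\varepsilon}(L_{2}^{*})$. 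Differentiating the implicit relation once more and evaluating at $L_{2}^{*}$ gives
\[
\varphi_{\varepsilon}''(L_{2}^{*})\;=\;-\,\frac{\partial_{L_{2}}^{2}F_{\varepsilon}(L_{2}^{*},K^{*})}{\partial_{K}F_{\varepsilon}(L_{2}^{*},K^{*})},
\]
and Propositions A.1 and A.3 in Appendix A pin down the signs of numerator and denominator so that $\varphi_{\varepsilon}''(L_{2}^{*})>0$, making $L_{2}^{*}$ a strict local minimum of $\varphi_{\varepsilon}$.

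Once this picture is in place, the three conclusions of Proposition 4.3 follow by reading off horizontal intersections of the graph of $\varphi_{\varepsilon}$ restricted to a small neighborhood $\mathcal{U}$ of $L_{2}^{*}$ (chosen so that $\mathcal{U}\subset(0,\widehat{L}_{2}(K,\varepsilon))$ for all $K$ within some $\delta_{1}$ of $K^{*}$): no intersection when $K<K^{*}$, the single point $\{L_{2}^{*}\}$ when $K=K^{*}$, and exactly two intersections, both converging to $L_{2}^{*}$ as $K\to K^{*}$, when $K>K^{*}$. The main obstacle I anticipate is the sign determination of $1-\partial_{L_{2}}F_{\varepsilon}$ at the endpoints of the $O(\varepsilon)$-thin interval $(0,\widehat{L}_{2})$: because the defining formula for $F_{\varepsilon}$ carries $1/\varepsilon$ prefactors, these evaluations are genuine higher-order $\varepsilon$-expansions rather than soft continuity computations, and the analogous sign check for $\partial_{L_{2}}^{2}F_{\varepsilon}/\partial_{K}F_{\varepsilon}$ at $(L_{2}^{*},K^{*})$ likewise rests on the delicate calculations carried out in Appendix A.
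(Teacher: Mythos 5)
Your proposal is correct and follows essentially the same route as the paper: both locate $L_{2}^{*}$ by applying the intermediate value theorem to $L_{2}\mapsto\partial_{L_{2}}F_{\varepsilon}(L_{2},\varphi_{\varepsilon}(L_{2}))$ along the curve $\varphi_{\varepsilon}$ of Proposition \ref{prop:ift}, using the endpoint sign information of Corollary \ref{cor:L2derivaltjta} and the sign conditions on $\partial_{K}F_{\varepsilon}$ and $\partial_{L_{2}}^{2}F_{\varepsilon}$ from Corollary \ref{cor:Kszerintiderivlathatarertek} and Proposition \ref{prop:L2L2 szerinti derivalt}. The only difference is in the final packaging: the paper verifies the four normal-form conditions and invokes the saddle-node theorem for maps (Wiggins, Section 21.1A), whereas you read the fixed-point count off the strict local minimum of $\varphi_{\varepsilon}$ directly --- which is sound, though it tacitly relies on the same $C^{2}$ smoothness of $F_{\varepsilon}$ that the paper asserts without complete proof.
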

\begin{proof}
By Section 21.1A in \cite{Wiggins}, $F_{\varepsilon}$ undergoes
a saddle-node bifurcation at $(L_{2}^{*},K^{*})$ if 
\begin{align*}
F_{\varepsilon}(L_{2}^{*},K^{*}) & =L_{2}^{*},\\
\frac{\partial}{\partial L_{2}}F_{\varepsilon}(L_{2}^{*},K^{*}) & =1,\\
\frac{\partial}{\partial K}F_{\varepsilon}(L_{2}^{*},K^{*}) & \neq0,\\
\frac{\partial^{2}}{\partial L_{2}^{2}}F_{\varepsilon}(L_{2}^{*},K^{*}) & \neq0.
\end{align*}
Furthermore, if 
\[
\frac{\frac{\partial^{2}}{\partial L_{2}^{2}}F_{\varepsilon}(L_{2}^{*},K^{*})}{\frac{\partial}{\partial K}F_{\varepsilon}(L_{2}^{*},K^{*})}<0,
\]
then the fixed points of $F_{\varepsilon}$ appear for $K\geq K^{*}$. 

For all small enough positive $\varepsilon$, Proposition \ref{prop:ift}
gives a continuous map $\varphi_{\varepsilon}:(-\varepsilon,\varepsilon)\rightarrow(6.5,7)$
such that 
\[
F_{\varepsilon}\left(L_{2},\varphi_{\varepsilon}(L_{2})\right)=L_{2}\mbox{ for all }L_{2}\in(-\varepsilon,\varepsilon).
\]
It is clear from Corollary \ref{cor:L2derivaltjta} that if $\varepsilon>0$
is sufficiently small, then 
\[
\frac{\partial}{\partial L_{2}}F_{\varepsilon}(0,\varphi_{\varepsilon}(0))>1\quad\mbox{and}\quad\frac{\partial}{\partial L_{2}}F_{\varepsilon}\left(\widehat{L}_{2},\varphi_{\varepsilon}\left(\widehat{L}_{2}\right)\right)<1.
\]
As $F_{\varepsilon}$ is continuously differentiable with respect
to $L_{2}$, and $\varphi_{\epsilon}$ is continuous, it is clear
that 
\[
(-\varepsilon,\varepsilon)\ni L_{2}\mapsto\frac{\partial}{\partial L_{2}}F_{\varepsilon}(L_{2},\varphi_{\varepsilon}(L_{2}))\in\mathbb{R}
\]
is also continuous. It follows from the intermediate value theorem
that there exists $L_{2}^{*}\in(0,\widehat{L}_{2})$ such that 
\[
\frac{\partial}{\partial L_{2}}F_{\varepsilon}\left(L_{2}^{*},\varphi_{\varepsilon}\left(L_{2}^{*}\right)\right)=1.
\]
Let $K^{*}:=\varphi_{\varepsilon}\left(L_{2}^{*}\right)\in(6.5,7).$

We see from Corollary \ref{cor:Kszerintiderivlathatarertek} and from
Proposition \ref{prop:L2L2 szerinti derivalt} that we may assume
that 
\[
\frac{\partial}{\partial K}F_{\varepsilon}\left(L_{2}^{*},K^{*}\right)>0\quad\mbox{and}\quad\frac{\partial^{2}}{\partial L_{2}^{2}}F_{\varepsilon}\left(L_{2}^{*},K^{*}\right)<0.
\]

Hence $F_{\varepsilon}$ undergoes a saddle-node bifurcation at $(L_{2}^{*},K^{*})$,
and the fixed points appear for $K\geq K^{*}$.
\end{proof}

\section{The delay equation has no other types of periodic solutions locally}

In this section choose $\varepsilon>0$ so small that Proposition
\ref{prop:bifurcation} holds, i.e., $F_{\varepsilon}$ undergoes
a saddle-node bifurcation at $(L_{2}^{*}(\varepsilon),K^{*}(\varepsilon))$,
where $(L_{2}^{*}(\varepsilon),K^{*}(\varepsilon),\varepsilon)\in V$. 

From now on, let $p\colon\mathbb{R}\rightarrow\mathbb{R}$ denote that periodic solution that is given by Corollary
\ref{p is a sol} specially for $(L_{2}^{*}(\varepsilon),K^{*}(\varepsilon),\varepsilon)$.
Then $p$ is the concatenation of certain auxiliary functions $y_{1},\ldots,y_{10}$
as in \eqref{p:egyenletek}-\eqref{p:periodikus}, and its minimal
period is $2\omega$. The functions $y_{1},\ldots,y_{10}$ satisfy
(H1)-(H5) with some parameters $L_{i}>0$, $i\in\{1,2,...,5\}$, and
$\theta_{i}$, $i\in\left\{ 1,\ldots,6\right\} $.

In order to complete the proof of the main theorem, it remains to
verify that all periodic solutions of the delay equation \eqref{Eq}
derive from fixed points of $F$ - at least locally, in an open ball
centered at $p_{0}$. 

First let us recall the results of Propositions 5.1 and 5.2 in \cite{Krisztin-Vas}.
\begin{prop}
\label{prop:monotonicity property} Suppose that $\bar{p}:\mathbb{R}\rightarrow\mathbb{R}$
is an arbitrary periodic solution of \eqref{Eq} with minimal period
$2\bar{\omega}$.\\
(i) If $t_{0}\in\mathbb{R}$ and $t_{1}\in\left(t_{0},t_{0}+2\bar{\omega}\right)$
are chosen so that $\bar{p}\left(t_{0}\right)=\min_{t\in\mathbb{R}}\bar{p}\left(t\right)$
and $\bar{p}\left(t_{1}\right)=\max_{t\in\mathbb{R}}\bar{p}\left(t\right)$,
then $\bar{p}$ is monotone nondecreasing on $\left(t_{0},t_{1}\right)$
and monotone nonincreasing on $\left(t_{1},t_{0}+2\bar{\omega}\right)$.\\
(ii) If $0\in\bar{p}(\mathbb{R})$, then $\bar{p}(t)=-\bar{p}(t-\bar{\omega})$
for all real $t$.
\end{prop}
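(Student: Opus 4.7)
My plan is to establish (i) via a discrete Lyapunov (sign-change) argument for positive-feedback delay equations, and to derive (ii) from (i) combined with the oddness of $f_{K}$ and forward uniqueness of the IVP for \eqref{Eq}.

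For (i), I would invoke the Mallet--Paret sign-change functional $V$, which counts sign changes of a segment in $C$ and is nonincreasing along solutions of any linear delay equation of the form $\dot{u}(t)=-u(t)+b(t)u(t-1)$ with coefficient $b\ge 0$. Differentiating \eqref{Eq} where permitted, the function $u:=\dot{\bar{p}}$ satisfies, at all times with $\bar{p}(t-1)\notin\{\pm 1,\pm(1+\varepsilon)\}$, the variational equation $\dot{u}(t)=-u(t)+f_{K}'(\bar{p}(t-1))\,u(t-1)$, whose delayed coefficient $f_{K}'$ is nonnegative; a standard approximation argument extends the monotonicity of $V$ through the discrete exceptional times. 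Since $u$ is $2\bar{\omega}$-periodic and $V(u_{t})$ is integer-valued and nonincreasing, it is constant along the orbit. On the other hand $u(t_{0})=u(t_{1})=0$ forces $V(u_{t})\ge 2$, and minimality of the period together with $V\equiv 2$ allows $u$ to change sign only at $t_{0}$ and $t_{1}$. Hence $u$ has constant sign on each of $(t_{0},t_{1})$ and $(t_{1},t_{0}+2\bar{\omega})$, which is the asserted monotonicity.

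For (ii), since $f_{K}$ is odd, the function $q(t):=-\bar{p}(t-\bar{\omega})$ is itself a $2\bar{\omega}$-periodic solution of \eqref{Eq}, and showing $q\equiv\bar{p}$ is equivalent to (ii). Under the hypothesis $0\in\bar{p}(\mathbb{R})$ and by (i), $\bar{p}$ has exactly two zeros per period, an \emph{up-zero} $s_{1}\in(t_{0},t_{1})$ and a \emph{down-zero} $s_{2}\in(t_{1},t_{0}+2\bar{\omega})$. Define $\tilde{p}(t):=-\bar{p}(t+(s_{2}-s_{1}))$; by oddness this is a $2\bar{\omega}$-periodic solution, and it has an up-zero precisely at $s_{1}$. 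If I can establish the segment identity $\tilde{p}_{s_{1}}=\bar{p}_{s_{1}}$ in $C$, then forward uniqueness of the IVP gives $\tilde{p}\equiv\bar{p}$ on $[s_{1},\infty)$, and periodicity extends the equality to all of $\mathbb{R}$. The resulting identity $\bar{p}(t)=-\bar{p}(t+(s_{2}-s_{1}))$ shows that $2(s_{2}-s_{1})$ is a period of $\bar{p}$, and minimality forces $s_{2}-s_{1}=\bar{\omega}$, which is precisely (ii).

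The main obstacle I anticipate is proving the segment identity $\tilde{p}_{s_{1}}=\bar{p}_{s_{1}}$. My approach combines (i) applied to both $\bar{p}$ and $\tilde{p}$ with the piecewise-affine structure of $f_{K}$: between the successive times at which $\bar{p}(t-1)$ crosses $\{\pm 1,\pm(1+\varepsilon)\}$ the equation reduces to an explicit linear ODE, exactly as exploited in Section~2. The values of $\bar{p}$ on $[s_{1}-1,s_{1}]$ are therefore determined algebraically by the preceding extremal data together with the positions of the preceding extrema and the previous down-zero $s_{2}-2\bar{\omega}$. A step-by-step matching on successive branches of $f_{K}$, using that negation swaps maxima with minima while $2\bar{\omega}$-periodicity fixes their relative positions (so in particular $\min\bar{p}=-\max\bar{p}$), shows that $\tilde{p}$ satisfies the same algebraic constraints with the same extremal data as $\bar{p}$, forcing the segments to coincide.
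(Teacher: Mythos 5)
First, note that the paper does not prove this proposition at all: it is imported verbatim from Propositions 5.1 and 5.2 of \cite{Krisztin-Vas} (the text preceding the statement says ``let us recall the results of Propositions 5.1 and 5.2 in \cite{Krisztin-Vas}''). So you are attempting to reprove a cited external result, which is legitimate, but your argument has genuine gaps in both parts.

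In (i), the discrete Lyapunov functional is indeed the natural tool, but two steps do not hold as written. The claim that ``$u(t_{0})=u(t_{1})=0$ forces $V(u_{t})\ge 2$'' confuses zeros with sign changes; $V^{+}$ counts sign changes on a window of length $1$, and isolated zeros contribute nothing. (The correct route is that $u=\dot{\bar{p}}$ is nonconstant, periodic and of mean zero, hence takes both signs, which eventually forces $V^{+}\ge 2$.) More seriously, the inference from ``$V^{+}(u_{t})\equiv 2$'' to ``$u$ changes sign only at $t_{0}$ and $t_{1}$'' is not justified: $V^{+}$ bounds sign changes only on windows of length $1$, while the minimal period $2\bar{\omega}$ of an \emph{arbitrary} periodic solution is not a priori smaller than $2$, so a constant value $V^{+}\equiv 2$ is compatible with more than two sign changes per period. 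You would need either a bound on $2\bar{\omega}$ plus a pigeonhole count, or a finer argument (e.g., applying $V^{+}$ to the shifted differences $\bar{p}(\cdot)-\bar{p}(\cdot-\tau)$), to localize the sign changes at $t_{0}$ and $t_{1}$. Also, the exceptional set where $\bar{p}(t-1)\in\{\pm 1,\pm(1+\varepsilon)\}$ need not be discrete, so the ``standard approximation argument'' deserves more care, although this is a lesser issue.

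Part (ii) is where the proof is genuinely incomplete. The entire content of the statement is concentrated in the segment identity $\tilde{p}_{s_{1}}=\bar{p}_{s_{1}}$, which you explicitly defer, and the sketch you offer for it is circular: you invoke ``$\min\bar{p}=-\max\bar{p}$'' as if it followed from $2\bar{\omega}$-periodicity, but this equality is itself (a consequence of) the symmetry $\bar{p}(t)=-\bar{p}(t-\bar{\omega})$ that you are trying to establish. Periodicity alone gives no relation between $\min\bar{p}$ and $-\max\bar{p}$. Without an independent proof that the extremal data of $\bar{p}$ and of $-\bar{p}(\cdot+(s_{2}-s_{1}))$ agree, the ``step-by-step matching on branches of $f_{K}$'' cannot get started. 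The actual proof of this symmetry in \cite{Krisztin-Vas} again relies on the discrete Lyapunov functional applied to the difference of the two solutions $\bar{p}$ and $-\bar{p}(\cdot+c)$ together with a zero-counting argument; some such global input is unavoidable here, and your proposal does not supply it.
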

The main result of this section is the following.
\begin{prop}
\label{prop:no other per sol}Let $\bar{p}:\mathbb{R}\rightarrow\mathbb{R}$
be a periodic solution of \eqref{Eq} for some parameter $\bar{K}$
with minimal period $2\bar{\omega}$. If $\left|\bar{K}-K^{*}\left(\varepsilon\right)\right|$
and $\left\Vert \bar{p}_{0}-p_{0}\right\Vert $ are small enough and
$\bar{p}\left(-1\right)=1+\varepsilon$, then one can give parameters
$\bar{L}_{i}>0$, $i\in\{1,2,...,5\}$, $\bar{\theta}_{i}$, $i\in\left\{ 1,\ldots,6\right\} $,
and continuous functions $\bar{y}_{i}$, $i\in\left\{ 1,\ldots,10\right\} $,
such that (H1)-(H5) hold, and $\bar{p}$ is the concatenation of $\bar{y}_{1},\ldots,\bar{y}_{10}$
as follows:
\begin{align}
\bar{p}(t-1) & =\bar{y}_{1}(t)\quad\mbox{for }t\in\left[0,\bar{L}_{1}\right],\notag\label{bar p:egyenletek}\\
\bar{p}\left(t-1+\bar{L}_{1}\right) & =\bar{y}_{2}(t)\quad\mbox{for }t\in\left[0,\bar{L}_{2}\right],\notag\\
\bar{p}\left(t-1+\bar{L}_{1}+\bar{L}_{2}\right) & =\bar{y}_{3}(t)\quad\mbox{for }t\in\left[0,\bar{L}_{3}\right],\notag\\
\bar{p}\left(t-1+\bar{L}_{1}+\bar{L}_{2}+\bar{L}_{3}\right) & =\bar{y}_{4}(t)\quad\mbox{for }t\in\left[0,\bar{L}_{4}\right],\notag\\
\bar{p}\left(t-1+\bar{L}_{1}+\bar{L}_{2}+\bar{L}_{3}+\bar{L}_{4}\right) & =\bar{y}_{5}(t)\quad\mbox{for }t\in\left[0,\bar{L}_{5}\right],\\
\bar{p}\left(t-1+\bar{\tau}_{1}\right) & =\bar{y}_{6}(t)\quad\mbox{for }t\in\left[0,\bar{L}_{2}\right],\notag\\
\bar{p}\left(t-1+\bar{\tau}_{1}+\bar{L}_{2}\right) & =\bar{y}_{7}(t)\quad\mbox{for }t\in\left[0,\bar{L}_{3}\right],\notag\\
\bar{p}\left(t-1+\bar{\tau}_{1}+\bar{L}_{2}+\bar{L}_{3}\right) & =\bar{y}_{8}(t)\quad\mbox{for }t\in\left[0,\bar{L}_{4}\right],\notag\\
\bar{p}\left(t-1+\bar{\tau}_{2}\right) & =\bar{y}_{9}(t)\quad\mbox{for }t\in\left[0,\bar{L}_{2}+\bar{L}_{5}\right],\notag\\
\bar{p}\left(t-1+\bar{\tau}_{3}\right) & =\bar{y}_{10}(t)\quad\mbox{for }t\in\left[0,\bar{L}_{3}\right],\notag
\end{align}
where 
\begin{equation}
\bar{\tau}_{1}=\sum_{i=1}^{5}{\bar{L}_{i}},\ \bar{\tau}_{2}=\bar{\tau}_{1}+\bar{L}_{2}+\bar{L}_{3}+\bar{L}_{4},\ \bar{\tau}_{3}=\bar{\tau}_{2}+\bar{L}_{2}+\bar{L}_{5}\ \mbox{and}\ \bar{\omega}=\bar{\tau}_{3}+\bar{L}_{3}.\label{bar tau}
\end{equation}
 In addition, 
\begin{equation}
\bar{p}(t)=-\bar{p}(t-\bar{\omega})\quad\mbox{for all }t\in\mathbb{R}.\label{symmetry property}
\end{equation}
In consequence, $\bar{L}_{2}$ is the fixed point of $\left(0,\varepsilon\right)\ni L_{2}\mapsto F\left(L_{2},K,\varepsilon\right)\in\mathbb{R}$. \end{prop}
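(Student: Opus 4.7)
The plan is to transplant the decomposition of $p$ to $\bar{p}$ by continuous dependence together with the monotonicity lemma, and then invoke Proposition~\ref{prop:per sol fixed point}. First, by continuous dependence of solutions of \eqref{Eq} on initial data and on the parameter $K$, smallness of $\|\bar{p}_0-p_0\|$ and $|\bar{K}-K^*(\varepsilon)|$ yields uniform closeness of $\bar{p}$ to $p$ on, say, $[-1,2\omega+1]$. In particular $\bar{p}$ oscillates about both equilibria $\chi_\pm$, so $0\in\bar{p}(\mathbb{R})$, and Proposition~\ref{prop:monotonicity property}(ii) gives the symmetry $\bar{p}(t)=-\bar{p}(t-\bar{\omega})$ for all $t\in\mathbb{R}$, which is \eqref{symmetry property}.

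Next, by Proposition~\ref{prop:monotonicity property}(i) together with the closeness of $\bar{p}$ to $p$, the trajectory $\bar{p}$ crosses each of $\pm 1$ and $\pm(1+\varepsilon)$ transversally, exactly twice per period, at times close to the corresponding crossings of $p$. Starting from the prescribed crossing $\bar{p}(-1)=1+\varepsilon$ (ascending), I would define $\bar{L}_1,\ldots,\bar{L}_5$ from the consecutive crossings of $\bar{p}(\cdot-2)$ through the thresholds $1+\varepsilon,\ 1,\ -1,\ -1-\varepsilon$---these being the transitions of the ODE driving each $\bar{y}_i$---in exact analogy with the construction of $L_1,\ldots,L_5$ in Section~2. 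The segments $\bar{y}_1,\ldots,\bar{y}_{10}$ are then the restrictions of $\bar{p}$ dictated by \eqref{bar p:egyenletek}, and the $\bar{\theta}_i$ are their endpoint values. By construction each $\bar{L}_i$ and $\bar{\theta}_i$ is close to its counterpart for $p$.

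Verification of (H1)--(H5) proceeds as follows. Positivity (H1), the strict ordering (H3), and the open-interior strict inequalities (H5) transfer from the corresponding strict inequalities for $p$ by the above closeness; (H4) holds by construction. The identity (H2), $2\bar{L}_1+5\bar{L}_2+5\bar{L}_3+3\bar{L}_4+3\bar{L}_5=1$, is the only subtle point: using $\bar{\omega}=\bar{L}_1+3\bar{L}_2+3\bar{L}_3+2\bar{L}_4+2\bar{L}_5$, it is algebraically equivalent to $\bar{L}_1+2\bar{\omega}=1+\bar{\tau}_1$. This last relation is exactly the statement that the breakpoint of the decomposition at $\bar{p}$-time $-1+\bar{L}_1$, where the double-delayed value $\bar{p}(-2+\bar{L}_1)$ must equal $1+\varepsilon$ (descending), is matched via the $2\bar{\omega}$-periodicity to the main descending crossing of $\bar{p}=1+\varepsilon$ at $\bar{p}$-time $-1+\bar{\tau}_1$. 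Hence (H2) is automatic once the decomposition is defined intrinsically by the threshold crossings. With (H1)--(H5) established, Proposition~\ref{prop:per sol fixed point} applies and yields $\bar{L}_2\in(0,\varepsilon)$ and $F(\bar{L}_2,\bar{K},\varepsilon)=\bar{L}_2$.

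The main obstacle is a careful bookkeeping of (H2) via the periodic structure, together with ensuring the nondegeneracy (transversality) of the threshold crossings used to define the $\bar{L}_i$'s. Transversality is inherited from the strict monotonicity of each $y_i$ of $p$ (as in the proof of Proposition~\ref{prop:H1H2stf}) by uniform closeness. The identity (H2) encodes the compatibility of the delay-$1$ structure with the half-period symmetry \eqref{symmetry property}, and its verification amounts to tracing the breakpoints of the $\bar{y}_i$-decomposition through the periodic extension of $\bar{p}$.
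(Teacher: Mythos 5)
Your overall strategy coincides with the paper's: decompose $\bar{p}$ along its threshold crossings at $\pm 1$ and $\pm(1+\varepsilon)$, use the smallness of $\left\Vert \bar{p}_{0}-p_{0}\right\Vert$ together with Proposition~\ref{prop:monotonicity property} to show the decomposition has the same combinatorial structure as that of $p$, verify (H1)--(H5), and invoke Proposition~\ref{prop:per sol fixed point}. The one genuinely different ingredient is your treatment of (H2): you observe that (H2) is algebraically equivalent to $\bar{L}_{1}+2\bar{\omega}=1+\bar{\tau}_{1}$, i.e.\ to the statement that the descending crossings of the level $1+\varepsilon$ at times $-2+\bar{L}_{1}$ and $-1+\bar{\tau}_{1}$ are exactly one period $2\bar{\omega}$ apart; since there is exactly one such (transversal) crossing per period and the two times differ by an amount close to $2\bar{\omega}$, the difference must equal $2\bar{\omega}$. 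This is correct and is slicker than the paper's argument, which instead shows that $\bar{p}$ obeys $\dot{\bar{p}}=-\bar{p}+K$ on $[-1+\bar{L},-1+\bar{L}+\bar{\tau}_{1}]$ with $\bar{L}=2\bar{L}_{1}+5\bar{L}_{2}+5\bar{L}_{3}+3\bar{L}_{4}+3\bar{L}_{5}$ and then pins down $\bar{L}=1$ from the sign pattern of $\bar{p}$.

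Two points need more care than your sketch provides. First, your (H2) argument already uses the relations $\bar{\tau}_{1}=\sum_{i}\bar{L}_{i}$, $\bar{\tau}_{2}=\bar{\tau}_{1}+\bar{L}_{2}+\bar{L}_{3}+\bar{L}_{4}$ and $\bar{\omega}=\bar{\tau}_{3}+\bar{L}_{3}$ from \eqref{bar tau}; these are not definitions but compatibility conditions between crossings of $\bar{p}(\cdot-1)$ and of $\bar{p}(\cdot-2)$, and they must be proved. The paper does this (Steps 4--7) by repeatedly exploiting the consequence of Proposition~\ref{prop:monotonicity property}(i) that any two intervals on which $\bar{p}$ decreases between the same pair of values have equal length; some version of this argument is unavoidable in your plan as well. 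Second, "(H5) transfers by closeness" fails near the endpoints of the subintervals, where $p$ itself attains the boundary values (e.g.\ $y_{1}(0)=1+\varepsilon$, $y_{9}(0)=1$), so $C^{0}$-closeness cannot yield the strict interior inequalities there; as in the paper (and in Proposition~\ref{prop:H1H2stf}), one must note that each $\bar{y}_{i}$ solves an explicit linear ODE on its interval and is therefore strictly monotone, which combined with the boundary values gives (H5). With these two points filled in, your proof is complete.
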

\begin{proof}
As $\left\Vert \bar{p}_{0}-p_{0}\right\Vert $ is small, we may assume that $0\in\bar{p}(\mathbb{R})$. So the symmetry property \eqref{symmetry property} holds by Proposition \ref{prop:monotonicity property}.(ii), and it suffices to investigate $\bar{p}$ on $[-1,-1+\bar{\omega}]$.
We prove the theorem by explicitly determining the auxiliary functions
$\bar{y}_{1},\ldots,\bar{y}_{10}$, the parameters $\bar{L}_{1},\ldots\bar{L}_{5}$
as the lengths of their domains, and the parameters $\bar{\theta}_{1},\ldots,\bar{\theta}_{6}$
as their boundary values.

1. One can easily prove that $\left|\bar{\omega}-\omega\right|$ is
arbitrary small provided $\left|\bar{K}-K^{*}\left(\varepsilon\right)\right|$
and $\left\Vert \bar{p}_{0}-p_{0}\right\Vert $ are small enough.
Hence, by the smallness of $\left\Vert \bar{p}_{0}-p_{0}\right\Vert $
and by the continuity of the solution operator in forward time, one
can achieve that $\left\Vert \bar{p}_{-1+2\bar{\omega}}-p_{-1+2\omega}\right\Vert $
is arbitrary small too. By periodicity, this means that $\left\Vert \bar{p}_{-1}-p_{-1}\right\Vert $
is arbitrary small. As we shall see, this property is of key role.

It is also straightforward to obtain the subsequent properties of
$p_{-1}$ (shown Fig.~\ref{fig:ketgrafikon}) by using (H2), the
equations \eqref{p:egyenletek} and the fact that $p(t)=-p(t-\omega)$
for all $t\in\mathbb{R}$: 
\begin{equation}
p(t)>1+\varepsilon\mbox{ for }t\in[-2,-2+L_{1})\quad\mbox{and}\quad p\left(-2+L_{1}\right)=1+\varepsilon,\label{property1}
\end{equation}
\begin{multline}
p(t)\in\left(-1,1\right)\ \mbox{for }t\in\left(-2+\tau_{1}-L_{5},-2+\tau_{1}+L_{2}\right),\\
p(-2+\tau_{1}-L_{5})=1\ \mbox{and\ }p\left(-2+\tau_{1}+L_{2}\right)=-1,\label{property5}
\end{multline}
\begin{multline}
p(t)<-1-\varepsilon\ \mbox{for }t\in(-2+\tau_{2}-L_{4},-2+\omega+L_{1})\ \mbox{and}\\
p(-2+\tau_{2}-L_{4})=-1-\varepsilon.\label{property5-1}
\end{multline}

\begin{figure}[b]
\centering{}\centering \includegraphics[angle=90,width=0.75\textwidth]{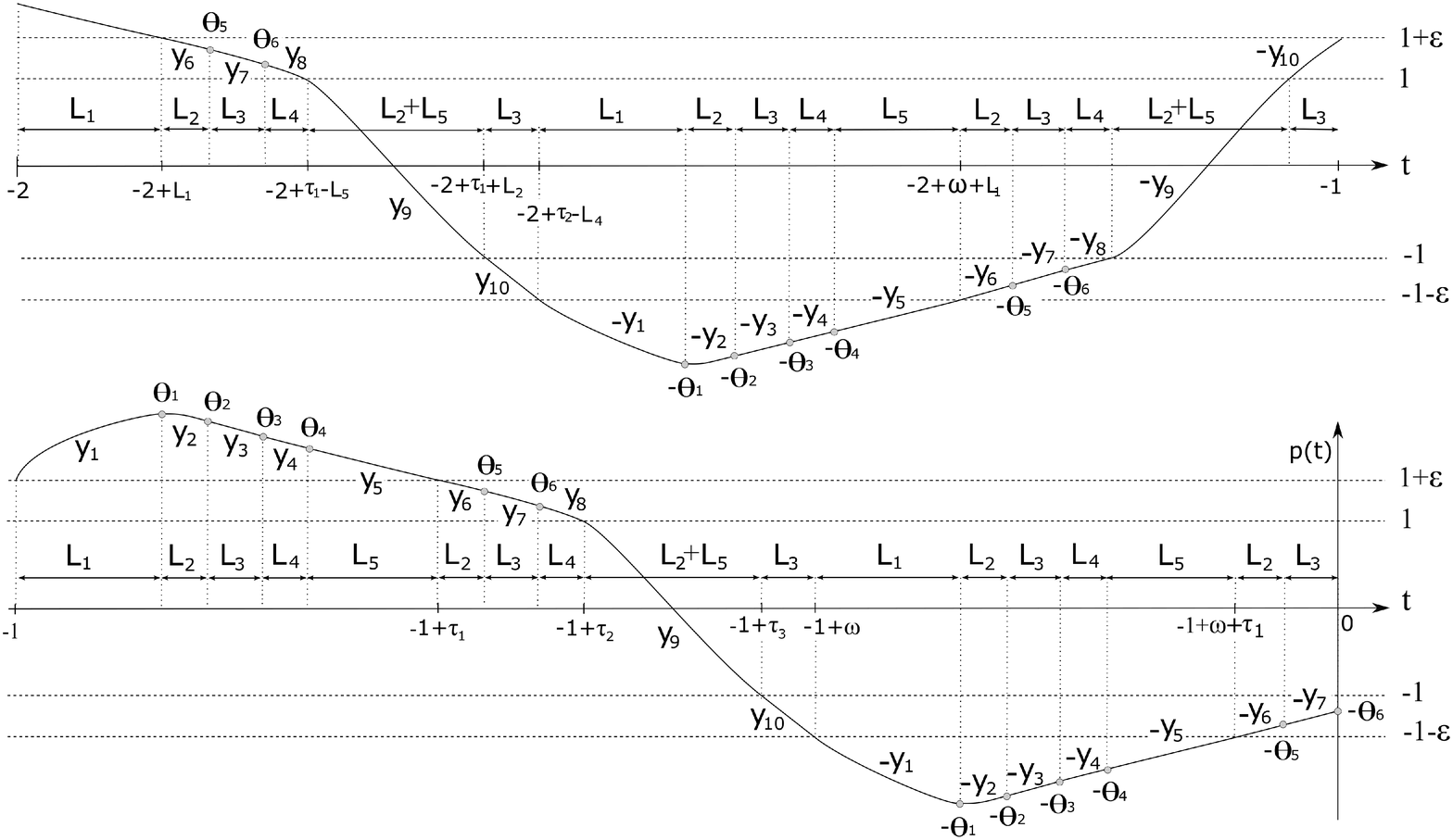}
\caption{The plot of p on {[}-2,-1{]} and on {[}-1,0{]}.}
\label{fig:ketgrafikon} 
\end{figure}

2. \emph{The parameters $\bar{L}_{1},$ $\bar{\theta}_{1}$ and the
function $\bar{y}_{1}$.} Since $\left\Vert \bar{p}_{-1}-p_{-1}\right\Vert $
is arbitrarily small and \eqref{property1} holds, one can give $\bar{L}_{1}>0$
arbitrarily close to $L_{1}$ such that 
\begin{equation}
\bar{p}(t)>1+\varepsilon\mbox{ for }t\in[-2,-2+\bar{L}_{1})\quad\mbox{and}\quad\bar{p}\left(-2+\bar{L}_{1}\right)=1+\varepsilon.\label{barp0}
\end{equation}
Define $\bar{y}_{1}\in C\left([0,\bar{L}_{1}],\mathbb{R}\right)$
by $\bar{y}_{1}\left(t\right)=\bar{p}\left(t-1\right)$ for all $t\in[0,\bar{L}_{1}]$.
As $\bar{p}\left(-1\right)=1+\varepsilon$, it is clear that $\bar{y}_{1}\left(0\right)=1+\varepsilon.$
Set $\bar{\theta}_{1}=\bar{y}_{1}(\bar{L}_{1})=\bar{p}(-1+\bar{L}_{1})$.
Then $\bar{y}_{1}$ obviously fulfills the conditions in (H4). Also
note that 
\[
\dot{\bar{y}}_{1}\left(t\right)=\dot{\bar{p}}\left(t-1\right)=-\bar{p}\left(t-1\right)+f_{K}\left(\bar{p}(t-2)\right)=-\bar{y}_{1}\left(t\right)+K\quad\mbox{for }t\in\left[0,\bar{L}_{1}\right].
\]
Considering the solution of this linear equation, it is clear that
$\bar{y}_{1}$ and $\bar{\theta}_{1}$ satisfy the conditions in (H3)
and (H5): $\bar{\theta}_{1}>1+\varepsilon$ and $\bar{y}_{1}(t)>1+\varepsilon$
for all $t\in(0,\bar{L}_{1})$.

3. At this point we do not have enough information to determine $\bar{y}_{2}$,
$\bar{y}_{3}$ or $\bar{y}_{4}$. Next we define \emph{$\bar{y}_{5}$,
$\bar{y}_{6}$ and those parameters that are related to them (namely,
$\bar{L}_{2},\bar{L}_{5},\bar{\theta}_{4}$ and $\bar{\theta}_{5}$).}
Recall that $p\left(-1+\tau_{1}\right)=1+\varepsilon$. Under the
assumptions of the proposition, one can give a minimal $\bar{\tau}_{1}>0$
(arbitrarily close to $\tau_{1}$) such that $\bar{p}(-1+\bar{\tau}_{1})=1+\varepsilon$.
By \eqref{property5} and by the convergence of $\bar{p}_{-1}$ to
$p_{-1}$, we may assume the existence of $\bar{L}_{5}>0$ and $\bar{L}_{2}>0$
(arbitrarily close to $L_{5}$ and $L_{2}$, respectively) such that
\begin{equation}
\bar{p}(t)\in\left(-1,1\right)\quad\mbox{for }t\in\left(-2+\bar{\tau}_{1}-\bar{L}_{5},-2+\bar{\tau}_{1}+\bar{L}_{2}\right).\label{barp1}
\end{equation}
Choose $\bar{L}_{2}$ and $\bar{L}_{5}$ such that the time interval
in \eqref{barp1} is maximal: 
\begin{equation}
\bar{p}\left(-2+\bar{\tau}_{1}-\bar{L}_{5}\right)=1\quad\mbox{and}\quad\bar{p}\left(-2+\bar{\tau}_{1}+\bar{L}_{2}\right)=-1.\label{barp2}
\end{equation}
This means that 
\[
\dot{\bar{p}}(t)=-\bar{p}(t)\mbox{ for }t\in\left(-1+\bar{\tau}_{1}-\bar{L}_{5},-1+\bar{\tau}_{1}+\bar{L}_{2}\right).
\]
As $p$ is positive on $(-1+\tau_{1}-L_{5},-1+\tau_{1}+L_{2})$, we
may assume that $\bar{p}$ is positive on $(-1+\bar{\tau}_{1}-\bar{L}_{5},-1+\bar{\tau}_{1}+\bar{L}_{2})$,
and hence we deduce from the last ordinary differential equation that
$\bar{p}$ strictly decreases on this interval. Set $\bar{y}_{5}\in C\left([0,\bar{L}_{5}],\mathbb{R}\right)$
and $\bar{y}_{6}\in C\left([0,\bar{L}_{2}],\mathbb{R}\right)$ by
\[
\bar{y}_{5}\left(t\right)=\bar{p}\left(t-1+\bar{\tau}_{1}-\bar{L}_{5}\right)\quad\mbox{for all }t\in\left[0,\bar{L}_{5}\right],
\]
and 
\[
\bar{y}_{6}\left(t\right)=\bar{p}\left(t-1+\bar{\tau}_{1}\right)\quad\mbox{for all }t\in\left[0,\bar{L}_{2}\right].
\]
Then $\bar{y}_{5}$ and $\bar{y}_{6}$ are strictly decreasing functions
with $\bar{y}_{5}(\bar{L}_{5})=\bar{y}_{6}\left(0\right)=\bar{p}(-1+\bar{\tau}_{1})=1+\varepsilon$.
Let 
\begin{equation}
\bar{\theta}_{4}=\bar{y}_{5}(0)=\bar{p}\left(-1+\bar{\tau}_{1}-\bar{L}_{5}\right)\quad\mbox{and}\quad\bar{\theta}_{5}=\bar{y}_{6}\left(\bar{L}_{2}\right)=\bar{p}\left(-1+\bar{\tau}_{1}+\bar{L}_{2}\right),\label{barp3}
\end{equation}
so that $\bar{y}_{5}$ and $\bar{y}_{6}$ satisfy the conditions in
(H4). With these choices, $\bar{\theta}_{4}$ and $\bar{\theta}_{5}$
are arbitrarily close to $\theta_{4}=p(-1+\tau_{1}-L_{5})$ and $\theta_{5}=p(-1+\tau_{1}+L_{2})$,
respectively, and hence one can achieve that $\bar{\theta}_{4}>1+\varepsilon$
and $\bar{\theta}_{5}\in(1,1+\varepsilon)$ -- as required by (H3).
The monotonicity of $\bar{y}_{5}$ and $\bar{y}_{6}$ guarantee that
$\bar{y}_{5}$ and $\bar{y}_{6}$ also fulfill the next conditions
in (H5): $\bar{y}_{5}\left(t\right)>1+\varepsilon$ for all $t\in(0,\bar{L}_{5})$
and $\bar{y}_{6}(t)\in(1,1+\varepsilon)$ for all $t\in(0,\bar{L}_{2})$.

4. \emph{The functions $\bar{y}_{8}$, $\bar{y}_{9},$ $\bar{y}_{10}$
and the parameters $\bar{L}_{3},\bar{L}_{4},\bar{\theta}_{6}$.} Let
$\bar{\tau}_{2}\in(-1+\bar{\tau}_{1},-1+\bar{\omega})$ be minimal
with $\bar{p}(-1+\bar{\tau}_{2})=1$. Such $\bar{\tau}_{2}$ exists
because $\bar{p}_{0}$, $\bar{\tau}_{1}$, $\bar{\omega}$ is arbitrarily
close to $p_{0}$, $\tau_{1}$, $\omega$, respectively. Then $\bar{\tau}_{2}$
converges to $\tau_{2}$ as $\bar{p}_{0}$ converges to $p_{0}$. 

The convergence of $\bar{p}_{-1}$ to $p_{-1}$ and property \eqref{property5-1}
ensure the existence of $\bar{L}_{4}>0$ (arbitrarily close to $L_{4}$)
so that 
\begin{equation}
\bar{p}\left(-2+\bar{\tau}_{2}-\bar{L}_{4}\right)=-1-\varepsilon.\label{barp5}
\end{equation}
and Let $\bar{y}_{8}$ be the continuous function on $[0,\bar{L}_{4}]$
given by 
\[
\bar{y}_{8}(t)=\bar{p}\left(t-1+\bar{\tau}_{2}-\bar{L}_{4}\right)\quad\mbox{for all }t\in\left[0,\bar{L}_{4}\right],
\]
and let 
\begin{equation}
\bar{\theta}_{6}=\bar{y}_{8}\left(0\right)=\bar{p}\left(-1+\bar{\tau}_{2}-\bar{L}_{4}\right).\label{barp6}
\end{equation}
Then $\bar{\theta}_{6}$ is arbitrarily close to $\theta_{6}=p(-1+\tau_{2}-L_{4})$,
and therefore we may assume that $\bar{\theta}_{6}\in(1,1+\varepsilon)$
(see again (H3)). At the right-end point of its domain, $\bar{y}_{8}$
takes the value $\bar{y}_{8}\text{(}\bar{L}_{4})=\bar{p}(-1+\bar{\tau}_{2})=1.$ 

Note that we have already defined $\bar{L}_{2}$ and $\bar{L}_{5}$.
Set $\bar{y}_{9}\in C([0,\bar{L}_{2}+\bar{L}_{5}],\mathbb{R})$ so
that 
\[
\bar{y}_{9}(t)=\bar{p}\left(t-1+\bar{\tau}_{2}\right)\quad\mbox{for all }t\in\left[0,\bar{L}_{2}+\bar{L}_{5}\right].
\]
It is clear that $\bar{y}_{9}(0)=\bar{p}(-1+\bar{\tau}_{2})=1.$ Now
recall from \eqref{barp2} that there exists an interval of length
$\bar{L}_{2}+\bar{L}_{5}$ on which $\bar{p}$ decreases from $1$
to $-1$. This fact and Proposition \ref{prop:monotonicity property}.(i)
together imply that if $\bar{p}$ decreases from $1$ to $-1$ on
any subinterval $I$ of $\mathbb{R}$, then the length of $I$ is
$\bar{L}_{2}+\bar{L}_{5}$. Thus $\bar{y}_{9}(\bar{L}_{2}+\bar{L}_{5})=\bar{p}(-1+\bar{\tau}_{2}+\bar{L}_{2}+\bar{L}_{5})=-1.$

Set $\bar{\tau}_{3}=\bar{\tau}_{2}+\bar{L}_{2}+\bar{L}_{5}$. Then,
by our last result, $\bar{p}\left(-1+\bar{\tau}_{3}\right)=-1.$

Fix $\bar{L}_{3}>0$ to be time that $\bar{p}$ needs to decrease
from $-1$ to $-1-\varepsilon$. As we have mentioned in the previous
paragraph, Proposition \ref{prop:monotonicity property}.(i) guarantees
that $\bar{L}_{3}$ is well-defined. Choose $\bar{y}_{10}$ to be
the continuous function on $[0,\bar{L}_{3}]$ defined by 
\[
\bar{y}_{10}(t)=\bar{p}\left(t-1+\bar{\tau}_{3}\right)\quad\mbox{for all }t\in\left[0,\bar{L}_{3}\right].
\]
Then 
\begin{equation}
\bar{y}_{10}(0)=\bar{p}\left(-1+\bar{\tau}_{3}\right)=-1\quad\mbox{and}\quad\bar{y}_{10}\left(\bar{L}_{3}\right)=\bar{p}\left(-1+\bar{\tau}_{3}+\bar{L}_{3}\right)=-1-\varepsilon.\label{barp6.5}
\end{equation}

We have already verified that $\bar{y}_{8}$, $\bar{y}_{9}$ and $\bar{y}_{10}$
fulfill the conditions given in (H4). It remains to show the conditions
listed in (H5): 
\begin{equation}
\bar{y}_{8}\left(t\right)\in(1,1+\varepsilon)\mbox{ for }t\in\left(0,\bar{L}_{4}\right),\quad\bar{y}_{9}\left(t\right)\in\left(-1,1\right)\mbox{ for }t\in\left(0,\bar{L}_{2}+\bar{L}_{5}\right)\label{barp7}
\end{equation}
and 
\begin{equation}
\bar{y}_{10}\left(t\right)\in\left(-1-\varepsilon,-1\right)\mbox{ for }t\in\left(0,\bar{L}_{3}\right).\label{barp8}
\end{equation}
Note that $\bar{\tau}_{3}+\bar{L}_{3}$ is arbitrarily close to $\tau_{3}+L_{3}=\omega$.
Hence, by property \eqref{property5-1}, we may assume that 
\begin{equation}
\bar{p}\left(t\right)<-1-\varepsilon\mbox{ for all }t\mbox{ in }\left(-2+\bar{\tau}_{2}-\bar{L}_{4},-2+\bar{\tau}_{3}+\bar{L}_{3}\right].\label{barp8.5}
\end{equation}
We see from this and from the definitions of $\bar{y}_{8}$, $\bar{y}_{9}$
and $\bar{y}_{10}$ that $\bar{y}_{i}$ is a solution of $\dot{y}=-y-K$
for all $i\in\left\{ 8,9,10\right\} $. Hence the functions $\bar{y}_{8}$,
$\bar{y}_{9}$ and $\bar{y}_{10}$ are strictly decreasing on their
domains. Looking at the boundary values of $\bar{y}_{8}$, $\bar{y}_{9}$
and $\bar{y}_{10}$, it is clear that \eqref{barp7} and \eqref{barp8}
are satisfied.

5. \emph{The function} \emph{$\bar{y}_{7}\in C\left([0,\bar{L}_{3}],\mathbb{R}\right)$.}
By the last step of the proof, if $\bar{p}$ decreases from $-1$
to $-1-\varepsilon$ on an interval $J$, then the length of $J$
is $\bar{L}_{3}$. Now recall from \eqref{barp2} and \eqref{barp5}
that 
\[
\bar{p}\left(-2+\bar{\tau}_{1}+\bar{L}_{2}\right)=-1\quad\mbox{and}\quad\bar{p}\left(-2+\bar{\tau}_{2}-\bar{L}_{4}\right)=-1-\varepsilon.
\]
Hence necessarily 
\begin{equation}
\bar{\tau}_{2}=\bar{\tau}_{1}+\bar{L}_{2}+\bar{L}_{3}+\bar{L}_{4},\label{barp9}
\end{equation}
and the length of $\left[-1+\bar{\tau}_{1}+\bar{L}_{2},-1+\bar{\tau}_{2}-\bar{L}_{4}\right]$
is $\bar{L}_{3}$. Suppose that the function $\bar{y}_{7}\in C\left([0,\bar{L}_{3}],\mathbb{R}\right)$
is defined by 
\[
\bar{y}_{7}\left(t\right)=\bar{p}\left(t-1+\bar{\tau}_{1}+\bar{L}_{2}\right)\quad\mbox{for }t\in\left[0,\bar{L}_{3}\right].
\]
Then $\bar{y}_{7}$ satisfies the boundary conditions in (H4): 
\[
\bar{y}_{7}(0)=\bar{p}\left(-1+\bar{\tau}_{1}+\bar{L}_{2}\right)=\bar{\theta}_{5}\quad\mbox{and}\quad\bar{y}_{7}\left(\bar{L}_{3}\right)=\bar{p}\left(-1+\bar{\tau}_{2}-\bar{L}_{4}\right)=\bar{\theta}_{6},
\]
see \eqref{barp3} and \eqref{barp6}. As $\bar{p}_{0}$ is arbitrarily
close to $p_{0}$, it is clear that $\bar{y}_{7}(t)\in(1,1+\varepsilon)$
for all $t$ in $\left(0,\bar{L}_{3}\right)$ -- as required by (H5).

6. \emph{The functions $\bar{y}_{2}$, $\bar{y}_{3},$ $\bar{y}_{4}$
and the parameters $\bar{\theta}_{2},\bar{\theta}_{3}$.} Recall from
\eqref{barp0} and \eqref{barp2} that 
\[
\bar{p}\left(-2+\bar{L}_{1}\right)=1+\varepsilon\quad\mbox{and}\quad\bar{p}\left(-2+\bar{\tau}_{1}-\bar{L}_{5}\right)=1,
\]
that is, $\bar{p}$ decreases from $1+\varepsilon$ to $1$ on $\left[-2+\bar{L}_{1},-2+\bar{\tau}_{1}-\bar{L}_{5}\right]$.
Recall from the definition of $\bar{\tau}_{1}$ and $\bar{\tau}_{2}$
that $\bar{p}$ decreases from $1+\varepsilon$ to $1$ also on $\left[-1+\bar{\tau}_{1},-1+\bar{\tau}_{2}\right]$.
Necessarily, the length of the interval $\left[-2+\bar{L}_{1},-2+\bar{\tau}_{1}-\bar{L}_{5}\right]$
equals the length of $\left[-1+\bar{\tau}_{1},-1+\bar{\tau}_{2}\right]$,
which is $\bar{L}_{2}+\bar{L}_{3}+\bar{L}_{4}$ by \eqref{barp9}.
In consequence, $\bar{\tau}_{1}=\sum_{i=1}^{5}{\bar{L}_{i}}$ and
the length of $\left[-1+\bar{L}_{1},-1+\bar{\tau}_{1}-\bar{L}_{5}\right]$
is also $\bar{L}_{2}+\bar{L}_{3}+\bar{L}_{4}$. We use this property
to introduce $\bar{y}_{2}\in C([0,\bar{L}_{2}],\mathbb{R})$, $\bar{y}_{3}\in C([0,\bar{L}_{3}],\mathbb{R})$
and $\bar{y}_{4}\in C([0,\bar{L}_{4}],\mathbb{R})$ as restrictions
of $\bar{p}$ to subintervals of $\left[-1+\bar{L}_{1},-1+\bar{\tau}_{1}-\bar{L}_{5}\right]$:
\begin{eqnarray*}
\bar{y}_{2}(t) & = & \bar{p}\left(t-1+\bar{L}_{1}\right)\quad\mbox{for }t\in\left[0,\bar{L}_{2}\right],\\
\bar{y}_{3}(t) & = & \bar{p}\left(t-1+\bar{L}_{1}+\bar{L}_{2}\right)\quad\mbox{for }t\in\left[0,\bar{L}_{3}\right],\\
\bar{y}_{4}(t) & = & \bar{p}\left(t-1+\bar{L}_{1}+\bar{L}_{2}+\bar{L}_{3}\right)\quad\mbox{for }t\in\left[0,\bar{L}_{4}\right].
\end{eqnarray*}
In addition, let
\[
\bar{\theta}_{2}=\bar{y}_{2}\left(\bar{L}_{2}\right)=\bar{y}_{3}(0)\quad\mbox{and}\quad\bar{\theta}_{3}=\bar{y}_{3}\left(\bar{L}_{3}\right)=\bar{y}_{4}(0).
\]
 It is clear that the functions $\bar{y}_{2}$, $\bar{y}_{3},$ $\bar{y}_{4}$
satisfy the boundary conditions required by (H4) because 
\[
\bar{y}_{2}\left(0\right)=\bar{p}\left(-1+\bar{L}_{1}\right)=\bar{y}_{1}\left(\bar{L}_{1}\right)=\bar{\theta}_{1}
\]
(see Step 2 of this proof) and 
\[
\bar{y}_{4}\left(\bar{L}_{4}\right)=\bar{p}\left(-1+\bar{\tau}_{1}-\bar{L}_{5}\right)=\bar{y}_{5}\left(0\right)=\bar{\theta}_{4}
\]
(see \ref{barp3}). Note that $\bar{y}_{i}(t)>1+\varepsilon$ for
all $t$ in the domains of $\bar{y}_{i},i\in\{2,3,4\}$, if and only
if $\bar{p}\left(t\right)>1+\varepsilon$ for $t\in\left[-1+\bar{L}_{1},-1+\bar{\tau}_{1}-\bar{L}_{5}\right]$.
The last inequality holds if $\left\Vert \bar{p}_{0}-p_{0}\right\Vert $
is small enough, as $p\left(t\right)>1+\varepsilon$ for $t\in\left[-1+L_{1},-1+\tau_{1}-L_{5}\right]$.

7. \emph{The proof of the equality $\bar{\omega}=\bar{\tau}_{3}+\bar{L}_{3}$.
}As $\bar{\tau}_{3}$, $\bar{L}_{3}$ and $\bar{\omega}$ are arbitrarily
close to $\tau_{3}$, $L_{3}$ and $\omega=\tau_{3}+L_{3}$, we see
that $\bar{\omega}$ is arbitrarily close to $\bar{\tau}_{3}+\bar{L}_{3}$.
As $\bar{p}\left(-1\right)=1+\varepsilon$, the symmetry property
\eqref{symmetry property} implies that $\bar{p}\left(-1+\bar{\omega}\right)=-1-\varepsilon$.
We have also mentioned that $\bar{p}\left(-1+\bar{\tau}_{3}+\bar{L}_{3}\right)=-1-\varepsilon$,
see \eqref{barp6.5}. If\emph{ }$\bar{\omega}\neq\bar{\tau}_{3}+\bar{L}_{3}$\emph{,}
then there exists $\xi$ between $\bar{\omega}$ and\emph{ }$\bar{\tau}_{3}+\bar{L}_{3}$
such that 
\[
\bar{p}\left(-1+\xi\right)=-1-\varepsilon\quad\mbox{and}\quad\dot{\bar{p}}\left(-1+\xi\right)=0
\]
 (here we use the monotonicity property described in Proposition \ref{prop:monotonicity property}.(i)).
Then necessarily 
\[
f_{\bar{K}}(\bar{p}\left(-2+\xi\right))=\dot{\bar{p}}\left(-1+\xi\right)+\bar{p}\left(-1+\xi\right)=-1-\varepsilon.
\]
This result contradicts the fact that $f_{\bar{K}}(\bar{p}\left(-2+\xi\right))$
is arbitrarily close to 
\[
f_{\bar{K}}\left(\bar{p}\left(-2+\bar{\tau}_{3}+\bar{L}_{3}\right)\right)=-K
\]
(see \eqref{barp8.5}). So $\bar{\omega}=\bar{\tau}_{3}+\bar{L}_{3}$.

Observe that we have verified all the equalities in \eqref{bar tau}.

8. Summing up, $\bar{p}$ is the concatenation of the auxiliary functions
$\bar{y}_{1},\ldots,\bar{y}_{10}$ as given in \eqref{bar p:egyenletek},
the equalities \eqref{bar tau} are satisfied, and all conditions
listed in (H1) and (H3)-(H5) hold. \emph{It remains to verify (H2).}

It is clear from above that 
\[
\bar{L}:=2\bar{L}_{1}+5\bar{L}_{2}+5\bar{L}_{3}+3\bar{L}_{4}+3\bar{L}_{5}
\]
is arbitrarily close to $2L_{1}+5L_{2}+5L_{3}+3L_{4}+3L_{5}=1$ if
$\left|\bar{K}-K^{*}\left(\varepsilon\right)\right|$ and $\left\Vert \bar{p}_{0}-p_{0}\right\Vert $
are small enough. We complete the proof by showing that $\bar{L}=1$.
Using the symmetry property \eqref{symmetry property} and the equations
\eqref{bar p:egyenletek}-\eqref{bar tau}, we calculate that 
\begin{align}
\bar{p}\left(t-1+\bar{L}\right) & =-\bar{y}_{8}(t)\quad\mbox{for }t\in\left[0,\bar{L}_{4}\right],\notag\\
\bar{p}\left(t-1+\bar{L}+\bar{L}_{4}\right) & =-\bar{y}_{9}(t)\quad\mbox{for }t\in\left[0,\bar{L}_{2}+\bar{L}_{5}\right],\notag\\
\bar{p}\left(t-1+\bar{L}+\bar{L}_{2}+\bar{L}_{4}+\bar{L}_{5}\right) & =-\bar{y}_{10}(t)\quad\mbox{for }t\in\left[0,\bar{L}_{3}\right],\notag\\
\bar{p}\left(t-1+\bar{L}+\bar{L}_{2}+\bar{L}_{3}+\bar{L}_{4}+\bar{L}_{5}\right) & =\bar{y}_{1}(t)\quad\mbox{for }t\in\left[0,\bar{L}_{1}\right].\notag
\end{align}
Recall that $-\bar{y}_{8},-\bar{y}_{9},-\bar{y}_{10}$ and $\bar{y}_{1}$
are solutions of $\dot{y}=-y+K$. Hence, by the above equalities,
\[
\dot{\bar{p}}(t)=-\bar{p}\left(t\right)+K\quad\mbox{for }t\in\left[-1+\bar{L},-1+\bar{L}+\bar{\tau}_{1}\right],
\]
which is possible only if
\[
\bar{p}(t)\geq1+\varepsilon\quad\mbox{for }t\in\left[-2+\bar{L},-2+\bar{L}+\bar{\tau}_{1}\right].
\]
 We already know that $\bar{p}(t)<-1-\varepsilon$ for $t\in(-1+\bar{\tau}_{1},-1+\bar{\omega})$.
Also observe that 
\[
\bar{p}(t)<-1-\varepsilon\quad\mbox{for }t\in\left[-1-\bar{L}_{3},-1\right)
\]
 as $\bar{p}(t-1-\bar{L}_{3})=-\bar{y}_{10}(t)$ for $t\in[0,\bar{L}_{3}]$.
As $\bar{L}$ is arbitrarily close to $1$, necessarily $\bar{L}=1$.

9. It follows from Proposition \ref{prop:per sol fixed point} that
$\bar{L}_{2}$ is the fixed point of $L_{2}\mapsto F(L_{2},K,\varepsilon)$. 
\end{proof}

\section{The proof of Theorem \ref{main thm} }

\begin{proof}[Proof of Theorem \ref{main thm}] 
\emph{Step 1.}  According to Proposition \ref{prop:bifurcation}, if $\varepsilon>0$
is sufficiently small, then there are $K^{*}\in\left(6.5,7\right)$
and $L_{2}^{*}\in(0,\widehat{L}_{2}(K^{*},\varepsilon))$ such that
$F_{\varepsilon}$ undergoes a saddle-node bifurcation at $(L_{2}^{*},K^{*})$:
one can give a constant $\delta_{1}>0$ such that 
\begin{itemize}
\item if $K\in(K^{*}-\delta_{1},K^{*})$, then  $F_{\varepsilon}(\cdot,K)$ has no fixed points close to $L_{2}^{*}$,
\item $L_{2}^{*}$ is an isolated fixed point of $F_{\varepsilon}\left(\cdot,K^{*}\right)$, 
\item and  if $K\in(K^{*},K^{*}+\delta_{1})$, then $F_{\varepsilon}(\cdot,K)$ has exactly two fixed points (converging to $L_{2}^{*}$ as $K\rightarrow K^{*}$). 
\end{itemize}

By  Corollary \ref{p is a sol},
the fixed points of $F_{\varepsilon}(\cdot,K)$ yield periodic solutions if $\varepsilon>0$ is
small enough. Thereby we obtain one periodic orbit for $K=K^{*}$, and two different ones for each $K\in(K^{*},K^{*}+\delta_{1})$. Let $p\colon\mathbb{R}\rightarrow\mathbb{R}$ denote the periodic solution given for the bifurcation parameter $K^{*}$.  Corollary \ref{p is a sol} implies that
the initial segments of both periodic solutions corresponding to parameters $K\in(K^{*},K^{*}+\delta_{1})$ converge to $p_0$ as $K\rightarrow K^{*}$.

\emph{Step 2.}  Proposition \ref{prop:no other per sol} gives a constant $\delta_{2}>0$
and an open neighborhood $N$ of $p_{0}$ in the hyperplane
\[
H=\left\{ \varphi\in C\colon\ \varphi\left(-1\right)=1+\varepsilon\right\} 
\]
such that if $K\in(K^{*}-\delta_{2},K^{*}+\delta_{2})$ and $\bar{p}$
is a periodic solution with $\bar{p}_{0}\in N$, then $\bar{p}_{0}$
derives from a fixed point of $F_{\varepsilon}(\cdot,K)$ as in Corollary
\ref{p is a sol}. 

Consider a sufficiently small open neighborhood $B$ of $p_{0}$ in the phase space $C$, and the standard Poincar\'e
map $\mathcal{P}$ from $B$ to H with fixed point $p_{0}$. (The existence of such $\mathcal{P}$ can be shown 
using the implicit function theorem and the fact that $\mathcal{O}=\left\{ p_{t}:t\in\mathbb{R}\right\} $ intersects $H$ transversally, see \cite{Diekmann,Lani-Wayda}
and Appendix I in \cite{Krisztin-Walther-Wu}.) As $\mathcal{P}$
depends continuously on $\varphi\in C$ and on the right hand side
of \eqref{Eq}, we may assume that $\mathcal{P}$ maps $B$ into the neighborhood $N$
for all $K\in(K^{*}-\delta_{2},K^{*}+\delta_{2})$. This means that
if $\bar{p}$ is a periodic solution with segments in $B$, then, by the translation of time, it
derives from a fixed point of $F_{\varepsilon}(\cdot,K)$.

\emph{Step 3.} 
Choose $\delta\in(0,\min\{\delta_{1},\delta_{2}\})$ so small that
for $K\in(K^{*},K^{*}+\delta)$, both periodic solutions given in Step 1 have initial segments in $B$. This is
possible as the initial functions of the periodic solutions converge to $p_0$ as $K\rightarrow K^{*}$.
The main theorem of the paper holds with this
constant $\delta$ and neighborhood $B$. It is clear from our construction
that all periodic solutions in question are of large amplitude, i.e.,
they oscillate about both unstable fixed points of $f_{K}$. \end{proof}

As the bifurcation of the large-amplitude periodic orbits corresponds
to the bifurcation of the fixed points of $F_{\varepsilon}$, we see
from Corollary \ref{cor: limit of K} that $K^{*}$ tends to $K_{0}$
as $\varepsilon\rightarrow0^{+}$.

\appendix
\section{~}
\def\appendixname{}

In the Appendix we examine the partial derivatives of $F\colon U\rightarrow\mathbb{R}$
and work with the assumption that 
\begin{lyxlist}{00.00.0000}
\item [{(H6)}] $L_{i}$, $i\in\{1,3,4,5\}$, and $\theta_{i},$ $1\leq i\leq6$,
are defined by (\ref{L3})-(\ref{theta3}) on $U$.
\end{lyxlist}
We will use notation $O$ as discussed before the proof of Proposition
\ref{prop:Fsorfejtes}. 
\begin{rem}
\label{rem:techincal} Let $\alpha\in\mathbb{R}$. Assume that $\theta_{6}$
is defined by \eqref{theta6} on $U$. Recall from Proposition \ref{prop:Fsorfejtes}.(i)
that $\theta_{6}=1+O(\varepsilon)$ as $\varepsilon\rightarrow0^{+}$.
Therefore, by the binomial expansion, 
\begin{align}
\left(K+\theta_{6}\right)^{\alpha} & =\left(K+1\right)^{\alpha}\left(1+\frac{\theta_{6}-1}{K+1}\right)\notag^{\alpha}\nonumber \\
 & =\left(K+1\right)^{\alpha}\sum_{n=0}^{\infty}{\alpha \choose n}\left(\frac{\theta_{6}-1}{K+1}\right)^{n}\label{*}\\
 & =\left(K+1\right)^{\alpha}+O(\varepsilon)\quad\mbox{as }\varepsilon\rightarrow0^{+}.\notag\nonumber 
\end{align}
Similarly, 
\begin{equation}
(K-1-\varepsilon)^{\alpha}=(K-1)^{\alpha}\left(1-\frac{\varepsilon}{K-1}\right)^{\alpha}=\left(K-1\right)^{\alpha}+O(\varepsilon)\label{**}
\end{equation}
 if $K\in\left(6.5,7\right)$ and $\varepsilon\rightarrow0^{+}$.\end{rem}
\begin{prop}
\label{prop:Kderivalt} Assume (H6). Then $F$ is continuously differentiable
on $U$ with respect to $K$. Furthermore, 
\begin{equation}
\frac{\partial}{\partial K}F(L_{2},K,\varepsilon)=1-e^{-\frac{1}{2}}\left(\frac{3}{2}\sqrt{\frac{K+1}{K-1}}-\frac{1}{2}\sqrt{\left(\frac{K+1}{K-1}\right)^{3}}\right)+\frac{2}{(K-1)^{2}}+O(\varepsilon)\label{der of F wr K}
\end{equation}
as $\varepsilon\rightarrow0^{+}$.\end{prop}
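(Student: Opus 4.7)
The plan is to verify that $F$ is continuously differentiable with respect to $K$ via the chain rule applied to each constituent in its definition, and then to extract the leading-order expansion in $\varepsilon$ by repeated use of Proposition~\ref{prop:Fsorfejtes} and Remark~\ref{rem:techincal}. First I would note that the formulas \eqref{L3}-\eqref{theta3} realize $L_3,L_4,L_5$ and $\theta_1,\ldots,\theta_6$ as compositions of polynomials, exponentials and logarithms whose arguments remain bounded away from singularities on $U$ (we have $K-1-\varepsilon>5$, and $K+\theta_{6}>0$ by the remark following \eqref{theta6}). Hence all these auxiliary quantities are $C^1$ in $K$, and therefore so is $F$. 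It remains to compute $\partial F/\partial K$ to leading order.

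I would split $F$ into three pieces, $F=F_A+\theta_3-F_C+L_2$, where $F_A=\frac{K}{\varepsilon}(K+1)\bigl(1-(1-L_4)e^{L_4}\bigr)$ and $F_C=(1+\varepsilon)\frac{K+\theta_6}{K-1}e^{-L_2}$, and differentiate each term. For $F_A$, using $\partial_{L_4}\bigl(1-(1-L_4)e^{L_4}\bigr)=L_4e^{L_4}$ gives
\[
\frac{\partial F_A}{\partial K}=\frac{2K+1}{\varepsilon}\bigl(1-(1-L_4)e^{L_4}\bigr)+\frac{K(K+1)}{\varepsilon}L_4e^{L_4}\frac{\partial L_4}{\partial K}.
\]
Since $L_4=O(\varepsilon)$ we have $1-(1-L_4)e^{L_4}=O(L_4^{2})=O(\varepsilon^{2})$ and $L_4e^{L_4}=O(\varepsilon)$; provided $\partial L_4/\partial K=O(1)$, which one checks by differentiating \eqref{L4} and bounding $\partial\theta_{6}/\partial K$ uniformly (using Remark~\ref{rem:techincal}), the result is $\partial F_A/\partial K=O(\varepsilon)$. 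For $F_C$, Remark~\ref{rem:techincal} and $\theta_6=1+O(\varepsilon)$ give $F_C=\frac{K+1}{K-1}+O(\varepsilon)$, and differentiating in $K$ yields
\[
-\frac{\partial F_C}{\partial K}=-\frac{\partial}{\partial K}\!\left(\frac{K+1}{K-1}\right)+O(\varepsilon)=\frac{2}{(K-1)^{2}}+O(\varepsilon),
\]
where the $O(\varepsilon)$ correction differentiates to $O(\varepsilon)$ because no $\varepsilon^{-1}$ factors are present.

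The principal contribution comes from $\partial\theta_3/\partial K$. Proposition~\ref{prop:Fsorfejtes}(ii) already gives $\theta_3\to K-\sqrt{(K+1)^{3}/(e(K-1))}$ as $\varepsilon\to 0^{+}$, whose $K$-derivative equals
\[
1-e^{-\frac{1}{2}}\left(\frac{3}{2}\sqrt{\frac{K+1}{K-1}}-\frac{1}{2}\sqrt{\left(\frac{K+1}{K-1}\right)^{3}}\right).
\]
To promote this pointwise limit into a statement about $\partial\theta_3/\partial K$, I would repeat the Taylor-expansion scheme in the proof of Proposition~\ref{prop:Fsorfejtes} one differentiation higher. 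Namely, I would differentiate \eqref{theta3}, \eqref{theta2}, \eqref{theta1}, \eqref{theta6}, \eqref{theta5}, \eqref{L3} in $K$, then substitute $L_3=\ln(1+\varepsilon/(K-1-\varepsilon))=\frac{\varepsilon}{K-1-\varepsilon}+O(\varepsilon^{2})$ and its $K$-derivative $-\varepsilon/(K-1-\varepsilon)^{2}+O(\varepsilon^{2})$, together with the analogous expansions for $\theta_5,\theta_6$. The expressions $\frac{K}{\varepsilon}((1+\varepsilon)te^{-t}+e^{-t}-1)$ and $\frac{K^{2}}{\varepsilon^{2}}(K-1)(1-(1+t+t^{2}/2)e^{-t})$ evaluated at $t=L_3$ have both value and $K$-derivative of size $O(\varepsilon)$ by the same cancellations used in \eqref{biz6}, \eqref{biz9}, \eqref{biz10}; so the only surviving leading term in $\partial\theta_3/\partial K$ is the derivative of $\theta_1\cdot e^{-L_3-L_2}$, which reduces via \eqref{theta1} and Remark~\ref{rem:techincal} to the displayed expression. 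Adding the three contributions gives \eqref{der of F wr K}.

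The main obstacle is the bookkeeping for $\partial\theta_3/\partial K$: unlike for $\theta_3$ itself, one cannot simply discard the $\varepsilon^{-1}$ and $\varepsilon^{-2}$ prefactors, because differentiation in $K$ can, a priori, enlarge an $O(\varepsilon^{k})$ remainder by a factor of $\varepsilon^{-1}$ through $L_3$. The careful point is that $\partial L_3/\partial K$ is itself $O(\varepsilon)$ and that the Taylor expansions underlying \eqref{biz9}-\eqref{biz10} still give enough orders of cancellation after differentiation. Once this is checked, continuity of $\partial F/\partial K$ on $U_\varepsilon$ follows from continuity of each ingredient, completing the proof.
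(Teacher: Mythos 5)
Your decomposition of $F$ into the three main terms and the overall strategy (term-by-term differentiation plus the Taylor expansions of Proposition~\ref{prop:Fsorfejtes}) is exactly the paper's, and your treatment of $\partial\theta_{3}/\partial K$ correctly identifies both the leading term and the danger that differentiating in $K$ could lose a factor of $\varepsilon$. However, there is a genuine quantitative gap in your handling of $\theta_{6}$, and it propagates to two of your three terms. For $F_{A}$ you claim that $\partial L_{4}/\partial K=O(1)$ suffices; it does not. The second summand in $\partial F_{A}/\partial K$ is $\frac{K(K+1)}{\varepsilon}L_{4}e^{L_{4}}\,\partial L_{4}/\partial K$, and since $L_{4}$ is genuinely of order $\varepsilon$ (with nonzero coefficient $\tfrac{K-4}{2(K^{2}-1)}$ at $L_{2}=0$), an $O(1)$ bound on $\partial L_{4}/\partial K$ only gives $O(1)$ for this summand — which would contribute an unknown nonzero constant to \eqref{der of F wr K} and invalidate the stated formula. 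What is actually needed, and what the paper proves, is $\partial\theta_{6}/\partial K=O(\varepsilon)$, hence $\partial L_{4}/\partial K=O(\varepsilon)$. This is not a "uniform bound" obtainable from Remark~\ref{rem:techincal}: it rests on a specific cancellation in
\[
\frac{\partial\theta_{6}}{\partial K}=\frac{\varepsilon(1+\varepsilon)}{(K-1)^{2}}e^{-L_{2}}+\frac{2K-1-\varepsilon}{\varepsilon}\ln{\frac{K-1}{K-1-\varepsilon}}-\frac{2K-1}{K-1},
\]
where the middle term equals $\frac{2K-1}{K-1}+O(\varepsilon)$ by \eqref{sorfejtesln} and exactly cancels the last term to leading order.

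The same issue undermines your computation for $F_{C}$. Your assertion that the $O(\varepsilon)$ correction in $F_{C}=\frac{K+1}{K-1}+O(\varepsilon)$ "differentiates to $O(\varepsilon)$ because no $\varepsilon^{-1}$ factors are present" is false: $\theta_{6}$, and hence $F_{C}$, contains the term $\frac{K}{\varepsilon}(K-1-\varepsilon)\ln\frac{K-1}{K-1-\varepsilon}$, so an $\varepsilon^{-1}$ factor is very much present. (More fundamentally, one cannot differentiate an $O(\varepsilon)$ estimate; one must differentiate the expression and then estimate.) The correct computation gives $\partial F_{C}/\partial K=\frac{1+\varepsilon}{K-1}e^{-L_{2}}\bigl(1+\partial\theta_{6}/\partial K-\frac{K+\theta_{6}}{K-1}\bigr)$, which yields $-\frac{2}{(K-1)^{2}}+O(\varepsilon)$ only because $\partial\theta_{6}/\partial K=O(\varepsilon)$; with merely an $O(1)$ bound the constant would again be wrong. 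The same $O(\varepsilon)$ bound on $\partial\theta_{6}/\partial K$ also enters $\partial\theta_{2}/\partial K$ through the factor $(1+\partial\theta_{6}/\partial K)$ multiplying $(K+\theta_{6})^{1/2}$. So the missing ingredient is a single, explicitly verified cancellation — but it is load-bearing for the entire formula, and your proposal as written does not establish it.
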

\begin{proof}
\textit{\emph{We explicitly calculate the derivatives of the three
main terms of $F$ with respect to $K$ in order to see that they
are continuous on $U.$ Meanwhile, we show that}}

\textit{(i)} $\partial\theta_{6}/\partial K=O(\varepsilon)$, $\partial L_{4}/\partial K=O(\varepsilon)$
and thus 
\[
\frac{\partial}{\partial K}\left(\frac{K(K+1)}{\varepsilon}\left(1-(1-L_{4})e^{L_{4}}\right)\right)=O(\varepsilon),
\]

\textit{(ii)} 
\[
\frac{\partial\theta_{3}}{\partial K}=1-e^{-\frac{1}{2}}\left(\frac{3}{2}\sqrt{\frac{K+1}{K-1}}-\frac{1}{2}\sqrt{\left(\frac{K+1}{K-1}\right)^{3}}\right)+O(\varepsilon),
\]

\emph{(iii)} and 
\[
\frac{\partial}{\partial K}\left((1+\varepsilon)\frac{K+\theta_{6}}{K-1}e^{-L_{2}}\right)=\frac{-2}{(K-1)^{2}}+O(\varepsilon)\mbox{ as }\varepsilon\rightarrow0^{+}.
\]

For notational simplicity, let $'$ denote differentiation with respect
to $K$ in this proof.

\textit{(i)} The derivative of $\theta_{6}$ with respect to $K$
is 
\begin{equation}
\theta_{6}'=\frac{\varepsilon(1+\varepsilon)}{(K-1)^{2}}e^{-L_{2}}+\frac{2K-1-\varepsilon}{\varepsilon}\ln{\frac{K-1}{K-1-\varepsilon}}-\frac{2K-1}{K-1}.
\end{equation}
As $L_{2}\in(-\varepsilon,\varepsilon)$, it is clear that 
\[
\frac{\varepsilon(1+\varepsilon)}{(K-1)^{2}}e^{-L_{2}}=O(\varepsilon).
\]
Using first (\ref{sorfejtesln}) and then \eqref{**} with $\alpha=-1$,
we deduce that 
\[
\frac{2K-1-\varepsilon}{\varepsilon}\ln{\frac{K-1}{K-1-\varepsilon}}=\frac{2K-1-\varepsilon}{K-1-\varepsilon}+O(\varepsilon)=\frac{2K-1}{K-1}+O(\varepsilon).
\]
It follows from above that $\theta_{6}'=O(\varepsilon)$. 

Next observe that the derivative of $L_{4}$ with respect to $K$
is 
\[
L_{4}'=\frac{1+\theta_{6}'}{K+\theta_{6}}-\frac{1}{K+1}.
\]
Applying \eqref{*} with $\alpha=-1,$ we obtain that $(K+\theta_{6})^{^{-1}}=(K+1)^{^{-1}}+O(\varepsilon).$
Using $\theta_{6}'=O(\varepsilon)$ and the last result, we conclude
that $L_{4}'=O(\varepsilon)$. 

In order to complete the proof of (i), let us examine 
\[
\left(\frac{K(K+1)}{\varepsilon}\left(1-(1-L_{4})e^{L_{4}}\right)\right)',
\]
that is 
\[
\frac{2K+1}{\varepsilon}\left(1-(1-L_{4})e^{L_{4}}\right)+\frac{K(K+1)}{\varepsilon}L_{4}L_{4}'e^{L_{4}}.
\]
By Proposition \ref{prop:Fsorfejtes}.(i), the first term of this
expression is $O(\varepsilon)$. The second term is also $O(\varepsilon)$
because $L_{4}=O(\varepsilon)$ (see again Proposition \ref{prop:Fsorfejtes}.(i))
and $L_{4}'=O(\varepsilon)$. 

\textit{(ii)} Recall that $\theta_{3}$ is defined by \eqref{theta3}.
As $\theta_{3}$ a function of $\theta_{2}$, first we differentiate
$\theta_{2}$ with respect to $K$ using formula \eqref{theta2}:
\begin{align*}
\theta_{2}' & =e^{-L_{2}}+\frac{1}{\varepsilon}\left((1+\varepsilon)L_{2}e^{-L_{2}}+e^{-L_{2}}-1\right)\\
 & \quad-e^{-\frac{1}{2}}\left(\left(\frac{K+\theta_{6}}{K-1-\varepsilon}\right)^{\frac{3}{2}}-\frac{3}{2}\frac{(K-1)(K+\theta_{6})^{\frac{3}{2}}}{(K-1-\varepsilon)^{\frac{5}{2}}}\right)\\
 & \quad+\frac{3}{2}e^{-\frac{1}{2}}\frac{(K-1)(K+\theta_{6})^{\frac{1}{2}}\left(1+\theta_{6}'\right)}{(K-1-\varepsilon)^{\frac{3}{2}}}.
\end{align*}
Using the inequality $|L_{2}|<\varepsilon,$ result (\ref{biz6})
with $t=L_{2}$, Remark \ref{rem:techincal} with various $\alpha$-s
and statement (i) of this proposition, we get that 
\begin{equation}
\theta_{2}'=1-e^{-\frac{1}{2}}\left(\frac{3}{2}\sqrt{\frac{K+1}{K-1}}-\frac{1}{2}\sqrt{\left(\frac{K+1}{K-1}\right)^{3}}\right)+O(\varepsilon).\label{theta2deriv}
\end{equation}

Now we differentiate the second term of $\theta_{3}$ with respect
to $K$. We get from (\ref{L3}) that 
\begin{equation}
L_{3}'=-\frac{\varepsilon}{\left(K-1\right)\left(K-1-\varepsilon\right)}\quad\mbox{and}\quad\left(e^{-L_{3}}\right)'=\frac{\varepsilon}{(K-1)^{2}}.\label{L3deriv}
\end{equation}
Therefore 
\[
\left(\frac{K}{\varepsilon}\left((1+\varepsilon)L_{3}e^{-L_{2}-L_{3}}+e^{-L_{3}}-1\right)\right)'
\]
 is equivalent to
\begin{alignat*}{1}
 & \frac{1}{\varepsilon}\left((1+\varepsilon)L_{3}e^{-L_{2}-L_{3}}+e^{-L_{3}}-1\right)\\
 & +\frac{K}{(K-1)^{2}}\left(1-(1+\varepsilon)e^{-L_{2}}+L_{3}(1+\varepsilon)e^{-L_{2}}\right).
\end{alignat*}
Using that $L_{2}=O(\varepsilon)$, $L_{3}=O(\varepsilon)$ and applying
(\ref{biz9}) with $t=L_{3},$ we deduce that 
\begin{equation}
\left(\frac{K}{\varepsilon}\left((1+\varepsilon)L_{3}e^{-L_{2}-L_{3}}+e^{-L_{3}}-1\right)\right)'=O(\varepsilon).\label{biz11}
\end{equation}

The last term of $\theta_{3}$ is 
\[
\frac{K^{2}}{\varepsilon^{2}}(K-1)\left(1-\left(1+L_{3}+\frac{L_{3}^{2}}{2}\right)e^{-L_{3}}\right).
\]
Its derivative with respect to $K$ is 
\[
\frac{3K^{2}-2K}{\varepsilon^{2}}\left(1-\left(1+L_{3}+\frac{L_{3}^{2}}{2}\right)e^{-L_{3}}\right)-\frac{K^{2}L_{3}^{2}}{2\varepsilon(K-1)}.
\]
Since $L_{3}=O(\varepsilon)$, and (\ref{biz10}) holds for $t=L_{3}$,
we conclude that 
\begin{equation}
\left(\frac{K^{2}}{\varepsilon^{2}}(K-1)\left(1-\left(1+L_{3}+\frac{L_{3}^{2}}{2}\right)e^{-L_{3}}\right)\right)'=O(\varepsilon).\label{biz12}
\end{equation}

Statement (ii) follows immediately from \eqref{theta3}, the fact
that $L_{3}=O\left(\varepsilon\right)$, the boundedness of $\theta_{2}$
(see the proof of Proposition \ref{prop:Fsorfejtes}) and from (\ref{theta2deriv})-(\ref{biz12}). 

\textit{(iii)} It is clear that 
\[
\left((1+\varepsilon)\frac{K+\theta_{6}}{K-1}e^{-L_{2}}\right)'=\frac{1+\varepsilon}{K-1}e^{-L_{2}}\left(1+\theta_{6}'-\frac{K+\theta_{6}}{K-1}\right).
\]
Since $L_{2}=O(\varepsilon)$, $\theta_{6}=1+O\left(\varepsilon\right)$
and $\theta_{6}'=O(\varepsilon),$ we get statement (iii). 

Looking again at the derivatives of the terms of $F$ calculated above,
we see that they are continuous on $U$. Hence $F$ is continuously
differentiable on $U$ with respect to $K$. Formula \eqref{der of F wr K}
follows from statements (i)-(iii).\end{proof}
\begin{cor}
\label{cor:Kszerintiderivlathatarertek} Assume that $\lim_{n\rightarrow\infty}\varepsilon_{n}=0^{+},$
$(L_{2,n},K_{n},\varepsilon_{n})\in U$ for all $n\geq0$ and $F(L_{2,n},K_{n},\varepsilon_{n})=L_{2,n}\mbox{ for all }n\geq0.$
Then 
\begin{alignat}{1}
\lim_{n\rightarrow\infty}\frac{\partial}{\partial K}F(L_{2,n},K_{n},\varepsilon_{n}) & =1-e^{-\frac{1}{2}}\left(\frac{3}{2}\sqrt{\frac{K_{0}+1}{K_{0}-1}}-\frac{1}{2}\sqrt{\left(\frac{K_{0}+1}{K_{0}-1}\right)^{3}}\right)\label{limit of F_K}\\
 & +\frac{2}{(K_{0}-1)^{2}},\notag
\end{alignat}
where $K_{0}$ is the unique solution of (\ref{ke}) in {[}6.5,7{]}.
This limit is positive. \end{cor}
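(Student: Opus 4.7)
The plan is to combine the asymptotic formula (\ref{der of F wr K}) from Proposition \ref{prop:Kderivalt} with the limit identification from Corollary \ref{cor: limit of K}. First, the hypothesis $(L_{2,n},K_{n},\varepsilon_{n})\in U$ forces $|L_{2,n}|<\varepsilon_{n}\to 0^{+}$, so $L_{2,n}\to 0$, and Corollary \ref{cor: limit of K} gives $K_{n}\to K_{0}$. The remainder term $O(\varepsilon)$ in (\ref{der of F wr K}) is uniform in $(L_{2},K)\in(-\varepsilon,\varepsilon)\times(6.5,7)$ in the sense explained just before the proof of Proposition \ref{prop:Fsorfejtes}, so it vanishes along the sequence. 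The remaining explicit terms are continuous functions of $K$ away from $K=1$, and evaluating them at $K_{0}$ yields the right-hand side of (\ref{limit of F_K}).

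For the positivity claim, introduce the abbreviation $r=\sqrt{(K_{0}+1)/(K_{0}-1)}$, so that the limit rewrites as
\[
\Lambda = 1 - e^{-1/2}\,\frac{r\bigl(3-r^{2}\bigr)}{2} + \frac{2}{(K_{0}-1)^{2}}.
\]
Since $K_{0}\in(6.5,7)$, monotonicity of $K\mapsto(K+1)/(K-1)$ gives $r^{2}\in(4/3,\,15/11)$, whence $r<\sqrt{15/11}$ and $3-r^{2}<5/3$. The function $\rho\mapsto\rho(3-\rho^{2})/2$ is decreasing on $\rho\geq 1$, so $r(3-r^{2})/2$ lies in a narrow interval that can be bounded above by an explicit rational quantity. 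Combined with $e^{-1/2}<1$ and the already established estimate $(K_{0}+1)^{3}/(K_{0}-1)^{3}<e$ from (\ref{estimate}), an elementary rational bound shows $e^{-1/2}\,r(3-r^{2})/2$ is strictly less than $1$ by a comfortable margin, and the positive remainder $2/(K_{0}-1)^{2}$ only helps. This gives $\Lambda>0$.

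The main (minor) obstacle is carrying out the positivity estimate without invoking any numerical value of $K_{0}$ itself; I want to use only $K_{0}\in(6.5,7)$ and the algebraic bounds already in the paper. If a cleaner route is desired, one can substitute the defining relation (\ref{ke}) for $K_{0}$, which is equivalent to $\sqrt{(K_{0}+1)^{3}/(e(K_{0}-1))}=(K_{0}^{2}-2K_{0}-1)/(K_{0}-1)$, to rewrite $\Lambda$ as a rational function of $K_{0}$ alone and then verify its positivity on $(6.5,7)$ by a single-variable monotonicity check. Either route is short; the rest of the proof is just bookkeeping of limits.
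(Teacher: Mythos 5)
Your proof is correct. The first half (combining Proposition \ref{prop:Kderivalt} with Corollary \ref{cor: limit of K}, noting $L_{2,n}\to 0$ and the uniformity of the $O(\varepsilon)$ term) is exactly the paper's argument. The positivity part, however, takes a genuinely different route. You bound $e^{-1/2}\,r(3-r^{2})/2$ directly from the crude localization $K_{0}\in(6.5,7)$: since $r^{2}\in(4/3,15/11)$ and $\rho\mapsto\rho(3-\rho^{2})/2$ is decreasing for $\rho\geq 1$, one gets $r(3-r^{2})/2<\tfrac{2}{\sqrt{3}}\cdot\tfrac{5}{6}=\tfrac{5\sqrt{3}}{9}$, whose square is $25/27<1$; together with $e^{-1/2}<1$ this gives the subtracted term $<1$, and the positive term $2/(K_{0}-1)^{2}$ finishes it. That closes the one step you left implicit (the ``explicit rational quantity''), so the argument is complete and does not even use the defining relation \eqref{ke}. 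The paper instead substitutes $e^{-1/2}=(K_{0}^{2}-2K_{0}-1)/\sqrt{(K_{0}-1)(K_{0}+1)^{3}}$ from \eqref{ke}, which collapses the subtracted term to the rational expression $(K_{0}^{2}-2K_{0}-1)(K_{0}-2)/\bigl((K_{0}-1)^{2}(K_{0}+1)\bigr)$, visibly less than $(K_{0}-2)/(K_{0}+1)<1$. Your approach is more elementary and works for any $K_{0}$ in the interval, not just the root of \eqref{ke}; the paper's is slicker and avoids any square-root estimates. Both are valid; your alternative remark at the end is essentially the paper's actual proof.
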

\begin{proof}
It is clear from Corollary \ref{cor: limit of K} and from Proposition
\ref{prop:Kderivalt} that this limit holds. It remains to verify
that it is positive. Expressing $e^{-1/2}$ from (\ref{ke}), we deduce
that the second term on the right hand side of \eqref{limit of F_K}
is 
\[
\frac{{K_{0}}^{2}-2K_{0}-1}{\sqrt{\left(K_{0}-1\right)\left(K_{0}+1\right)^{3}}}\left(\frac{3}{2}\sqrt{\frac{K_{0}+1}{K_{0}-1}}-\frac{1}{2}\sqrt{\left(\frac{K_{0}+1}{K_{0}-1}\right)^{3}}\right),
\]
that is 
\[
\frac{\left({K_{0}}^{2}-2K_{0}-1\right)(K_{0}-2)}{(K_{0}-1)^{2}(K_{0}+1)}.
\]
This expression is smaller than $(K_{0}-2)/(K_{0}+1)<1$, therefore
\eqref{limit of F_K} is greater than zero. \end{proof}
\begin{prop}
\label{prop:L2 szerinti derivalt} Under assumption (H6), $F$ is
continuously differentiable on $U$ with respect to $L_{2}$. In addition,
\[
\frac{\partial}{\partial L_{2}}F(0,K,\varepsilon)=\frac{K^{2}+8K+2}{2\left(K^{2}-1\right)}+\frac{3}{2}e^{-\frac{1}{2}}\sqrt{\frac{K+1}{K-1}}+O(\varepsilon)
\]
and 
\[
\frac{\partial}{\partial L_{2}}F\left(\widehat{L}_{2},K,\varepsilon\right)=\frac{-K^{2}+6K+2}{2(K-1)}+\frac{3}{2}e^{-\frac{1}{2}}\sqrt{\frac{K+1}{K-1}}+O(\varepsilon)\mbox{ as }\varepsilon\rightarrow0^{+},
\]
where $\widehat{L}_{2}$ is given by \eqref{L2kalap}. \end{prop}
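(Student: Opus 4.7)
The plan is to differentiate $F$ term by term with respect to $L_2$ using the explicit definition
\[
F(L_{2},K,\varepsilon)=\frac{K(K+1)}{\varepsilon}\left(1-(1-L_{4})e^{L_{4}}\right)+\theta_{3}-(1+\varepsilon)\frac{K+\theta_{6}}{K-1}e^{-L_{2}}+L_{2},
\]
and the fact that $\theta_{3},\theta_{6},L_{4}$ are expressed through $L_2$ (and $K,\varepsilon$) via \eqref{L3}--\eqref{theta3}. The quantities $L_{3}$ and $\theta_{5}$ do \emph{not} depend on $L_{2}$, which simplifies matters considerably. First I would compute the inner derivatives: from \eqref{theta6} one gets $\partial\theta_{6}/\partial L_{2}=-(1+\varepsilon)\tfrac{K-1-\varepsilon}{K-1}e^{-L_{2}}=-1+O(\varepsilon)$, and then from \eqref{L4} the chain rule gives $\partial L_{4}/\partial L_{2}=(K+\theta_{6})^{-1}\partial\theta_{6}/\partial L_{2}=-(K+1)^{-1}+O(\varepsilon)$ using Remark \ref{rem:techincal}. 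Continuity of these partials on $U$ is immediate from the explicit formulas, which in turn yields continuity of $\partial F/\partial L_{2}$ on $U$ after the derivative of each of the four terms in $F$ is written out; this establishes the continuous differentiability assertion.

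Next I would evaluate at $L_{2}=0$ and at $L_{2}=\widehat{L}_{2}$, relying on the expansions already collected in Propositions \ref{prop:Fsorfejtes} and \ref{prop:hatL2}. For the first term of $F$ the derivative is $\tfrac{K(K+1)}{\varepsilon}L_{4}e^{L_{4}}\partial L_{4}/\partial L_{2}$: at $L_{2}=\widehat{L}_{2}$ this vanishes because $L_{4}=0$, while at $L_{2}=0$ one uses $L_{4}=\tfrac{K-4}{2(K-1)(K+1)}\varepsilon+O(\varepsilon^{2})$ (a byproduct of \eqref{biz22} and \eqref{L4}) to see that this term contributes $-\tfrac{K(K-4)}{2(K-1)(K+1)}+O(\varepsilon)$. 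For the third term a direct calculation gives
\[
\frac{\partial}{\partial L_{2}}\!\left((1+\varepsilon)\frac{K+\theta_{6}}{K-1}e^{-L_{2}}\right)=\frac{1+\varepsilon}{K-1}e^{-L_{2}}\!\left(\frac{\partial\theta_{6}}{\partial L_{2}}-(K+\theta_{6})\right)=-\frac{K+2}{K-1}+O(\varepsilon),
\]
at both $L_{2}=0$ and $L_{2}=\widehat{L}_{2}$, since $K+\theta_{6}=K+1+O(\varepsilon)$ in each case.

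The main work goes into $\partial\theta_{3}/\partial L_{2}$. Because $L_{3}$ is independent of $L_{2}$, only the $e^{-L_{3}}\theta_{2}$ term and the $(1+\varepsilon)L_{3}e^{-L_{2}-L_{3}}$ piece survive, and differentiating \eqref{theta2} I would obtain
\[
\frac{\partial\theta_{2}}{\partial L_{2}}=-\frac{3e^{-1/2}(K-1)(K+\theta_{6})^{1/2}}{2(K-1-\varepsilon)^{3/2}}\frac{\partial\theta_{6}}{\partial L_{2}}-\frac{K(1+\varepsilon)L_{2}}{\varepsilon}e^{-L_{2}}.
\]
Substituting $L_{2}=0$ and using Remark \ref{rem:techincal} to collapse the fractional powers, the first summand becomes $\tfrac{3}{2}e^{-1/2}\sqrt{(K+1)/(K-1)}+O(\varepsilon)$ and the second is $0$; at $L_{2}=\widehat{L}_{2}$, the same leading term appears while the second summand produces $-\tfrac{K(K-4)}{2(K-1)}+O(\varepsilon)$ via \eqref{biz22}. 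Adding in the $L_{3}$ piece (which contributes $-K/(K-1)+O(\varepsilon)$ at either evaluation point because $KL_{3}/\varepsilon=K/(K-1)+O(\varepsilon)$) gives $\partial\theta_{3}/\partial L_{2}$ explicitly up to $O(\varepsilon)$.

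Finally I would assemble the four pieces and simplify. At $L_{2}=0$ the algebraic sum
\[
-\frac{K(K-4)}{2(K-1)(K+1)}-\frac{K}{K-1}+\frac{K+2}{K-1}+1
\]
reduces, after placing over the common denominator $2(K-1)(K+1)$, to $(K^{2}+8K+2)/[2(K^{2}-1)]$; at $L_{2}=\widehat{L}_{2}$ the analogous sum reduces to $(-K^{2}+6K+2)/[2(K-1)]$. These are precisely the claimed formulas. The only nontrivial obstacle is bookkeeping: several terms contain a prefactor of order $1/\varepsilon$, so the $O(\varepsilon)$ remainders in the factors they multiply must be tracked to the correct order (essentially two orders of $\varepsilon$ in $L_{4}$, $\widehat{L}_{2}$, and the logarithmic expansion \eqref{biz21}); the arithmetic of combining the resulting fractions is routine but must be done carefully.
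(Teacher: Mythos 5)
Your proposal is correct and follows essentially the same route as the paper's proof: term-by-term differentiation of $F$ via the inner derivatives $\partial\theta_{6}/\partial L_{2}$, $\partial L_{4}/\partial L_{2}$ and $\partial\theta_{2}/\partial L_{2}$, the expansions from Propositions \ref{prop:Fsorfejtes} and \ref{prop:hatL2}, and the same final assembly; every intermediate formula and evaluated value matches the paper's Claims 1--3. One small slip: $\theta_{5}=(1+\varepsilon)e^{-L_{2}}$ \emph{does} depend on $L_{2}$, but this is harmless because you work with the form \eqref{theta3} in which $\theta_{5}$ has already been substituted out, and you correctly differentiate the resulting $(1+\varepsilon)L_{3}e^{-L_{2}-L_{3}}$ term.
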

\begin{proof}
The proposition will easily follow from the subsequent three claims.

\emph{Claim 1.} The derivative of the first term of $F$ with respect
to $L_{2}$ is 
\begin{equation}
\frac{\partial}{\partial L_{2}}\left(\frac{K(K+1)}{\varepsilon}\left(1-(1-L_{4})e^{L_{4}}\right)\right)=\frac{K}{\varepsilon}L_{4}\frac{\partial\theta_{6}}{\partial L_{2}},\label{prop1}
\end{equation}
where 
\begin{equation}
\frac{\partial\theta_{6}}{\partial L_{2}}=-(1+\varepsilon)\frac{K-1-\varepsilon}{K-1}e^{-L_{2}}.\label{biz14}
\end{equation}
For $L_{2}=0$, \eqref{prop1} equals 
\begin{equation}
-\frac{K(K-4)}{2(K^{2}-1)}+O(\varepsilon)\mbox{ as }\varepsilon\rightarrow0^{+}.\label{*-1}
\end{equation}
For $L_{2}=\widehat{L}_{2}$, it equals 0.

\emph{Claim 2. }The derivative of $\theta_{3}$ with respect to $L_{2}$
is 
\begin{align}
\frac{\partial\theta_{3}}{\partial L_{2}} & =\frac{3}{2}\frac{(1+\varepsilon)\sqrt{K+\theta_{6}}}{\sqrt{K-1-\varepsilon}}e^{-L_{2}-L_{3}-\frac{1}{2}}-\frac{K}{\varepsilon}(1+\varepsilon)(L_{2}+L_{3})e^{-L_{2}-L_{3}}.\label{prop2}
\end{align}
For $L_{2}=0$, this expression is 
\[
\frac{3}{2}e^{-\frac{1}{2}}\sqrt{\frac{K+1}{K-1}}-\frac{K}{K-1}+O(\varepsilon).
\]
For $L_{2}=\widehat{L}_{2}$, the derivative of $\theta_{3}$ with
respect to $L_{2}$ is 
\[
\frac{3}{2}e^{-\frac{1}{2}}\sqrt{\frac{K+1}{K-1}}-\frac{K(K-2)}{2(K-1)}+O(\varepsilon).
\]

\emph{Claim 3.} The derivative of the third term of $F$ with respect
to $L_{2}$ is 
\begin{equation}
\frac{\partial}{\partial L_{2}}\left((1+\varepsilon)\frac{K+\theta_{6}}{K-1}e^{-L_{2}}\right)=-\frac{1+\varepsilon}{K-1}\left(K+\theta_{6}-\theta_{6}'\right)e^{-L_{2}}.\label{prop3}
\end{equation}
For both $L_{2}=0$ and $L_{2}=\widehat{L}_{2}$, this expression
is 
\[
-\frac{K+2}{K-1}+O(\varepsilon).
\]

Let $'$ now denote differentiation with respect to $L_{2}$.

\textit{The proof of Claim 1.} The derivatives of $\theta_{6}$ and
$L_{4}$ with respect to $L_{2}$ are (\ref{biz14}) and 
\begin{equation}
L_{4}'=\frac{\theta_{6}'}{K+\theta_{6}}.\label{biz13}
\end{equation}
It is clear that 
\[
\left(\frac{K(K+1)}{\varepsilon}\left(1-(1-L_{4})e^{L_{4}}\right)\right)'=\frac{K(K+1)}{\varepsilon}L_{4}L_{4}'e^{L_{4}}.
\]
Replacing $e^{L_{4}}$ by $(K+\theta_{6})/(K+1)$ and using (\ref{biz13}),
we get \eqref{prop1}.

Let $L_{2}=0$. From (\ref{biz14}) we obtain that 
\begin{equation}
\theta_{6}'=-1+O(\varepsilon)\mbox{ as }\varepsilon\rightarrow0^{+}.\label{biz15}
\end{equation}
Next we calculate $L_{4}$ for $L_{2}=0$ using formula (\ref{L4}).
So let $L_{2}=0$. Then 
\[
(1+\varepsilon)\frac{K-1-\varepsilon}{K-1}e^{-L_{2}}=(1+\varepsilon)\left(1-\frac{\varepsilon}{K-1}\right)=1+\frac{K-2}{K-1}\varepsilon-\frac{1}{K-1}\varepsilon^{2}.
\]
As a result, by (\ref{theta6}) and (\ref{biz21}), 
\[
\theta_{6}=1+\frac{K-4}{2(K-1)}\varepsilon+O\left(\varepsilon^{2}\right).
\]
We easily get from this and from (\ref{L4}) that 
\begin{equation}
L_{4}=\ln{\left(1+\frac{\theta_{6}-1}{K+1}\right)}=\frac{K-4}{2(K^{2}-1)}\varepsilon+O(\varepsilon^{2}).\label{biz16}
\end{equation}
Substituting (\ref{biz15}) and (\ref{biz16}) into $KL_{4}\theta_{6}'/\varepsilon$,
we get \eqref{*-1}.

By the definition of $\widehat{L}_{2}$, $L_{4}=0$ if $L_{2}=\widehat{L}_{2}$.
Hence \eqref{prop1} equals $0$ in this case, and the proof of Claim
1 is complete.

\textit{The proof of Claim 2.} By (\ref{theta3}), the first term
of $\theta_{3}$ is $\theta_{2}e^{-L_{3}}$. Its derivative with respect
to $L_{2}$ is 
\[
\left(\theta_{2}e^{-L_{3}}\right)'=\frac{3}{2}\frac{(1+\varepsilon)\sqrt{K+\theta_{6}}}{\sqrt{K-1-\varepsilon}}e^{-L_{2}-L_{3}-\frac{1}{2}}-\frac{K}{\varepsilon}\left(1+\varepsilon\right)L_{2}e^{-L_{2}-L_{3}}.
\]
The derivative of the second term of $\theta_{3}$ with respect to
$L_{2}$ is 
\[
\frac{K}{\varepsilon}\left((1+\varepsilon)L_{3}e^{-L_{2}-L_{3}}+e^{-L_{3}}-1\right)'=-\frac{K}{\varepsilon}(1+\varepsilon)L_{3}e^{-L_{2}-L_{3}}.
\]
The third term of $\theta_{3}$ is independent from $L_{2}$. Summing
up, \eqref{prop2} is verified.

According to Remark \ref{rem:techincal}, 
\begin{equation}
\sqrt{K+\theta_{6}}=\sqrt{K+1}+O(\varepsilon)\label{biz34}
\end{equation}
and
\[
\frac{1}{\sqrt{K-1-\varepsilon}}=\frac{1}{\sqrt{K-1}}+O(\varepsilon).
\]
Also recall that $L_{2}=O(\varepsilon)$ and $L_{3}=O(\varepsilon)$.
Hence, for both $L_{2}=0$ and $L_{2}=\hat{L}_{2}$, 
\begin{equation}
\frac{3}{2}\frac{(1+\varepsilon)\sqrt{K+\theta_{6}}}{\sqrt{K-1-\varepsilon}}e^{-L_{2}-L_{3}-\frac{1}{2}}=\frac{3}{2}e^{-\frac{1}{2}}\sqrt{\frac{K+1}{K-1}}+O(\varepsilon).\label{biz23}
\end{equation}

Using (\ref{sorfejtesln}) for $L_{3}=\ln{(K-1)/(K-1-\varepsilon)}$
and then $(K-1-\varepsilon)^{-1}=(K-1)^{-1}+O(\varepsilon)$, we get
the following for $L_{2}=0$: 
\begin{align*}
\frac{K}{\varepsilon}(1+\varepsilon)\left(L_{2}+L_{3}\right)e^{-L_{2}-L_{3}} & =\frac{K}{\varepsilon}(1+\varepsilon)\left(\frac{\varepsilon}{K-1-\varepsilon}+O\left(\varepsilon^{2}\right)\right)(1+O(\varepsilon))\\
 & =\frac{K}{K-1}+O(\varepsilon).
\end{align*}
Subtracting the last result from (\ref{biz23}), the formula for $\theta_{3}'$
at $L_{2}=0$ follows.

Let $L_{2}=\widehat{L}_{2}$. Then by \eqref{biz22} and by (\ref{sorfejtesln}),
\[
\frac{K}{\varepsilon}(1+\varepsilon)\left(\widehat{L}_{2}+L_{3}\right)e^{-\widehat{L}_{2}-L_{3}}
\]
equals 
\begin{alignat*}{1}
 & \frac{K}{\varepsilon}(1+\varepsilon)\left(\left(\frac{K-4}{2(K-1)}+\frac{1}{K-1}\right)\varepsilon+O\left(\varepsilon^{2}\right)\right)(1+O(\varepsilon))\\
 & =\frac{K\left(K-2\right)}{2(K-1)}+O\left(\varepsilon\right).
\end{alignat*}
 Subtracting this from (\ref{biz23}), we complete the proof of Claim
2.

\textit{The proof of Claim 3. }\textit{\emph{The proof of Claim 3
is similar and easy, so we leave it to the reader.}}

The continuous differentiability of $F$ with respect to $L_{2}$
is obvious from (\ref{prop1})-(\ref{biz14}), (\ref{prop2})-(\ref{prop3})
and from the definition of $F$. The formulas for $F'(0,K,\varepsilon)$
and $F'(\widehat{L}_{2},K,\varepsilon)$  immediately follow from
Claims 1--3.\end{proof}
\begin{cor}
\label{cor:L2derivaltjta} Under hypothesis (H6), 
\begin{equation}
\lim_{\varepsilon\rightarrow0^{+}}\frac{\partial}{\partial L_{2}}F(0,\varphi_{\varepsilon}(0),\varepsilon)>1\label{prop4}
\end{equation}
and 
\begin{equation}
\lim_{\varepsilon\rightarrow0^{+}}\frac{\partial}{\partial L_{2}}F\left(\widehat{L}_{2},\varphi_{\varepsilon}\left(\widehat{L}_{2}\right),\varepsilon\right)<1,\label{prop5}
\end{equation}
where $\varphi_{\varepsilon}$ is the map given by Proposition \ref{prop:ift}. \end{cor}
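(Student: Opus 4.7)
The plan is to take the two explicit asymptotic formulas supplied by Proposition \ref{prop:L2 szerinti derivalt} for $\partial F/\partial L_2$ at $L_2=0$ and at $L_2=\widehat{L}_2$, substitute $K=\varphi_\varepsilon(\,\cdot\,)$, and let $\varepsilon\to 0^{+}$. First I would observe that $\varphi_\varepsilon(0)=K_\varepsilon$ by Proposition \ref{prop:iftseged}, so Corollary \ref{cor: limit of K} gives $\varphi_\varepsilon(0)\to K_0$. For the second limit I would note that $\widehat{L}_2(K,\varepsilon)\in(0,\varepsilon)$ for $K\in(6.5,7)$ and small $\varepsilon$ by Proposition \ref{prop:hatL2}, hence $\widehat{L}_2\to 0^{+}$ as $\varepsilon\to 0^{+}$; combining this with the continuity of $\varphi_\varepsilon$ (Proposition \ref{prop:ift}) and Corollary \ref{cor: limit of K}, we again get $\varphi_\varepsilon(\widehat{L}_2)\to K_0$. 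Together with Proposition \ref{prop:L2 szerinti derivalt} this yields
\[
\lim_{\varepsilon\to 0^{+}}\frac{\partial}{\partial L_{2}}F(0,\varphi_\varepsilon(0),\varepsilon)=\frac{K_{0}^{2}+8K_{0}+2}{2(K_{0}^{2}-1)}+\frac{3}{2}e^{-\frac{1}{2}}\sqrt{\frac{K_{0}+1}{K_{0}-1}}
\]
and the analogous formula at $\widehat{L}_2$ with $(-K_{0}^{2}+6K_{0}+2)/(2(K_{0}-1))$ in place of the first term.

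The next step is to exploit the defining equation \eqref{ke} for $K_0$. Taking square roots in $(K_0-1)(K_0+1)^{3}=e(K_0^{2}-2K_0-1)^{2}$ and rearranging gives the key identity
\[
e^{-\frac{1}{2}}\sqrt{\frac{K_{0}+1}{K_{0}-1}}=\frac{K_{0}^{2}-2K_{0}-1}{K_{0}^{2}-1},
\]
which eliminates the transcendental term. Substituting this and putting everything over the common denominator $2(K_{0}^{2}-1)$, the first limit simplifies to $(4K_{0}^{2}+2K_{0}-1)/(2(K_{0}^{2}-1))$; the inequality $>1$ is then equivalent to $2K_{0}^{2}+2K_{0}+1>0$, which is trivial. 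This half is therefore immediate.

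The genuine work is the second inequality. After the same substitution and clearing denominators, the second limit simplifies to $(-K_{0}^{3}+8K_{0}^{2}+2K_{0}-1)/(2(K_{0}^{2}-1))$, and the inequality $<1$ becomes the cubic estimate $g(K_{0})>0$ with $g(K)=K^{3}-6K^{2}-2K-1$. I would handle this without knowing $K_0$ explicitly by noting $K_{0}\in(6.5,7)$ (from Corollary \ref{cor: limit of K}), computing $g(6.5)=274.625-253.5-13-1>0$, and observing that $g'(K)=3K^{2}-12K-2$ is positive throughout $[6.5,7]$ (since $g'(6.5)=46.75>0$ and $g'$ is itself increasing there), so $g$ is increasing on this interval and hence strictly positive.

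The main obstacle is therefore not in the passage to the limit, which is routine once Corollary \ref{cor: limit of K} and the continuity of $\varphi_\varepsilon$ are invoked, but in the algebraic reduction: one must use equation \eqref{ke} at exactly the right moment to convert the awkward combination $e^{-1/2}\sqrt{(K_0+1)/(K_0-1)}$ into a rational function of $K_0$, and then verify a cubic inequality on $(6.5,7)$ without access to a closed form for $K_0$. Both verifications reduce to elementary polynomial estimates of the same flavor as \eqref{estimate}.
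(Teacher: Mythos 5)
Your proposal is correct and follows essentially the same route as the paper: pass to the limit via Corollary \ref{cor: limit of K} and Proposition \ref{prop:L2 szerinti derivalt}, use equation \eqref{ke} to replace $e^{-1/2}\sqrt{(K_0+1)/(K_0-1)}$ by $(K_0^2-2K_0-1)/(K_0^2-1)$, and reduce both inequalities to the sign of a polynomial in $K_0$ on $(6.5,7)$. The only (immaterial) difference is that you verify the cubic inequality $K_0^3-6K_0^2-2K_0-1>0$ by evaluating at $6.5$ and a monotonicity argument, where the paper uses the chain $K^3>6.5K^2>6K^2+2K+1$.
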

\begin{proof}
Recall from Corollary \ref{cor: limit of K} that if $\varepsilon\rightarrow0^{+}$,
then $\varphi_{\varepsilon}(0)\rightarrow K_{0}\in\left(6.5,7\right)$.
We know from (\ref{ke}) that 
\[
e^{-\frac{1}{2}}=\frac{K_{0}^{2}-2K_{0}-1}{\sqrt{K_{0}-1}\sqrt{\left(K_{0}+1\right)^{3}}}.
\]
Therefore, by the results of the previous proposition, 
\begin{align*}
\lim_{\varepsilon\rightarrow0^{+}}\frac{\partial}{\partial L_{2}}F(0,\varphi_{\varepsilon}(0),\varepsilon) & =\frac{K_{0}^{2}+8K_{0}+2}{2\left(K_{0}^{2}-1\right)}+\frac{3\left(K_{0}^{2}-2K_{0}-1\right)}{2\left(K_{0}^{2}-1\right)}\\
 & =1+\frac{2K_{0}^{2}+2K_{0}+1}{2\left(K_{0}^{2}-1\right)}.
\end{align*}
As $K_{0}\in\left(6.5,7\right)$, the last quotient is clearly positive,
and the limit above is greater than $1$. 

Similarly, 
\begin{align}
\lim_{\varepsilon\rightarrow0^{+}}\frac{\partial}{\partial L_{2}}F\left(\widehat{L}_{2},\varphi_{\varepsilon}\left(\widehat{L}_{2}\right),\varepsilon\right) & =\frac{-K_{0}^{2}+6K_{0}+2}{2\left(K_{0}-1\right)}+\frac{3\left(K_{0}^{2}-2K_{0}-1\right)}{2\left(K_{0}^{2}-1\right)}\notag\label{limit of derivative}\\
 & =1+\frac{-K_{0}^{3}+6K_{0}^{2}+2K_{0}+1}{2K_{0}^{2}-2}.
\end{align}
Since $0.5K^{2}>2K+1$ for $K>2+\sqrt{6}$, we deduce that 
\[
K^{3}>6.5K^{2}>6K^{2}+2K+1\qquad\mbox{for}\quad K>6.5.
\]
As $K_{0}>6.5$, this means that the last quotient in \eqref{limit of derivative}
is negative, and hence the limit for $L_{2}=\widehat{L}_{2}$ is smaller
than $1$.
\end{proof}
In the next proposition we write that $u(L_{2},K,\varepsilon)\sim v(K,\varepsilon)$
as $\varepsilon\rightarrow0^{+}$ for functions $u$ and $v$ defined
on $U$ if 
\[
\lim_{\substack{K\rightarrow\bar{K},\thinspace\varepsilon\rightarrow0^{+},\thinspace L_{2}\in(-\varepsilon,\varepsilon)}
}\frac{u(L_{2},K,\varepsilon)}{v(K,\varepsilon)}=1.
\]

\begin{prop}
\label{prop:L2L2 szerinti derivalt} Under assumption (H6), $\partial^{2}F/\partial L_{2}^{2}$
is continuous on $U$, and 
\begin{equation}
\frac{\partial^{2}}{\partial L_{2}^{2}}F(L_{2},K,\varepsilon)\sim-\frac{K^{2}}{(K+1)\varepsilon}\mbox{ as }\varepsilon\rightarrow0^{+}.\label{2nd der of F wr L2}
\end{equation}
\end{prop}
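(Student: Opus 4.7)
The plan is to differentiate once more each of the three summands in the formula for $\partial F/\partial L_{2}$ obtained from Claims 1--3 of Proposition \ref{prop:L2 szerinti derivalt}, and to extract the surviving $1/\varepsilon$-order contributions. The building blocks are the explicit identities
\[
\frac{\partial\theta_{6}}{\partial L_{2}}=-(1+\varepsilon)\frac{K-1-\varepsilon}{K-1}e^{-L_{2}},\qquad \frac{\partial L_{4}}{\partial L_{2}}=\frac{1}{K+\theta_{6}}\frac{\partial\theta_{6}}{\partial L_{2}},
\]
together with the order estimates $\theta_{6}=1+O(\varepsilon)$, $L_{4}=O(\varepsilon)$, $L_{2}=O(\varepsilon)$, $L_{3}=O(\varepsilon)$ from Propositions \ref{prop:Fsorfejtes} and \ref{prop:hatL2}. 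These yield $\partial\theta_{6}/\partial L_{2}\to-1$, $\partial^{2}\theta_{6}/\partial L_{2}^{2}\to 1$ and $\partial L_{4}/\partial L_{2}\to-1/(K+1)$ as $\varepsilon\to 0^{+}$.

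Differentiating the right-hand side of \eqref{prop1} with respect to $L_{2}$ produces $(K/\varepsilon)\bigl((\partial L_{4}/\partial L_{2})(\partial\theta_{6}/\partial L_{2})+L_{4}(\partial^{2}\theta_{6}/\partial L_{2}^{2})\bigr)$; the second summand is $O(1)$ since $L_{4}=O(\varepsilon)$, while the first contributes $K/((K+1)\varepsilon)+O(1)$ by the limits above. Differentiating \eqref{prop2}, the first summand of $\partial\theta_{3}/\partial L_{2}$ is bounded with bounded $L_{2}$-derivative, and the second summand $-(K(1+\varepsilon)/\varepsilon)(L_{2}+L_{3})e^{-L_{2}-L_{3}}$ differentiates to $-(K(1+\varepsilon)/\varepsilon)(1-L_{2}-L_{3})e^{-L_{2}-L_{3}}=-K/\varepsilon+O(1)$. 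Finally, differentiating \eqref{prop3} involves only $\theta_{6}$ and its first two $L_{2}$-derivatives, all of which remain $O(1)$, so the third summand's second derivative is bounded.

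Summing the three contributions yields
\[
\frac{\partial^{2}F}{\partial L_{2}^{2}}=\frac{K}{(K+1)\varepsilon}-\frac{K}{\varepsilon}+O(1)=-\frac{K^{2}}{(K+1)\varepsilon}+O(1)\quad\text{as }\varepsilon\to 0^{+},
\]
from which the asymptotic equivalence \eqref{2nd der of F wr L2} follows at once. The continuity of $\partial^{2}F/\partial L_{2}^{2}$ on $U$ is manifest, since each ingredient is a $C^{\infty}$ composition of elementary functions (exponentials, logarithms, positive-argument square roots) on $U$.

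The main obstacle is purely computational bookkeeping: one must decide correctly which product of an $O(\varepsilon)$ factor with an $O(1/\varepsilon)$ factor survives at leading order and which cancels down to $O(1)$. In particular, the precise coefficients $+K/(K+1)$ and $-K$ hinge on the leading-order limits of $\partial L_{4}/\partial L_{2}$ and on the expansion $L_{3}=\varepsilon/(K-1)+O(\varepsilon^{2})$ derived from \eqref{sorfejtesln}; tracking these asymptotics carefully is essential to obtain the precise coefficient $-K^{2}/(K+1)$ in \eqref{2nd der of F wr L2}.
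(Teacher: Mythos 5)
Your proposal is correct and follows essentially the same route as the paper: differentiate the three summands of $\partial F/\partial L_{2}$ from Claims 1--3, use $\partial^{2}\theta_{6}/\partial L_{2}^{2}=-\partial\theta_{6}/\partial L_{2}$, $\partial L_{4}/\partial L_{2}=(\partial\theta_{6}/\partial L_{2})/(K+\theta_{6})$ and the order estimates to isolate the two $1/\varepsilon$ contributions $K/((K+1)\varepsilon)$ and $-K/\varepsilon$, whose sum gives $-K^{2}/((K+1)\varepsilon)$. (A minor remark: the refined expansion $L_{3}=\varepsilon/(K-1)+O(\varepsilon^{2})$ is not actually needed here; $L_{3}=O(\varepsilon)$ suffices for the second derivative.)
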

\begin{proof}
\textit{\emph{We explicitly calculate the second partial derivatives
of the three main terms of $F$ with respect to $L_{2}$. Meanwhile
we }}verify that 

\emph{(i)}
\[
\frac{\partial^{2}}{\partial L_{2}^{2}}\left(\frac{K(K+1)}{\varepsilon}\left(1-(1-L_{4})e^{L_{4}}\right)\right)\sim\frac{K}{(K+1)\varepsilon}
\]

\emph{(ii)} and 
\[
\frac{\partial^{2}}{\partial L_{2}^{2}}\theta_{3}\sim-\frac{K}{\varepsilon}\mbox{ as }\varepsilon\rightarrow0^{+}.
\]

\emph{(iii)} In addition, we show that 
\[
\frac{\partial^{2}}{\partial L_{2}^{2}}\left((1+\varepsilon)\frac{K+1}{K-1}e^{L_{4}-L_{2}}\right)
\]
is bounded for small positive $\varepsilon$. 

Let us again use the symbol $'$ for differentiation with respect
to $L_{2}$. 

\emph{(i)} It is clear from (\ref{biz14}) that $\theta_{6}''=-\theta_{6}'$.
Also recall from (\ref{biz13}) that $L_{4}'=\theta_{6}'/(K+\theta_{6})$.
Therefore, by formula (\ref{prop1}), we get that 
\begin{align*}
\left(\frac{K(K+1)}{\varepsilon}\left(1-(1-L_{4})e^{L_{4}}\right)\right)'' & =\left(\frac{K}{\varepsilon}L_{4}\theta_{6}'\right)'=\frac{K}{\varepsilon}\theta_{6}'\left(\frac{\theta_{6}'}{K+\theta_{6}}-L_{4}\right)
\end{align*}
which is continuous on $U$. Since $\theta_{6}'=-1+O(\varepsilon)$,
$L_{4}=O(\varepsilon)$ and $(K+\theta_{6})^{-1}=(K+1)^{-1}+O(\varepsilon)$
as $\varepsilon\rightarrow0^{+}$, statement (i) follows.

\emph{(ii)} The second derivative of $\theta_{3}$ with respect to
$L_{2}$ can be calculated from \eqref{prop2}. It is also continuous
on $U$: 
\begin{align*}
\theta_{3}'' & =\frac{3(1+\varepsilon)}{2\sqrt{K-1-\varepsilon}}e^{-L_{2}-L_{3}-\frac{1}{2}}\left(\frac{\theta_{6}'}{2\sqrt{K+\theta_{6}}}-\sqrt{K+\theta_{6}}\right)\\
 & \quad-\frac{K}{\varepsilon}(1+\varepsilon)\left(1-L_{2}-L_{3}\right)e^{-L_{2}-L_{3}}.
\end{align*}
Using the same series expansions as before, we can easily see that
statement (ii) is true. 

\emph{(iii)} One can calculate from \eqref{prop3} and from $\theta_{6}''=-\theta_{6}'$
that the second derivative of third term of $F$ with respect to $L_{2}$
is the next continuous function: 
\begin{align*}
\left((1+\varepsilon)\frac{K+1}{K-1}e^{L_{4}-L_{2}}\right)'' & =\frac{1+\varepsilon}{K-1}\left(K+\theta_{6}-3\theta'_{6}\right)e^{-L_{2}}.
\end{align*}
Since $\theta_{6}\rightarrow1$, $\theta_{6}'\rightarrow-1$ and $L_{2}\rightarrow0$
as $\varepsilon\rightarrow0^{+}$, this expression is bounded for
small $\varepsilon>0$.

The continuity of $\partial^{2}F/\partial L_{2}^{2}$ and \eqref{2nd der of F wr L2}
comes from above and from the definition of $F$.
\end{proof}

~

\textbf{Acknowledgments.} 

This research was supported by the Ministry of Human Capacities,
Hungary grant 20391-3/2018/FEKUSTRAT. This research was supported by the EU-funded Hungarian grant EFOP-3.6.2-16-2017-00015. Gabriella Vas was also supported by the
National Research, Development and Innovation Office of Hungary, Grant No. SNN125119 and K129322.

\section*{References}
\bibliographystyle{elsarticle-num} 
\bibliography{mybibfile}





\end{document}